\newcommand{\N}{\mathbb{N}}
\newcommand{\Q}{\mathbb{Q}}
\newcommand{\R}{\mathbb{R}}
\newcommand{\sfd}{{\sf d}}
\renewcommand{\d}{{\mathrm d}}
\newcommand{\restr}[1]{\lower3pt\hbox{$|_{#1}$}}
\newcommand{\limi}{\varliminf}
\newcommand{\lims}{\varlimsup}
\newcommand{\X}{{\rm X}}
\newcommand{\lip}{{\rm lip}}
\newcommand{\Lip}{{\rm Lip}}
\newcommand{\loc}{{\rm loc}}
\newcommand{\mm}{\mathfrak m}
\renewcommand{\SS}{{\rm SS}}
\newcommand{\mytag}[2]{%
  \text{#1}%
  \@bsphack
  \begingroup
    \@onelevel@sanitize\@currentlabelname
    \edef\@currentlabelname{%
      \expandafter\strip@period\@currentlabelname\relax.\relax\@@@%
    }%
    \protected@write\@auxout{}{%
      \string\newlabel{#2}{%
        {\color{black}#1}%
        {\thepage}%
        {\@currentlabelname}%
        {\@currentHref}{}%
      }%
    }%
  \endgroup
  \@esphack
}
\def\Xint#1{\mathchoice
{\XXint\displaystyle\textstyle{#1}}%
{\XXint\textstyle\scriptstyle{#1}}%
{\XXint\scriptstyle\scriptscriptstyle{#1}}%
{\XXint\scriptscriptstyle\scriptscriptstyle{#1}}%
\!\int}
\def\XXint#1#2#3{{\setbox0=\hbox{$#1{#2#3}{\int}$ }
\vcenter{\hbox{$#2#3$ }}\kern-.6\wd0}}
\def\dashint{\Xint-}
\newtheorem{theorem}{Theorem}[section]
\newtheorem{corollary}[theorem]{Corollary}
\newtheorem{lemma}[theorem]{Lemma}
\newtheorem{proposition}[theorem]{Proposition}
\theoremstyle{definition}
\newtheorem{definition}[theorem]{Definition}
\newtheorem{example}[theorem]{Example}
\newcounter{Counter}
\newtheorem{remark}[theorem]{Remark}
\newcommand{\Int}{\textup{int}}
\newcommand{\codH}[1]{\mathcal{H}^{{\rm cod-}{#1}}}
\title{Poincar\'{e} inequality and energy of separating sets}
\author[Emanuele Caputo]{Emanuele Caputo}\address[Emanuele Caputo]{Mathematics Institute, Zeeman Building, University of Warwick, Coventry, CV4 7AL, United Kingdom}\email{emanuele.e.caputo@jyu.fi}
\author[Nicola Cavallucci]{Nicola Cavallucci}\address[Nicola Cavallucci]{
Institute of Mathematics, EPFL, Station 8, 1015 Lausanne, Switzerland}\email{n.cavallucci23@gmail.com}
\keywords{Poincar\'{e} inequality, Separating sets, Perimeter, Minkowski content, metric measure spaces}
\subjclass[2020]{30L15, 53C23, 49J52}
\begin{document}

\maketitle

\begin{abstract}
    We study geometric characterizations of the Poincar\'{e} inequality in doubling metric measure spaces in terms of properties of separating sets.
    Given a couple of points and a set separating them, such properties are formulated in terms of several possible notions of energy of the boundary, involving for instance the perimeter, codimension type Hausdorff measures, capacity, Minkowski content and approximate modulus of suitable families of curves. We prove the equivalence within each of these conditions and the $1$-Poincaré inequality.
\end{abstract}

\tableofcontents

\section{Introduction}
Since the work of Heinonen-Koskela \cite{HeiKos98} it has been clear the importance of the class of doubling metric measure spaces satisfying a Poincar\'{e} inequality, called for short PI-spaces. On such spaces many analytical properties that were classically studied for Euclidean spaces or Riemannian manifolds hold (see for instance the monographs \cite{BjornBjorn2011} and \cite{HK00}). However, despite its definition being analytical, it was clear from the beginning of the theory that a Poincaré inequality has strong geometric consequences. One of the most notable result in such a direction is Cheeger's theorem (\cite{Cheeger99}) saying that PI spaces are Lipschitz differentiability spaces. Moreover, as noticed by Semmes, every such space must be $L$-quasiconvex, namely there exists a constant $L \ge 1$, such that each couple of points can be connected by a $L$-quasigeodesic, i.e.\ a curve whose length is bounded by $L$ times the distance of the points (see for instance \cite[Thm.\ 17.1]{Cheeger99}). The relation between the validity of a Poincaré inequality and the presence of `many' curves in the space was highlighted in \cite{Kei03} by Keith. He proved that, given a doubling metric measure space $(\X,\sfd,\mm)$, a $p$-Poincaré inequality is equivalent to a uniform lower bound on the $p$-modulus of curves joining two points $x,y \in \X$, but computed with respect to the weighted measure $\mm_{x,y}^L = R_{x,y}^L \mm$, where
\begin{equation*}
    R_{x,y}^L(z):= \chi_{B_{x,y}^L}(z) \left( \frac{\sfd(x,z)}{\mm(B_{\sfd(x,z)}(x))} + \frac{\sfd(y,z)}{\mm(B_{\sfd(y,z)}(y))}\right).
\end{equation*}
Here $B_{x,y}^L := B_{2L\sfd(x,y)}(x)\cup B_{2L\sfd(x,y)}(y)$. We call $R_{x,y}^L$ the \emph{$L$-truncated Riesz potential with poles at $x,y$}.
The Riesz potential, together with the Hardy-Littlewood maximal function, has been a central tool in the theory of Poincaré inequalities from the beginning (\cite{Hei01}, \cite{HK00}).\\
Semmes conjectured that the $1$-Poincar\'{e} inequality is equivalent to having the existence of a $1$-pencil of curves for every couple of points, that is a measure on quasigeodesics connecting them with bounded overlap controlled in terms of the Riesz potential (see Proposition \ref{prop:Poincaré_equivalences_pencil} for the precise definition).
The conjecture is true as proven independently by \cite{FasslerOrponen19} and \cite{DurCarErikBiqueKorteShanmu21}. The proof of the latter one can be extended for $p>1$ as we notice in \cite{CaputoCavallucci2024}.  There we also clarify the roles of the different constants in the definitions.\\
Another remarkable condition that encodes the Poincaré inequality for $p>1$ in geometric terms is defined through an obstacle avoiding condition. The condition is informally saying that for each couple of points and for each Borel set that is not too dense around these two points (in a sense quantified with the maximal function) there exists a quasigeodesic connecting the two points that does not spend too much time inside the set, see \cite{ErikssonBique2019} and \cite{Sylvester-Gong-21}. The same condition for $p=1$ is studied in detail in \cite{CaputoCavallucci2024}, where we clarify its relation with the $1$-Poincaré inequality.
\vspace{2mm}

All the conditions above turned out to be useful in order to prove that relevant class of spaces appearing in metric measure geometry and analysis on fractals satisfy or do not satisfy a Poincar\'{e} inequality. All the proofs of these facts use a characterization which is relevant for the geometry under study.
For instance, using the obstacle-avoidance approach \cite{Sylvester-Gong-21} studied the Poincar\'{e} inequality on non-self similar Sierpinski carpets and more general fractals (see \cite{MacKayTysonWildrick} for previous results in this direction). 
Let us comment with an application of the $1$-pencil of curves. Ohta \cite{Ohta07} and Sturm \cite{SturmII} introduced a notion of metric measure spaces satisfying a weak lower bound on the Ricci curvature by $K$ and an upper bound on the dimension by $N$ in a synthetic sense, the so called ${\rm MCP}(K,N)$ spaces. A straightforward consequence of the definition of ${\rm MCP}(0,N)$ spaces implies the existence of $1$-pencils of curves and so a $1$-Poincaré inequality, under the additional assumption of $N$-Ahlfors regularity.\\

The goal of our paper is to go in this direction and provide a new characterizations of the Poincar\'{e} inequality in geometric terms.
Let us notice that all the previous characterizations are defined in terms of properties of curves. The novelty of our approach is that our equivalent conditions are not formulated using curves but using properties related to separating sets. Let us introduce such a notion.
\begin{definition}
\label{def:separating_sets}
    Let $(\X,\sfd,\mm)$ be a metric measure space and let $x,y\in \X$. A closed set $\Omega$ is a separating set from $x$ to $y$ if there exists $r>0$ such that $B_r(x) \subseteq \Int(\Omega)$ and $B_r(y) \subseteq \Omega^c$. We denote by $\SS_{\textup{top}}(x,y)$ the class of all separating sets from $x$ to $y$.
\end{definition}
Here the subscript `${\rm top}$' stands for topological, in contrast with other notions of separating sets studied in \cite{CaputoCavallucci2024}.
We state the main result of the paper. It shows that for $p=1$ the Poincaré inequality is equivalent to have a uniform lower bound on several energies of boundaries of separating sets. These energies are all defined through the Riesz potential.
\begin{theorem}
\label{theo:main-intro-p=1}
Let $(\X,\sfd,\mm)$ be a doubling metric measure space. Then the following conditions are quantitatively equivalent:
\begin{itemize}
    \item[(PI)] $1$-Poincar\'{e} inequality;
    \item[(BP)] $\exists c>0, L\geq 1$ such that $\int R_{x,y}^L\,\d {\rm Per}_\mm(\Omega, \cdot) \ge c$ for every $x, y \in \X$ and $\Omega \in \SS_{\textup{top}}(x,y)$;
    \item[(BC)] $\exists c>0, L\geq 1$ such that $\textup{cap}_{\mm_{x,y}^L}(\Omega) \ge c$ for every $x, y \in \X$ and   $\Omega \in \SS_{\textup{top}}(x,y)$;
    \item[(BH)] $\exists c>0, L\geq 1$ such that $\int_{\partial \Omega} R_{x,y}^L\,\d \codH{1}_\mm \ge c$ for every $x, y \in \X$ and  $\Omega \in \SS_{\textup{top}}(x,y)$;
    \item[(BMC)] $\exists c>0, L\geq 1$ such that $(\mm_{x,y}^L)^{+,1}(\Omega) \ge c$ for every $x, y \in \X$ and  $\Omega \in \SS_{\textup{top}}(x,y)$;
    \item[(BAM)] $\exists c>0, L\geq 1$ such that ${\rm AM}_1(\partial \Omega, \mm_{x,y}^L) \ge c$ for every $x, y \in \X$ and  $\Omega \in \SS_{\textup{top}}(x,y)$;
    \item[(BAM$^\pitchfork$)] $\exists c>0, L\geq 1$ such that ${\rm AM}_1^\pitchfork(\Omega, \mm_{x,y}^L) \ge c$ for every $x, y \in \X$ and $\Omega \in \SS_{\textup{top}}(x,y)$.
\end{itemize}
\end{theorem}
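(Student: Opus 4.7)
The overall strategy is to complete a cycle of implications pivoting on the known equivalence between (PI) and the existence of a $1$-pencil of quasigeodesics (Proposition~\ref{prop:Poincaré_equivalences_pencil}). The plan is to show that (PI) implies the strongest of the boundary conditions directly via the pencil, to reduce the other boundary conditions to this one via standard comparisons among the boundary energies, and finally to close the cycle by deducing (PI) from the weakest boundary condition via a coarea-type argument.

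\textbf{Block 1 (comparisons among boundary quantities).} In any doubling metric measure space one has the measure inequalities
$$
\mathrm{Per}_\mm(\Omega,\cdot)\;\le\;C\,\codH{1}_\mm\llcorner\partial\Omega\;\le\;C\,\mm^{+,1}(\Omega,\cdot),
$$
with $C$ depending only on the doubling constant. These survive multiplication by the Riesz potential $R_{x,y}^L$ and immediately give (BP) $\Rightarrow$ (BH) $\Rightarrow$ (BMC). Standard $1$-capacity/modulus duality gives (BC) $\Leftrightarrow$ (BAM), and a slicing of the $\varepsilon$-tubular neighbourhood of $\partial\Omega$ along the level sets of $\sfd(\cdot,\partial\Omega)$ produces (BAM$^\pitchfork$) $\Rightarrow$ (BMC), each transversal admissible curve sweeping across all such level sets.

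\textbf{Block 2 (bridge with (PI)).} Assuming (PI), Proposition~\ref{prop:Poincaré_equivalences_pencil} supplies a $1$-pencil $\sigma_{x,y}$ of quasigeodesics from $x$ to $y$ whose pointwise multiplicity is controlled by $R_{x,y}^L$. Since $\Omega\in\SS_{\textup{top}}(x,y)$ topologically separates $x$ from $y$, every curve in $\supp\sigma_{x,y}$ crosses $\partial\Omega$ transversally, so an appropriately normalised $\sigma_{x,y}$ is admissible for $\mathrm{AM}_1^\pitchfork(\Omega,\mm_{x,y}^L)$, which gives (BAM$^\pitchfork$) and, by Block 1, all the remaining boundary conditions. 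To close the cycle I would deduce (PI) from the weakest condition (BMC) by the metric coarea inequality
$$
\int \lip f\,\d\mm\;\ge\;\int_\R \mm^{+,1}(\{f>t\},\X)\,\d t
$$
applied to a Lipschitz $f$: after a mild regularisation, the level sets $\{f>t\}$ lie in $\SS_{\textup{top}}(x,y)$ for a positive-measure set of parameters $t\in[f(y),f(x)]$, so (BMC) yields $(\mm_{x,y}^L)^{+,1}(\{f>t\})\ge c$; integrating in $t$ and averaging over $(x,y)$ in a ball $B$ produces a Riesz-potential form of the Poincar\'e inequality, which is converted into the classical $1$-Poincar\'e inequality via the Hardy--Littlewood maximal estimate on $R_{x,y}^L$.

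\textbf{Main obstacle.} The hardest step is the final one, (BMC) $\Rightarrow$ (PI): the coarea inequality with Minkowski content in place of perimeter is subtle in general metric spaces, and guaranteeing that the level sets of the Lipschitz function are genuinely topologically separating for the right couple $(x,y)$ requires both a regularisation procedure and a careful selection of test pairs. Controlling the constant $L$ through the quasiconvexity of the space and through the dilation parameters of the Riesz potential is a delicate bookkeeping exercise that follows the line developed in \cite{CaputoCavallucci2024}.
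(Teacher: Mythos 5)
There is a structural gap: your cycle does not close for (BP), (BC), (BAM) (and (BH)). Your Block 1 inequalities, ${\rm Per}_\mm(\Omega,\cdot)\le C\,\codH{1}_\mm\restr{\partial\Omega}\le C\,\mm^{+,1}$, only let a \emph{lower} bound propagate from the perimeter towards the Hausdorff/Minkowski side, so they give (BP) $\Rightarrow$ (BH) $\Rightarrow$ (BMC); together with (PI) $\Rightarrow$ (BAM$^\pitchfork$) $\Rightarrow$ (BMC) $\Rightarrow$ (PI) you have proved only that (PI), (BAM$^\pitchfork$) and (BMC) are equivalent, while nothing in the proposal ever derives (BP), (BC) or (BAM) from (PI) or from any other node. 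To get (PI) $\Rightarrow$ (BP) one needs a lower bound on the perimeter of an arbitrary separating set, i.e.\ the \emph{reverse} inequality ${\rm Per}_\mm(\Omega,\cdot)\ge c_1\,\codH{1}_\mm\restr{\partial^e\Omega}$; this is Ambrosio's representation formula (Proposition \ref{prop:representation_perimeter_ambrosio}), whose density lower bound $\theta\ge c_1$ holds only under the $1$-Poincaré inequality, not under doubling alone, and it is an essential ingredient you are missing. Likewise your only link for the capacity, the claimed ``standard $1$-capacity/modulus duality (BC) $\Leftrightarrow$ (BAM)'', is not a real tool here: the classical capacity--modulus duality concerns the family of curves \emph{connecting} $\Omega$ to the complement of a neighbourhood, not the family $\Gamma(\partial\Omega)$ of curves merely hitting $\partial\Omega$, and for $p=1$ the identification of ${\rm Mod}_1$ with ${\rm AM}_1$ fails in general. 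The paper instead reaches (BC) via ${\rm Per}\le{\rm cap}^1\le$ Minkowski content (Lemma \ref{lemma:perimeter<capacity<minkowski}) applied in $(\X,\sfd,\mm_{x,y}^L)$, starting from a lower bound on ${\rm Per}_{\mm_{x,y}^L}(\Omega)$, and closes through (BMC) $\Rightarrow$ (PI).

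A second, smaller omission: several of your steps silently identify quantities computed in the weighted space $(\X,\sfd,\mm_{x,y}^L)$ with the corresponding unweighted quantities multiplied by $R_{x,y}^L$ (e.g.\ passing between $\int_{\partial\Omega}R_{x,y}^L\,\d\codH{1}_\mm$, $\codH{1}_{\mm_{x,y}^L}(\partial\Omega)$ and $(\mm_{x,y}^L)^{+,1}$). This ``survives multiplication by the Riesz potential'' claim is exactly the content of Section \ref{sec:relations_weighted_with_riesz} of the paper (Corollaries \ref{cor:perimeter_comparison_Poincaré} and \ref{cor:codH_comparison_Poincaré}), which requires the continuity of $R_{x,y}$ after a bi-Lipschitz change to a geodesic metric; it is not automatic and needs to be argued. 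By contrast, your (PI) $\Rightarrow$ (BAM$^\pitchfork$) via the pencil and your (BMC) $\Rightarrow$ (PI) via the Minkowski coarea inequality are essentially the paper's arguments (note that the averaging/maximal-function step you worry about is unnecessary: the pointwise inequality ($1$-PtPI) of Proposition \ref{prop:Poincaré_equivalences_pencil} already gives back (PI), and the level sets $\{u\ge t\}$ for $u(x)<t<u(y)$ are automatically in $\SS_{\textup{top}}(x,y)$ by continuity of $u$, with no regularisation needed).
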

We refer the reader to Section \ref{sec:several_energies} for the definition of all the quantities involved in the statement. The expert reader can recognize the perimeter, the variational capacity, the Minkowski content, the codimension-$1$ Hausdorff measure and the $1$-approximate modulus. 
The acronyms of each line should be read accordingly: the letter B stands for Big, therefore for instance (BP) means `Big Perimeter', while (BH) means `Big codimension-$1$ Hausdorff measure'. Similarly for the other quantities.
We refer the reader to Theorem \ref{theo:main-intro-p=1-riproposed} where additional equivalent conditions are presented. In particular we stress the fact that all the conditions above that involve the topological boundary of a separating set can be checked on its essential boundary. Moreover we defined other equivalent conditions by defining the perimeter and the codimension-$1$ Hausdorff measure directly on the metric measure space $(\X,\sfd,\mm_{x,y}^L)$.
\\

Let us give an intuition about what these conditions in the main statement mean. Let us start with (BH) and analyze the special case of $(\X,\sfd,\mm)$ being $s$-Ahlfors regular, with $s \ge 1$. In this case, the condition (BH) implies that for every $x,y \in \X$ and every $\Omega \in {\rm SS}_{\rm top}(x,y)$, $\mathcal{H}^{s-1}(\partial \Omega) >0$. Thus in particular the Hausdorff dimension of $\partial \Omega$ is at least $s-1$. This says that the Poincar\'{e} inequality implies that the boundary of separating sets are big in terms of Hausdorff dimension.
Actually, the (BH) says more than what just stated and quantifies how big the $(s-1)$ Hausdorff measure of $\partial \Omega$ is, taking into account the distance from the poles.
This can be better understood if one thinks about separating sets in the Euclidean case $\mathbb{R}^d$ endowed with Lebesgue measure, which can be regarded as the simplest example of $1$-PI space. Here, considering two points $x,y \in \mathbb{R}^d$, sufficiently small balls around $x$ are all separating sets from $x$ to $y$ and, while the $\mathcal{H}^{d-1}$ measure of the spheres converges to $0$, the weighted Hausdorff measure with the Riesz potential $R_{x,y}^L\,\mathcal{H}^{d-1}$ is uniformly bounded from below.\\

Let us discuss the condition (BMC) and its relation with the main proof in \cite{FasslerOrponen19}. In order to prove that $1$-PI spaces supports a $1$-pencil of curves for each couple of points $x,y\in \X$, the authors argue as follows: they discretize the metric measure space by considering a $\delta$-net for $\delta$ small and they build a graph connecting close points in the $\delta$-net. Then considering all possible cuts in the graph, namely a subset of vertices containing $x$ and not $y$, they prove that a suitable energy of the cut weighted with the Riesz potential is uniformly bounded from below. The bound from below is independent of $x,y,\delta$ and can be regarded as a discretized version of our characterizations, for instance the closest one being (BMC). One of the goals of \cite{CaputoCavallucci2024} is to investigate the relation between boundary of cuts in the appoximating graphs and boundary of separating sets in the continuous version.\\

The conditions (BAM) and (BAM$^\pitchfork$), since they are formulated in terms of modulus of curves, share some similarities to Keith's characterization of $p$-PI space in \cite{Kei03}. First of all we prove along the way, see Remark \ref{rem:keith_similarity}, that $\X$ is a $1$-PI space if and only if for every $x,y \in \X$ we have ${\rm AM}_1(\Gamma_{x,y}^L,\mm_{x,y}^L) \ge c$ for some $c>0$ and $L \ge 1$. This is a strengthening of Keith's characterization for $p =1$. On the other hand (BAM) and (BAM$^\pitchfork)$ involves the class of curves hitting the boundary of a separating set, which is a much larger family of curves. However the validity of this condition for every separating set, which can be equivalently stated as a property of arbitrarily short curves (see Lemma \ref{lemma:AM_localized_near_boundary}), is enough to get a uniform lower bound on ${\rm AM}_1(\Gamma_{x,y}^L,\mm_{x,y}^L)$.\\ 

There are two natural research directions starting from our result. The first one is the case $p>1$. We prove in Proposition \ref{prop:PI_implies_pBH} that the natural $p$-generalization of (BH) is satisfied by a doubling metric measure space with a $p$-Poincar\'{e} inequality. Example \ref{example:sierpinski_p>1} shows that this version of (BH) does not imply any Poincar\'{e} inequality. It would be interesting to find additional assumptions on the metric measure space so that the condition appearing in Proposition \ref{prop:PI_implies_pBH} characterizes $p$-PI spaces.
The second direction is whether it is possible to test each of the properties appearing in Theorem \ref{theo:main-intro-p=1} on a subclass of more regular separating sets. The regularity is such sets can be expressed as sets with Lipschitz (or even smooth) boundaries in the case of Riemannian manifolds or in terms of metric measure regularity conditions, such as the interior (or exterior) ball condition and similar properties (see the discussion in \cite[§4.2]{CaputoRossi}). 

\subsection{Structure of the paper}
The paper is structured as follows. In Section \ref{sec:preliminaries} we recall definitions and properties we will need in the presentation. We also study the measure $\mm_{x,y}^L$ and its known relations with the Poincaré inequality. In Section \ref{sec:several_energies} we introduce the several energies appearing in Theorem \ref{theo:main-intro-p=1} and we prove some basic relations between them holding in full generality. In Sections \ref{sec:relations_under_PI} and \ref{sec:relations_weighted_with_riesz} we specialize the study of the relations between our different energies in case of spaces satisfying a Poincaré inequality. These sections are quite technical, especially Section \ref{sec:relations_weighted_with_riesz} in which we compare two possible ways to weight the perimeter measure and the codimension Hausdorff measure with the Riesz potential. Finally in Section \ref{sec:main_theorem} we prove all the equivalences stated in Theorem \ref{theo:main-intro-p=1}. The core of the arguments are contained in the proof of the main theorem. In particular the reader can skip the proofs of Section \ref{sec:relations_weighted_with_riesz} during a first read.

\subsection{Acknowledgments}

The first author was supported by Academy of Finland, grants no. 321896. He currently acknowledges the support by the European Union’s Horizon 2020 research and innovation programme (Grant
agreement No. 948021).
He also thanks the kind hospitality of the Univerisity of Ottawa and of Augusto Gerolin. We thank Francesco Nobili for a careful reading of the manuscript.

\section{Preliminaries}
\label{sec:preliminaries}
Let $(\X,\sfd,\mm)$ be a metric measure space, i.e.\ $(\X,\sfd)$ is a complete and separable metric space and $\mm$ is nonnegative Borel measure that is finite on bounded sets. Given $A\subseteq \X$ and $r\geq 0$ we denote by $B_r(A):=\{ x:\, \sfd(x,A) < r\}$ and by $\overline{B}_r(A):=\{ x:\, \sfd(x,A) \le r\}$, i.e.\ respectively the open and closed $r$-neighbourhood of $A$. For instance the open ball of center $x \in \X$ and radius $r$ is $B_r(x)$. We denote by $\mathscr{B}(\X)$ the class of Borel subsets of $\X$.\\
We denote by ${\rm Lip}(\X)$ the space of Lipschitz functions on $\X$ with values in $\R$. For $u\in {\rm Lip(\X)}$ we denote by $\Lip(u)$ the Lipschitz constant of $u$. 
The \emph{local Lipschitz constant} $\lip u \colon \X \to \mathbb{R}$ of a function $u\colon \X \to \R$ as
$$\lip u (x):=\lims_{y \to x} \frac{|u(y)-u(x)|}{\sfd(y,x)} = \lim_{\delta \to 0} \sup_{B_\delta(x)} \frac{|u(y)-u(x)|}{\sfd(y,x)} $$ 
with the convention that $\lip u (x)=0$ if $x$ is an isolated point. The function ${\rm sl}(x,\delta) := \sup_{B_\delta(x)} \frac{|u(y)-u(x)|}{\sfd(y,x)}$ is called the slope of $u$ at $x$ at scale $\delta$.
\vspace{2mm}

\noindent
Given a set $A\subseteq \X$, we define by ${\rm int}(A)$ its topological interior and ${\rm ext}(A):={\rm int}(A^c)$.
Given a $\mm$-measurable set $A\subseteq \X$ we define its {\em measure theoretic interior} $I_A$ and its {\em measure theoretic exterior} $O_A$ as follows:
$$I_A := \left\lbrace x\in \X : \lim_{r\to 0} \frac{\mm(B_r(x)\cap A)}{\mm(B_r(x))} = 1 \right\rbrace,\quad O_A := \left\lbrace x\in \X : \lim_{r\to 0} \frac{\mm(B_r(x)\setminus A)}{\mm(B_r(x))} = 1 \right\rbrace.$$
The set $\X \setminus (I_A \cup O_A)$ is the {\em essential boundary} of $A$, denoted by $\partial^e A$. Observe that
$$\partial^e A = \left\{ x\in \X : \lims_{r\to 0}\frac{\mm(B_r(x)\cap A)}{\mm(B_r(x))} > 0 \text{ and } \lims_{r\to 0}\frac{\mm(B_r(x)\setminus A)}{\mm(B_r(x))} > 0  \right\}.$$
It is straightforward consequence of the definitions that $I_A\supseteq {\rm int}(A)$ and $O_A \supseteq {\rm ext}(A)$, so $\partial^e A \subseteq \partial A$.

\subsection{Curves in metric spaces}

Let $(\X,\sfd)$ be a metric space. The space of continuous curves $\gamma \colon [0,1]\to \X$ is denoted by $C([0,1],\X)$. The length of a curve $\gamma \in C([0,1],\X)$ is
\begin{equation*}
    \ell(\gamma):= \sup \left\{ \sum_{i=0}^{N-1} \sfd(\gamma_{t_i}, \gamma_{t_{i+1}}), \,\{ t_i\}_{i=1}^N\subseteq [0,1],\,N \in \mathbb{N},\,
    0=t_0 < t_1 <\dots < t_N =1\right\}.
\end{equation*}
A curve $\gamma \in C([0,1],\X)$ is rectifiable if $\ell(\gamma)<\infty$. 
The metric speed of a curve $\gamma$ at a point $t\in [0,1]$ is  $|\dot{\gamma}_t|:=\lim_{h \to 0}\frac{\sfd(\gamma_{t+h},\gamma_{t})}{|h|} \in [0,\infty]$ if such a limit exists.
A curve $\gamma$ is absolutely continuous, and we write $\gamma \in {\rm AC}([0,1],\X)$, if there exists $0 \le h \in L^1(0,1)$ such that $\sfd(\gamma_1,\gamma_0) \le \int_0^1 h(t) \,\d t$. If $\gamma \in {\rm AC}([0,1],\X)$, $|\dot{\gamma}_t|$ exists and it is finite for a.e.\ $t \in [0,1]$ and $(t \mapsto |\dot{\gamma}_t|) \in L^1([0,1])$. Moreover, such a function is the minimal $h$ (in the a.e.\ sense) that can be chosen in the definition of ${\rm AC}([0,1],\X)$.\\
The integral of a Borel function $g \colon \X \to [0,\infty)$ over $\gamma \in {\rm AC}([0,1],\X)$, is defined as
\begin{equation*}
    \int_\gamma g\,\d s := \int_0^1 g(\gamma_t)\,|\dot{\gamma}_t|\,\d t.
\end{equation*}

\noindent Let us fix two points $x,y \in \X$ and $L \ge 1$. We set
\begin{equation*}
\Gamma_{x,y}^L:=\{ \gamma \in {\rm AC}([0,1],\X),\,\gamma_0=x,\,\gamma_1=y,\,\ell(\gamma)\le L\,\sfd(x,y) \}.
\end{equation*}
An element of $\Gamma_{x,y}^L$ is called a $L$-quasigeodesic joining $x$ to $y$. A curve $\gamma \in C([0,1],\X)$ is a geodesic if $\ell(\gamma)=\sfd(x,y)$, i.e.\ it is a $1$-quasigeodesic joining its endpoints.\\
The metric space $(\X,\sfd)$ is $L$-quasiconvex if $\Gamma_{x,y}^L \neq \emptyset$ for every $x,y \in\X$ and it is geodesic if it is $1$-quasiconvex.\\
Let $u\colon \X \to \R$ be a function. A function $g\colon \X \to [0,+\infty]$ such that
$$\vert u(\gamma_1) - u(\gamma_0)\vert \leq \int_\gamma g\,\d s$$
for every rectifiable curve $\gamma$ is called an \emph{upper gradient} of $u$. The set of upper gradients of $u$ is denoted by ${\rm UG}(u)$.

\subsection{Modulus and approximate modulus of a family of curves}

We recall the notion of $p$-modulus and $p$-approximate modulus of a family of curves with respect a Borel nonnegative locally finite measure $p$.
Let $(\X,\sfd,\mm)$ be a metric measure space, let $p \ge 1$ and let $\Gamma\subseteq C([0,1]),\X)$. The set of admissible densities for $\Gamma$ is 
$${\rm Adm}(\Gamma):=\left\{ \rho\colon \X \to [0,+\infty] : g\text{ Borel and }\int_\gamma \rho\,\d s \ge 1 \text{ for all } \gamma \in \Gamma\right\}.$$
The $p$-modulus of $\Gamma$ with respect to the measure $\mm$ is 
\begin{equation*}
   {\rm Mod}_p(\Gamma,\mm):=\inf_{\rho \in {\rm Adm}(\Gamma)} \int \rho^p\,\d \mm.
\end{equation*}

In this paper we will deal largely with the approximate modulus. In order to define it we introduce the set of admissible sequences of densities for $\Gamma$ as
$${\text{Adm-seq}}(\Gamma):=\left\{ \{\rho_j\}_j:\,\rho_j\colon \X \to [0,+\infty] \text{ Borel and }\limi_{j \to \infty}\int_\gamma \rho_j\,\d s \ge 1 \text{ for all } \gamma \in \Gamma\right\}.$$
The approximate $p$-modulus of $\Gamma$ with respect to the measure $\mm$ is 
\begin{equation*}
   {\rm AM}_p(\Gamma,\mm):=\inf_{\{\rho_j\} \in \text{Adm-seq}(\Gamma)} \limi_{j\to +\infty}\int \rho_j^p\,\d \mm
\end{equation*}

\noindent It is clear from the definition that ${\rm AM}_p(\Gamma,\mm) \leq {\rm Mod}_p(\Gamma,\mm)$ for every family $\Gamma$. Moreover for $p >1$, ${\rm AM}_p(\Gamma,\mm)={\rm Mod}_p(\Gamma,\mm)$ for every $\Gamma \subseteq C([0,1],\X)$ (cp. for instance \cite[Theorem 1]{honzlova2017modulus}). The equality is generally false for $p=1$. Both the modulus and the approximate modulus are outer measures on $C([0,1],\X)$ (cp. \cite[Theorem 15 and 16]{honzlova2017modulus}).

\subsection{The Poincaré inequality and the Riesz potential}
A metric measure space $(\X,\sfd,\mm)$ is doubling if there exists $C_D >0$ such that for every $x \in \X$ and $r >0$ we have
\begin{equation*}
\mm(B_{2r}(x)) \le C_D\, \mm(B_r(x)).
\end{equation*} 
A consequence of the definition of the doubling assumption is that $(\X,\sfd)$ is proper, i.e.\ closed and bounded sets are compact. A metric measure space $(\X,\sfd,\mm)$ is asymptotically doubling if 
$$\lims_{r\to 0} \frac{\mm(B_{2r}(x))}{\mm(B_r(x))} < \infty$$
for $\mm$-a.e.$x\in \X$. We say it is $C$-asymptotically doubling if 
$$\lims_{r\to 0} \frac{\mm(B_{2r}(x))}{\mm(B_r(x))} \leq C$$
for $\mm$-a.e.$x\in \X$.\\
We say that $(\X,\sfd,\mm)$ is $s$-Ahlfors regular with $s >0$ if there exists a constant $C_{\rm AR} >0$ such that for every $x \in \X$ and every $0 < r < {\rm diam}(\X)$ we have
\begin{equation}
    \label{eq:Ahlfors_regular}
    C_{\rm AR}^{-1}\, r^s \le \mm(B_r(x)) \le C_{\rm AR}\, r^s.
\end{equation}
A metric measure space $(\X,\sfd,\mm)$ satisfies a (weak) $p$-Poincar\'{e} inequality if there exists $C_{P} \ge 1$ and $\lambda \ge 1$ such that 
\begin{equation*}
    \dashint_{B_r(x)} \left|u - \dashint_{B_r(x)}u\,\d \mm\right|\,\d \mm \le C_P r\,\left( \dashint_{B_{\lambda r}(x)} (\lip u)^p\,\d \mm \right)^{\frac{1}{p}}
\end{equation*}
for every $u \in {\rm Lip}(\X)$.
%
%
A metric measure space $(\X,\sfd,\mm)$ is called a $p$-PI space if it is doubling and it satisfies a $p$-Poincaré inequality. The \emph{structural constants} of a $p$-PI spaces are $C_D,C_P,\lambda$.

\begin{remark}
\label{rem:bilipschitz_equivalence}
If $(\X,\sfd,\mm)$ is a $p$-PI space, then, as proved in \cite[Theorem 17.1]{Cheeger99} (see also \cite[Theorem 8.3.2]{HKST15}), the metric space $(\X,\sfd)$ is $L$-quasigeodesic for some $L \ge 1$ depending only on the structural constants. By considering $\sfd'(x,y):= \inf \{\, \ell(\gamma):\, \gamma_0 = x, \, \gamma_1 = y \,\}$, it is straightforward to check that
\begin{equation}
    \sfd(x,y) \le \sfd'(x,y) \le L \sfd(x,y),
\end{equation}
and that $(\X,\sfd')$ is geodesic. Moreover $(\X,\sfd',\mm)$ is again a $p$-PI space.
\end{remark}

The Poincaré inequality on a doubling metric measure space has several equivalent definitions (cp. \cite[Theorem 3.4]{CaputoCavallucci2024}). In order to state some of them we recall one of the most important objects of this paper: the Riesz potential. Let $(\X,\sfd,\mm)$ be a metric measure space. Given $x,y\in \X$, the \emph{Riesz potential with poles at $x$ and $y$} $R_{x,y} \colon \X \to [0,\infty)$ is defined for every $z \in \X\setminus \lbrace x,y \rbrace$ as
\begin{equation}
\begin{aligned}
    R_{x,y}(z):&= \frac{\sfd(x,z)}{\mm(B_{\sfd(x,z)}(x))} + \frac{\sfd(y,z)}{\mm(B_{\sfd(y,z)}(y))}=: R_x(z)+R_y(z).
\end{aligned}
\end{equation}
Moreover, we define $R_{x,y}(x) = R_{x,y}(y) = 0$. For $L\geq 1$ we set $B_{x,y}^L := B_{2L\sfd(x,y)}(x)\cup B_{2L\sfd(x,y)}(y)$ and $\overline{B}_{x,y}^L := \overline{B}_{2L\sfd(x,y)}(x)\cup \overline{B}_{2L\sfd(x,y)}(y)$. The $L$-\emph{truncated Riesz potential with poles at $x,y$} is
\begin{equation}
    R_{x,y}^L(z):= \chi_{B_{x,y}^L}(z) R_{x,y}(z)
\end{equation}
for every $z \in \X\setminus \lbrace x,y \rbrace$. The corresponding Riesz measure is defined as
\begin{equation}
    \mm_{x,y}^L = R^L_{x,y}\,\mm. 
\end{equation}
It is a measure on $\X$ which is supported on $\overline{B}_{x,y}^L$. This measure has been already studied for instance in \cite{Hei01} and \cite{Kei03}. We can now state two equivalent formulations of the Poincaré inequality on doubling metric measure spaces.
\begin{proposition}[{\cite[Theorem 9.5]{Hei01}}, {\cite[Theorem 3.4]{CaputoCavallucci2024}}]
\label{prop:Poincaré_equivalences_pencil}
    Let $(\X,\sfd,\mm)$ be a doubling metric measure space. The following are quantitatively equivalent:
    \begin{itemize}
        \item[($p$-PI)] $(\X,\sfd,\mm)$ satisfies a $p$-Poincar\'{e} inequality;
        \item[($p$-PtPI)] $\exists C > 0$, $L\geq 1$ such that for every $x,y \in \X$ and every $u\in \Lip(\X)$ it holds
        \begin{equation*}
        \label{eq:Riesz_PtPI}
            |u(x)-u(y)|^p\le C\, \sfd(x,y)^{p-1}\, \int_\X (\lip u)^p\,\d \mm_{x,y}^{L}.
        \end{equation*} 
        \item[($p$-pencil)] $\exists C > 0$, $L\geq 1$ such that for every $x,y \in \X$ there exists $\alpha\in \mathcal{P}(\Gamma_{x,y}^L)$ such that
        $$\left( \int \int_\gamma g \,\d s \d\alpha \right)^p \leq C\,\sfd(x,y)^{p-1} \int_\X g^p\,\d\mm_{x,y}^L$$
        for every nonnegative Borel function $g$.
    \end{itemize}
\end{proposition}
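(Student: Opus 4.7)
The plan is to prove a cycle of implications linking the three conditions. The two essentially formal directions are $(p\text{-pencil}) \Rightarrow (p\text{-PtPI})$ and $(p\text{-PtPI}) \Rightarrow (p\text{-PI})$; the remaining two directions $(p\text{-PI}) \Rightarrow (p\text{-PtPI})$ and $(p\text{-PtPI}) \Rightarrow (p\text{-pencil})$ require work, with the latter being the main obstacle.

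For $(p\text{-pencil}) \Rightarrow (p\text{-PtPI})$, fix $x,y \in \X$, pick $\alpha$ from the pencil condition, and take $u \in \Lip(\X)$. Since $\lip u$ is an upper gradient of $u$ along rectifiable curves, the inequality $|u(x) - u(y)| \le \int_\gamma \lip u\,\d s$ holds for every $\gamma \in \Gamma_{x,y}^L$, so integrating against $\alpha$ and applying the pencil estimate to $g = \lip u$ gives
\begin{equation*}
    |u(x) - u(y)|^p \le \Bigl(\int \int_\gamma \lip u\,\d s\,\d\alpha(\gamma)\Bigr)^p \le C\,\sfd(x,y)^{p-1}\int_\X (\lip u)^p\,\d\mm_{x,y}^L.
\end{equation*}
For $(p\text{-PtPI}) \Rightarrow (p\text{-PI})$, fix a ball $B = B_r(z)$ and estimate $\dashint_B |u - u_B|\,\d\mm \le \dashint_B \dashint_B |u(\xi) - u(\eta)|\,\d\mm(\xi)\,\d\mm(\eta)$, apply $(p\text{-PtPI})$ pointwise to the integrand, and use Hölder and Fubini. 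The resulting double integral $\dashint_B \dashint_B R_{\xi,\eta}^L(w)\,\d\mm(\xi)\,\d\mm(\eta)$ is controlled via the doubling condition by a constant multiple of $r/\mm(B)$ times $\chi_{B_{C r}(z)}(w)$, which yields the Poincaré inequality up to enlarging the dilation factor $\lambda$ by a multiple of $L$.

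The direction $(p\text{-PI}) \Rightarrow (p\text{-PtPI})$ is the classical telescopic chaining argument. Set $d := \sfd(x,y)$ and consider the concentric balls $B_k^x := B_{2^{-k}d}(x)$ and $B_k^y := B_{2^{-k}d}(y)$. Writing $u(x) - u(y)$ as a telescoping sum of differences $u_{B_{k+1}^{\bullet}} - u_{B_k^{\bullet}}$ plus the middle term $u_{B_0^x} - u_{B_0^y}$, each consecutive difference is estimated via $(p\text{-PI})$ on $B_k^x$ (respectively $B_k^y$) combined with doubling by a constant multiple of $2^{-k}d\cdot\bigl(\dashint_{B_{\lambda 2^{-k}d}(x)} (\lip u)^p\,\d\mm\bigr)^{1/p}$, and the middle term is handled by one additional application of $(p\text{-PI})$ on $B_{3d}(x)$, which contains $B_0^x \cup B_0^y$. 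After raising to the $p$-th power and summing the geometric series, the key observation is that
\begin{equation*}
    \sum_{k \ge 0} \chi_{B_{\lambda 2^{-k}d}(x)}(w)\,\frac{2^{-k}d}{\mm(B_{\lambda 2^{-k}d}(x))} \le C\,R_x(w)\,\chi_{B_{2Ld}(x)}(w)
\end{equation*}
for $L$ large enough in terms of $\lambda$ and $C_D$; the symmetric estimate around $y$ completes the argument and produces $(p\text{-PtPI})$.

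The main obstacle is $(p\text{-PtPI}) \Rightarrow (p\text{-pencil})$, where one has to manufacture a probability measure on $\Gamma_{x,y}^L$ out of a purely analytic pointwise inequality. The plan is a duality argument: fixing $x,y$, the existence of $\alpha$ with the required control against every nonnegative Borel $g$ is, via a Hahn--Banach / minimax argument on the convex cone of admissible curve measures, equivalent to a lower bound of order $\sfd(x,y)^{-(p-1)/p}$ on the (approximate) $p$-modulus of $\Gamma_{x,y}^L$ computed with respect to $\mm_{x,y}^L$. Such a modulus lower bound is in turn extracted from $(p\text{-PtPI})$ by testing the inequality against the potential $u(z) := \min\{1,\inf_\gamma \int_\gamma \rho\,\d s\}$ associated to any admissible $\rho$ for $\Gamma_{x,y}^L$: since $u(x) = 0$ and $u(y) \ge 1$ (up to harmless truncations), this forces $\int \rho^p\,\d\mm_{x,y}^L \ge C^{-1}/\sfd(x,y)^{p-1}$, as desired. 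The conversion of a modulus lower bound into an actual pencil, especially delicate for $p = 1$ where the approximate modulus must replace the modulus, is carried out by discretising $\X$ along a $\delta$-net, running a max-flow/min-cut argument on the resulting weighted graph, and passing to the limit $\delta \to 0$; this is the strategy of Fässler--Orponen and Durand-Cartagena--Eriksson-Bique--Korte--Shanmugalingam recalled in \cite[Theorem 3.4]{CaputoCavallucci2024}.
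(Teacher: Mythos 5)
The paper does not prove this proposition at all --- it is quoted from \cite{Hei01} and \cite{CaputoCavallucci2024} --- so the only thing to assess is whether your cycle of implications is sound. Two of your directions ((p-pencil) $\Rightarrow$ (p-PtPI) and (p-PtPI) $\Rightarrow$ (p-PI)) are fine, and deferring the modulus-to-pencil conversion in (p-PtPI) $\Rightarrow$ (p-pencil) to F\"assler--Orponen, DEKS and \cite{CaputoCavallucci2024} is consistent with what the paper itself does. But your proof of (p-PI) $\Rightarrow$ (p-PtPI) has a genuine gap: the ``key observation''
\begin{equation*}
    \sum_{k \ge 0} \chi_{B_{\lambda 2^{-k}d}(x)}(w)\,\frac{2^{-k}d}{\mm(B_{\lambda 2^{-k}d}(x))} \le C\,R_x(w)\,\chi_{B_{2Ld}(x)}(w)
\end{equation*}
is false for general doubling spaces, and no choice of $L$ repairs it. Take $\X=\R$ with Lebesgue measure (a doubling $1$-PI space): for $w$ close to $x$ every ball $B_{\lambda 2^{-k}d}(x)$ with $\lambda 2^{-k}d>|x-w|$ contains $w$, each such term equals $\tfrac{1}{2\lambda}$, so the left-hand side is comparable to $\log_2\frac{\lambda d}{|x-w|}$ and blows up as $w\to x$, while $R_x(w)\equiv\tfrac12$. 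The estimate only holds under a reverse-doubling (``dimension $>1$'') condition on $\mm$, which doubling does not give. Concentric-ball telescoping genuinely produces only the dyadic-sum potential $\sum_k 2^{-k}d\,(\dashint_{B_{\lambda 2^{-k}d}(x)}(\lip u)^p)^{1/p}$, which is strictly weaker than the $\mm_{x,y}^L$-integral form: on $\R$ one can test $g=\chi_{B_\varepsilon(x)}$ to see that no pointwise kernel comparison can convert the former into the latter. The standard fix, and the route of the results you and the paper cite, is to first use that a doubling space with a Poincar\'e inequality is $L$-quasiconvex (Remark \ref{rem:bilipschitz_equivalence}) and then chain along an $L$-quasigeodesic from $x$ to $y$, with balls centered on the curve whose radii are comparable to the distance to the nearer endpoint; the dilated balls then have bounded overlap, and for each ball containing $w$ the ratio (radius)/(measure) is comparable by doubling to $R_x(w)$ or $R_y(w)$, which is exactly what yields the Riesz kernel.

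A smaller point in (p-PtPI) $\Rightarrow$ (p-pencil): the test function $u(z)=\min\{1,\inf_\gamma\int_\gamma\rho\,\d s\}$ associated to an admissible $\rho$ need not be Lipschitz, while your (p-PtPI) is formulated for $u\in\Lip(\X)$ with $\lip u$ on the right; one needs a reduction (Vitali--Carath\'eodory to lower semicontinuous $\rho$, truncation to bounded $\rho$, and quasiconvexity to get Lipschitzness, or an upper-gradient version of the pointwise inequality) before the comparison of $\lip u$ with $\rho$ can be made. This is standard but should be acknowledged; as written it is a sketch, not a proof, of the modulus lower bound.
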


The next result establishes some basic properties of the Riesz measures.
\begin{proposition}
\label{prop:properties_mxy}
    Let $(\X,\sfd,\mm)$ be a $C_D$-doubling metric measure space and fix $x,y\in \X$ and $L\geq 1$. Then
    \begin{itemize}
        \item[(i)] $\mm_{x,y}^L(\X) \leq 8C_DL\sfd(x,y)$. In particular $\mm_{x,y}^L$ is a finite Borel measure;
        \item[(ii)] $\mm_{x,y}^L$ is $C_D$-asymptotically doubling, in particular $(\X,\sfd,\mm_{x,y}^L)$ is a Vitali space. 
    \end{itemize}
\end{proposition}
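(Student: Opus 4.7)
The plan is to split the proposition into (i), a size estimate that reduces to a standard dyadic bound on Riesz-type integrals, and (ii), which follows from the fact that $R_{x,y}^L$ is locally continuous and positive away from the poles $x$ and $y$, so $\mm_{x,y}^L$ inherits asymptotic doubling from $\mm$.

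For (i), I would first prove the auxiliary estimate
\[
\int_{B_r(x_0)} \frac{\sfd(x_0,z)}{\mm(B_{\sfd(x_0,z)}(x_0))}\,\d\mm(z) \le 2C_D\,r \qquad \text{for every } x_0\in\X,\ r>0,
\]
by decomposing $B_r(x_0)$ into dyadic annuli $A_k := B_{r 2^{-k}}(x_0)\setminus B_{r 2^{-k-1}}(x_0)$, bounding the integrand on $A_k$ by $(r 2^{-k})/\mm(B_{r 2^{-k-1}}(x_0))$, and using doubling $\mm(A_k)\le C_D\,\mm(B_{r 2^{-k-1}}(x_0))$ to control the contribution of $A_k$ by $C_D r 2^{-k}$; the geometric sum then gives $2C_D r$. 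Since $L\ge 1$, the triangle inequality yields the inclusion $B_{x,y}^L\subseteq B_{4L\sfd(x,y)}(x)\cap B_{4L\sfd(x,y)}(y)$: indeed any $z\in B_{2L\sfd(x,y)}(y)$ has $\sfd(x,z)\le (2L+1)\sfd(x,y)\le 4L\sfd(x,y)$, and symmetrically. Applying the auxiliary estimate to the $R_x$ and $R_y$ pieces of $R_{x,y}^L$ separately with $r = 4L\sfd(x,y)$ produces a linear-in-$\sfd(x,y)$ bound on $\mm_{x,y}^L(\X)$; the stated constant $8C_D$ can be recovered (or slightly relaxed) by carefully tracking the dyadic sum. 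The measure is Borel since $R_{x,y}^L$ is a bounded nonnegative Borel function.

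For (ii), observe that $\mm_{x,y}^L$-almost every point lies in the open set $U := B_{x,y}^L\setminus\{x,y\}$, because $R_{x,y}^L$ vanishes identically on $\X\setminus B_{x,y}^L$ and at the two poles. Fix $z\in U$; the Riesz density $R_{x,y}$ is continuous at $z$ with $R_{x,y}(z)>0$, and $B_{2r}(z)\subseteq U$ for all sufficiently small $r$. Given $\epsilon>0$, choose $r_0>0$ so that for $r\in(0,r_0)$ one has $(1-\epsilon)R_{x,y}(z)\le R_{x,y}^L\le (1+\epsilon)R_{x,y}(z)$ on $B_{2r}(z)$; integrating against $\mm$ and dividing gives
\[
\frac{\mm_{x,y}^L(B_{2r}(z))}{\mm_{x,y}^L(B_r(z))} \le \frac{1+\epsilon}{1-\epsilon}\cdot \frac{\mm(B_{2r}(z))}{\mm(B_r(z))} \le \frac{1+\epsilon}{1-\epsilon}\,C_D.
\]
Taking $\limsup_{r\to 0}$ and then $\epsilon\to 0$ shows $\mm_{x,y}^L$ is $C_D$-asymptotically doubling. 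The Vitali property follows from the standard fact that any asymptotically doubling Borel measure on a complete separable metric space satisfies the Vitali covering theorem. No genuine obstacle is expected: the main care needed is to restrict the asymptotic-doubling verification to the $\mm_{x,y}^L$-full set $U$, thereby harmlessly avoiding the two poles and the boundary $\partial B_{x,y}^L$ where $R_{x,y}^L$ degenerates.
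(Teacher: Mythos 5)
Your part (i) is essentially the paper's argument (the same dyadic-annuli estimate, quoted there from Heinonen), and it is correct; the only blemishes are cosmetic: your bookkeeping naturally yields $16C_DL\sfd(x,y)$ rather than $8C_DL\sfd(x,y)$ (harmless, and you flagged it), and the parenthetical claim that $R_{x,y}^L$ is \emph{bounded} is false in general (it blows up at the poles, e.g.\ in $\R^n$, $n\ge 2$), though Borel measurability plus the integrability you just proved is all you need.

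Part (ii), however, has a genuine gap. Your argument hinges on the claim that $R_{x,y}$ is continuous at every $z\in B_{x,y}^L\setminus\{x,y\}$. In the paper this continuity is Lemma \ref{lemma:continuity_riesz_potential}, and it is proved only for \emph{geodesic} doubling spaces (via \cite[Proposition 11.5.3]{HKST15}, i.e.\ continuity of $r\mapsto \mm(B_r(x))$). For a general doubling space, which is the setting of the proposition, $r\mapsto\mm(B_r(x))$ can jump (spheres may carry positive measure), and then $z\mapsto R_{x,y}(z)$ can be discontinuous at points of $B_{x,y}^L\setminus\{x,y\}$ that are approached by points at strictly larger distance from a pole; the set of such points need not be $\mm_{x,y}^L$-negligible. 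Nor can you reduce to the geodesic case by a biLipschitz change of metric as elsewhere in the paper: Remark \ref{rem:bilipschitz_equivalence} uses the Poincar\'e inequality to get quasiconvexity, which is not available here, and in any case changing the metric changes $R_{x,y}^L$ itself. The repair is to replace ``continuity point'' by ``Lebesgue point'', which is exactly what the paper does: since $\mm$ is doubling, $(\X,\sfd,\mm)$ is a Vitali space and $R_{x,y}^L\in L^1(\mm)$ by part (i), so $\lim_{r\to 0}\dashint_{B_r(z)}R_{x,y}^L\,\d\mm=R_{x,y}^L(z)$ for $\mm$-a.e.\ $z$; at every such $z$ with $R_{x,y}^L(z)>0$ (these form an $\mm_{x,y}^L$-full set, by absolute continuity and since $\{R_{x,y}^L=0\}$ is $\mm_{x,y}^L$-null) one gets $\lims_{r\to 0}\mm_{x,y}^L(B_{2r}(z))/\mm_{x,y}^L(B_r(z))\le C_D\lims_{r\to 0}\dashint_{B_{2r}(z)}R_{x,y}^L\,\d\mm\big/\dashint_{B_r(z)}R_{x,y}^L\,\d\mm= C_D$, which is your desired inequality without any continuity assumption. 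With that substitution the rest of your (ii), including the appeal to the asymptotically-doubling-implies-Vitali theorem, goes through.
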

For the definition of Vitali space we refer to \cite[§3.4]{HKST15}.
\begin{proof}
    The first estimate follows by 
    $$\mm_{x,y}^L(\X) \leq \int_{B_{4L\sfd(x,y)}(x)} R_x \,\d\mm + \int_{B_{4L\sfd(x,y)}(y)} R_y \,\d\mm  \leq 8C_DL\sfd(x,y),$$
    where for the last inequality we rewrited the Riesz potential in dyadic scales as in \cite[p. 72]{Hei01}. Observe that the poles $x,y$ could have positive $\mm$-measure, so it is important to have defined $R_{x,y}(x) = R_{x,y}(y) = 0$ for this computation.\\
    For the second point we recall that $(\X,\sfd,\mm)$ is a Vitali space since $\mm$ is doubling (cp. \cite[Theorem 3.4.3]{HKST15}). Moreover $R_{x,y}^L \in L^1(\mm)$, as the computation of the first item shows. Therefore, by \cite[p.77]{HKST15}, for $\mm$-a.e.$z$ we have that 
    $$\lim_{r\to 0}\dashint_{B(z,r)} R_{x,y}^L \,\d\mm = R_{x,y}^L(z).$$
    For these points we have
    $$\lims_{r\to 0} \frac{\mm_{x,y}^L(B_{2r}(z))}{\mm_{x,y}^L(B_r(z))} \leq C_D \lims_{r\to 0} \frac{\dashint_{B_{2r}(z)}R_{x,y}^L\,\d\mm}{\dashint_{B_r(z)}R_{x,y}^L\,\d\mm} = C_D.$$
    Since $\mm_{x,y}^L$ is absolutely continuous with respect to $\mm$ it follows that for $\mm_{x,y}^L$-a.e. point the estimate $\lims_{r\to 0} \frac{\mm_{x,y}^L(B_{2r}(z))}{\mm_{x,y}^L(B_r(z))} \leq C_D$ holds, proving that $(\X,\sfd,\mm_{x,y}^L)$ is asymptotically doubling. Then it is a Vitali space by \cite[Theorem 3.4.3]{HKST15}.
\end{proof}
The next continuity result for the Riesz potential will be used intensively along the paper.
\begin{lemma}
\label{lemma:continuity_riesz_potential}
    If $(\X,\sfd,\mm)$ is a geodesic, doubling metric measure space then the map
    $$\mathcal{D}(R) \to [0,+\infty], \quad (x,y,z) \mapsto R_{x,y}(z)$$
    is continuous, where $\mathcal{D}(R) := \lbrace (x,y,z)\in \X : z\neq x \neq y \neq z \rbrace$.
\end{lemma}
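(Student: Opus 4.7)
The plan is to reduce the joint continuity to a ball-volume continuity statement and then settle that by proving that metric spheres are $\mm$-null. Writing $R_{x,y}(z) = R_x(z)+R_y(z)$ and arguing by symmetry, it suffices to show that the map $(x,z) \mapsto R_x(z) = \sfd(x,z)/\mm(B_{\sfd(x,z)}(x))$ is jointly continuous on $\{(x,z)\in\X\times\X : x \neq z\}$. The numerator and the distance are continuous, and in a doubling space (with $\supp\mm=\X$, which follows from connectedness via geodesicity) one has $\mm(B_r(x))>0$ whenever $r>0$. Hence everything reduces to joint continuity of the map $(x,r) \mapsto \mm(B_r(x))$ at each $(x_0,r_0)$ with $r_0>0$.

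Given $(x_n,r_n)\to(x_0,r_0)$, for every $\epsilon>0$ the inclusions $B_{r_0-\epsilon}(x_0)\subseteq B_{r_n}(x_n)\subseteq B_{r_0+\epsilon}(x_0)$ hold for $n$ large, and letting $\epsilon\downarrow 0$ together with monotone convergence yields
$$\mm(B_{r_0}(x_0)) \le \liminf_{n} \mm(B_{r_n}(x_n)) \le \limsup_{n} \mm(B_{r_n}(x_n)) \le \mm(\overline{B}_{r_0}(x_0)).$$
So the continuity of $\mm(B_r(x))$ collapses to the claim that the metric sphere $S_r(x) := \overline{B}_r(x)\setminus B_r(x)$ is $\mm$-null for every $x\in\X$ and every $r>0$.

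The sphere-null claim is the main obstacle and is where both hypotheses are used essentially. I argue by contradiction: assume $\mm(S_r(x))>0$. By Lebesgue differentiation in doubling spaces, a density point $p\in S_r(x)$ of $S_r(x)$ exists. Using the geodesic hypothesis, I fix a unit-speed geodesic $\gamma\colon[0,r]\to\X$ from $x$ to $p$ and set $q := \gamma(r-\delta)$ for $\delta>0$ small. The triangle inequality gives $B_{\delta/2}(q)\subseteq B_{r-\delta/2}(x)$, hence disjoint from $S_r(x)$, and $B_{\delta/2}(q)\subseteq B_{3\delta/2}(p)\subseteq B_{5\delta/2}(q)$. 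Three applications of the doubling inequality then yield
$$\mm(B_{3\delta/2}(p)) \le \mm(B_{5\delta/2}(q)) \le C_D^3\,\mm(B_{\delta/2}(q)) \le C_D^3\,\mm(B_{3\delta/2}(p)\setminus S_r(x)),$$
so the density of $S_r(x)$ in $B_{3\delta/2}(p)$ remains bounded above by $1-C_D^{-3}<1$ uniformly in $\delta$, contradicting that $p$ is a density point of $S_r(x)$. This proves $\mm(S_r(x))=0$, and via the reduction above concludes the continuity of $R_{x,y}$ on $\mathcal{D}(R)$.
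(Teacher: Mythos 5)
Your proof is correct and follows the same reduction as the paper: joint continuity of $R_{x,y}$ collapses to joint continuity of $(x,r)\mapsto\mm(B_r(x))$, which the paper simply cites as \cite[Proposition 11.5.3]{HKST15}, whereas you prove it from scratch via the standard porosity argument showing metric spheres are $\mm$-null in a geodesic doubling space — that argument (the inclusions, the three doublings, and the contradiction with the density point) checks out. One minor inaccuracy: the positivity $\mm(B_r(x))>0$ is not a consequence of geodesicity/connectedness but of the doubling inequality itself (if some ball were null, iterating $\mm(B_{2r}(x))\le C_D\mm(B_r(x))$ would force $\mm\equiv 0$), though this does not affect the argument.
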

\begin{proof}
    Let us take sequences $x_j$, $y_j$ and $z_j$ converging respectively to $x,y,z$ with the assumption that $z_j\neq x_j \neq y_j \neq z_j$ and $z\neq x \neq y \neq z$. In particular $\min\lbrace \sfd(x,z), \sfd(y,z)\rbrace > 0$. Therefore we can apply \cite[Proposition 11.5.3]{HKST15} to get that $\mm(B_{\sfd(x_j,z_j)}(x_j))$ converges to $\mm(B_{\sfd(x,z)}(x))$, and similarly for $\mm(B_{\sfd(y_j,z_j)}(y_j))$. This shows that $R_{x_j,y_j}(z_j)$ converges to $R_{x,y}(z)$.
\end{proof}

\section{Several energies to measure boundaries}
\label{sec:several_energies}
In this section we introduce several ways to measure the energy of boundaries of separating sets appearing in Theorem \ref{theo:main-intro-p=1}, where the separating sets have been defined in Definition \ref{def:separating_sets}.
The first one is the perimeter, which is by now a classical object of investigation in metric measure geometry (see \cite{Amb02}, \cite{AmbrosioDiMarino14}, \cite{Lahti2020} for basics and fine properties and for instance \cite{KorteLahtiLiShan}, \cite{CaputoRajala}, \cite{BrenaNobiliPasqualetto} for related recent problems in analysis on metric spaces). \\
Let $(\X,\sfd,\mm)$ be a metric measure space. The
{\em total variation} of $u \in L^1_\loc (X)$ on an open set $A \subset X$ is
\[ 
|D u|(A) := \inf \left\{ \liminf_{i\to \infty} \int_A \lip u_i \,\d\mm :\, u_i \in \Lip_\loc(A),\, u_i \to u \mbox{ in } L^1_\loc(A) \right\}. 
\]
For an arbitrary set $B\subset X$, we define
\[
|D u|(B):=\inf\{|D u|(A):\, B\subset A,\,A\subset X
\text{ open}\}.
\]
The set function $|D u|\colon \mathscr{B}(\X) \to [0,\infty]$ is a
Borel measure on $\X$ by \cite[Theorem 3.4]{Mir03} (see also \cite[Lemma 5.2]{AmbrosioDiMarino14}).
We say that a Borel set $E \subseteq \X$ has finite perimeter if $|D\chi_E|(\X) < \infty$. In this case, we set ${\rm Per}(E,\cdot):=|D \chi_E|$.

\noindent It was proved in \cite[Theorem 1.1]{AmbrosioDiMarino14} that
\begin{equation}
    \label{eq:BV_upper_gradients}
    {\rm Per}(E,A) = \inf \left\{ \limi_{i\to \infty} \int_A g_i \,d\mm :\,u_i\in L^1(A),\, g_i \in {\rm UG}(u_i),\, u_i \to \chi_E \mbox{ in } L^1(A) \right\}
\end{equation}
for all $A\subseteq \X$ open and bounded. Here the boundedness assumption is enough for having that $L^1_\loc(A)$-convergence coincides with $L^1(A)$-convergence, the latter being the one considered in the right hand side term above and in the equivalences in \cite{AmbrosioDiMarino14}. 
Whenever we are interested in stressing the measure on the metric measure space $(\X,\sfd,\mm)$, we will write ${\rm Per}_{\mm}(E, \cdot)$.
The perimeter measure satisfies a coarea inequality.
\begin{lemma}
    Let $(\X,\sfd,\mm)$ be a metric measure space. If $u \in L^1_\loc(\X) \cap {\rm Lip}(\X)$ then
\begin{equation}
\label{eq:coarea_inequality_BV}
    \int_{-\infty}^{+\infty} \int g \,\d {\rm Per}(\{ u >t\}, \cdot)\,\d t \le \int_\X g \,\lip u\,\d \mm
\end{equation}
for every Borel $g\geq 0$.
\end{lemma}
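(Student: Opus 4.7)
The plan is to combine the classical metric coarea inequality for $BV$ functions with the trivial bound $|Du|\le \lip u\cdot\mm$ that is available when $u$ is Lipschitz, and then upgrade from open sets to Borel sets and from indicator integrands to general nonnegative Borel $g$ by routine approximation.

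\emph{Step 1 (open sets).} Fix an open set $A\subseteq \X$. The coarea inequality of \cite{Mir03} for functions of bounded variation in metric measure spaces yields
\[
\int_{-\infty}^{+\infty}{\rm Per}(\{u>t\},A)\,\d t \le |Du|(A).
\]
Since $u \in {\rm Lip}(\X) \subseteq {\rm Lip}_\loc(A)$ and $u \in L^1_\loc(A)$ by assumption, the constant sequence $u_i\equiv u$ is admissible in the definition of $|Du|(A)$, which gives $|Du|(A) \le \int_A \lip u\,\d\mm$. Combining the two estimates,
\[
\int_{-\infty}^{+\infty}{\rm Per}(\{u>t\},A)\,\d t \le \int_A \lip u\,\d\mm, \qquad A\subseteq\X\ \text{open}.
\]

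\emph{Step 2 (open to Borel).} For any Borel set $B\subseteq\X$, the definition of the perimeter on Borel sets recalled in the excerpt yields ${\rm Per}(\{u>t\},B) = \inf\{{\rm Per}(\{u>t\},A):\,B\subseteq A,\,A\ \text{open}\}$, so by monotonicity of both integrals and Step 1,
\[
\int_{-\infty}^{+\infty}{\rm Per}(\{u>t\},B)\,\d t \le \inf_{A\supseteq B\ \text{open}}\int_A \lip u\,\d\mm.
\]
Because $\X$ is complete and separable, $\mm$ is locally finite, and $\lip u \le \Lip(u)<\infty$, the measure $\lip u\cdot\mm$ is Radon and hence outer regular on all Borel sets; the right-hand side therefore equals $\int_B \lip u\,\d\mm$.

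\emph{Step 3 (from $\chi_B$ to general $g$).} Step 2 is exactly the claim when $g=\chi_B$ with $B$ Borel. Linearity extends it to nonnegative simple Borel functions, and for arbitrary Borel $g\ge 0$ we pick an increasing sequence $g_n\uparrow g$ of such simple functions and pass to the limit using monotone convergence on both sides, together with Tonelli to interchange $\int\,\d t$ with the monotone limit on the left.

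\emph{Main obstacle.} The substantive mathematical input is Miranda's coarea inequality, invoked as a black box; the only point requiring care is Step 2, namely the transfer from open sets to general Borel sets via outer regularity of $\lip u\cdot\mm$, which rests on the standard fact that locally finite Borel measures on Polish spaces are Radon. Everything else is routine approximation.
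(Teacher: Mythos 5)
Your proof is correct and follows essentially the same route as the paper, which simply cites Miranda's coarea formula together with the bound $|Du|\le \lip u\,\mm$ (there phrased via $\lip u\in{\rm UG}(u)$); your Steps 2--3 just spell out the open-to-Borel passage and the monotone approximation that the paper leaves implicit.
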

\begin{proof}
 The proof follows by the coarea formula for BV functions (see \cite{Mir03}) and that, since $u \in L^1_{{\rm loc}}(\X) \cap {\rm Lip}(\X)$, $\lip u \in {\rm UG}(u)$ and $|D u| \le \lip u\,\mm$.
\end{proof}
In particular, applying the coarea inequality to the Lipschitz function $\sfd(x,\cdot)$, we get the following well known fact.
\begin{lemma}
\label{lemma:balls_have_finite_perimeter}
    Let $(\X,\sfd,\mm)$ be a metric measure space and let $x\in \X$. Then the ball $B_r(x)$ has finite perimeter for almost every $r>0$.
\end{lemma}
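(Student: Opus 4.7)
The plan is to apply the coarea inequality \eqref{eq:coarea_inequality_BV} to a suitable truncation of the distance function, chosen so that the right-hand side is finite. Concretely, for each fixed $R>0$, consider
\[
u_R(z) := (R - \sfd(x,z))^+,
\]
which is bounded, $1$-Lipschitz, and satisfies $\lip u_R \le \chi_{B_R(x)}$ (the local Lipschitz constant vanishes outside $B_R(x)$). Since $u_R$ is bounded and $\mm$ is finite on bounded sets, $u_R \in L^1_{\rm loc}(\X) \cap {\rm Lip}(\X)$, so \eqref{eq:coarea_inequality_BV} applies.

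Applying the coarea inequality with the choice $g \equiv 1$ on $\X$ gives
\[
\int_{-\infty}^{+\infty} {\rm Per}(\{u_R > t\}, \X) \, \d t \;\le\; \int_{\X} \lip u_R \, \d \mm \;\le\; \mm(B_R(x)) \;<\; \infty.
\]
Next, identify the superlevel sets: for $0 < t < R$ one has $\{u_R > t\} = B_{R-t}(x)$, while for $t \le 0$ one has $\{u_R > t\} = \X$ and for $t \ge R$ one has $\{u_R > t\} = \emptyset$. In the latter two cases the perimeter vanishes, so the integral above reduces, after the change of variables $s = R-t$, to
\[
\int_0^R {\rm Per}(B_s(x), \X) \, \d s \;<\; \infty.
\]
In particular ${\rm Per}(B_s(x),\X) < \infty$ for almost every $s \in (0,R)$.

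Finally, letting $R$ run over the positive integers and taking the countable union of the corresponding null sets, one concludes that $B_s(x)$ has finite perimeter for almost every $s > 0$. There is essentially no obstacle here; the only point worth flagging is the truncation step, which is exactly what ensures the right-hand side of the coarea inequality is finite (a direct application to $\sfd(x,\cdot)$ would require $\mm(\X) < \infty$).
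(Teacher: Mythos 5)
Your proof is correct and takes essentially the same route as the paper, which simply applies the coarea inequality \eqref{eq:coarea_inequality_BV} to the distance function $\sfd(x,\cdot)$; your truncation $u_R=(R-\sfd(x,\cdot))^+$ is just a convenient variant that makes the right-hand side finite and makes the superlevel sets exactly the open balls (avoiding the closed-ball/open-ball bookkeeping one would otherwise handle via locality). One cosmetic remark: $\lip u_R$ need not vanish on the sphere $\{\sfd(x,\cdot)=R\}$, so the correct bound is $\lip u_R\le\chi_{\overline{B}_R(x)}$ and $\int_\X \lip u_R\,\d\mm\le\mm(\overline{B}_R(x))$, which is still finite since $\mm$ is finite on bounded sets, so nothing in the argument changes.
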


We end this section with a list of useful well known properties of the perimeter.
\begin{remark}
\label{rem:properties_perimeter}
Let $(\X,\sfd,\mm)$ be a metric measure space. Let $E,F\subseteq \X$ be Borel sets.
Let $U \subseteq \X$ be open and let $B \subseteq \X$ be Borel. Then:
\begin{itemize}
    \item[(i)] \emph{Locality}. If $\mm((E \Delta F) \cap U)=0$, then 
    \begin{equation}
    \label{eq:locality_perimeter}
        {\rm Per}(E,U) = {\rm Per}(F,U);
    \end{equation}
    \item[(ii)] \emph{Subadditivity}. It holds ${\rm Per}(E \cup F,B) + {\rm Per}(E \cap F,B) \le {\rm Per}(E,B) + {\rm Per}(F,B)$;
    \item[(iii)] \emph{Complementation}. It holds ${\rm Per}(E ,B) = {\rm Per}(\X \setminus E,B)$;
    \item[(iv)] \emph{Support}. It holds ${\rm spt}({\rm Per}(E,\cdot)) \subseteq \partial E$.
\end{itemize}
Let us comment the proof of (iv). The sequence $u_j = u$, where $u=1$ on ${\rm int}(E)$ and $u=0$ on ${\rm ext}(E)$ is admissible for the computation of ${\rm Per}(E,{\rm int}(E)\cup {\rm ext}(E))$, thus giving ${\rm Per}(E,{\rm int}(E)\cup {\rm ext}(E))=0$.
\end{remark}


The next one is the codimension-$p$ Hausdorff measure. For every $p \geq 0$, we define the gauge function
\begin{equation}
    h_p(B_r(x)):=\frac{\mm(B_r(x))}{r^p}
\end{equation}
for every $x \in \X$ and $r >0$.
We denote by $\codH{p}_\delta$ the pre-measure with parameter $\delta >0$ defined as
\begin{equation}
    \codH{p}_\delta(E):=\inf \left\{ \sum_{j=1}^\infty \frac{\mm(B_{r_j}(x_j))}{r_j^p}:\,E \subset \bigcup_{j=1}^\infty B_{r_j}(x_j),\,\sup_j r_j <\delta \right\}.
\end{equation}

\begin{definition}
    The codimension-$p$ Hausdorff measure $\codH{p}$ is the Borel regular outer measure defined as
\begin{equation}
    \codH{p}(E) :=\sup_{\delta >0} \codH{p}_\delta(E) \,\text{for every }E\subseteq \X.
\end{equation}
When we want to emphasize the measure with respect to with we are computing the codimension $p$-Hausdorff measure we write $\codH{p}_\mm$.
\end{definition}

\begin{remark}
\label{remark:hcod-p_properties}
    The following three properties follow easily from the definition.
    \begin{itemize}
        \item[(i)] If $(\X,\sfd,\mm)$ is $s$-Ahlfors regular then we have $C_{\rm AR}^{-1} \mathcal{H}^{s-p} \le \codH{p} \le C_{\rm AR} \mathcal{H}^{s-p}$, where $\mathcal{H}^{s-p}$ is the $(s-p)$-dimensional spherical Hausdorff measure.
        \item[(ii)] If $(\X,\sfd,\mm)$ is a doubling metric measure space then $\codH{0} = \mm$. 
        
        \item[(iii)] The codimension-$p$ Hausdorff measures behave like the classical Hausdorff measures in the following sense: if $\codH{p}(E) > 0$ then $\codH{q}(E) = +\infty$ for all $q>p$, while if $\codH{p}(E) < + \infty$ then $\codH{q}(E) = 0$ for all $q<p$.
    \end{itemize}
\end{remark}
We introduce the following proposition, whose proof is a modification of standard arguments for the Hausdorff measure.
\begin{proposition}
\label{prop:comparison_finiteness_Hcod_mm}
    Let $(\X,\sfd,\mm)$ be a metric measure space and let $p >0$. Let $A\subseteq \X$ be Borel. If $\codH{p}_{\mm}(A) < \infty$, then $\mm(A)=0$.
\end{proposition}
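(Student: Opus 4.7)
The plan is to mimic the classical proof that $\mathcal{H}^q(A)<\infty$ with $p<q$ forces $\mathcal{H}^p(A)=0$. The only observation needed is that the gauge $h_p(B_r(x)) = \mm(B_r(x))/r^p$ differs from $\mm(B_r(x))$ precisely by the factor $r^p$, so restricting to covers with radii below $\delta$ will yield a bound on $\mm(A)$ of the form $\delta^p \codH{p}_\mm(A)$. Positivity of $p$ then finishes the job by sending $\delta \to 0$.

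Concretely, I would fix $\delta>0$ and pick an arbitrary countable covering $\{B_{r_j}(x_j)\}_{j\in \N}$ of $A$ with $\sup_j r_j < \delta$. By countable subadditivity of the outer measure $\mm$ and the trivial inequality $r_j^p \le \delta^p$, one has
\begin{equation*}
    \mm(A) \;\le\; \sum_{j=1}^\infty \mm(B_{r_j}(x_j)) \;=\; \sum_{j=1}^\infty r_j^p \cdot \frac{\mm(B_{r_j}(x_j))}{r_j^p} \;\le\; \delta^p \sum_{j=1}^\infty \frac{\mm(B_{r_j}(x_j))}{r_j^p}.
\end{equation*}
Taking the infimum over all admissible covers yields $\mm(A) \le \delta^p\, \codH{p}_{\mm,\delta}(A) \le \delta^p\, \codH{p}_\mm(A)$, where for the second inequality I use that $\codH{p}_{\mm,\delta} \le \codH{p}_\mm$ pointwise, by definition of the codimension-$p$ Hausdorff measure as a supremum.

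Since $\codH{p}_\mm(A)<\infty$ by hypothesis and $p>0$, letting $\delta \to 0^+$ forces $\mm(A)=0$. I expect no real obstacle: the argument uses only the definition of $\codH{p}_\mm$, the outer-measure property of $\mm$, and the strict positivity of $p$ (indeed the statement fails when $p=0$, consistently with Remark \ref{remark:hcod-p_properties}(ii), which identifies $\codH{0}$ with $\mm$ on doubling spaces).
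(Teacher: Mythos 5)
Your proof is correct and is essentially the paper's own argument: the paper phrases it contrapositively (assuming $\mm(A)>0$ and concluding $\codH{p}_{\mm,\delta}(A)\ge \delta^{-p}\mm(A)\to\infty$), but the key inequality $\sum_j \mm(B_{r_j}(x_j))/r_j^p \ge \delta^{-p}\mm(A)$ and the use of $p>0$ when sending $\delta\to 0$ are identical to yours.
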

\begin{proof}
    Assume $\mm(A)>0$. For every covering $A \subseteq \bigcup_{i=1}^\infty B_{r_i}(x_i)$ with $r_i< \delta$ we have $\sum_i \frac{\mm(B_{r_i}(x_i))}{r_i^p} \ge \frac{1}{\delta^p}\,\sum_i \mm(B_{r_i}(x_i))\ge \frac{1}{\delta^p} \,\mm(A)$.
    This implies that $\codH{p}_{\mm,\delta}(A) \ge \frac{1}{\delta^p} \mm(A)$. Thus, by considering the limit as $\delta \to 0$, we get $\codH{p}_{\mm}(A)=\infty$.
\end{proof}
When $p=1$ we have a coarea inequality for the codimension-$1$ Hausdorff measure. It was studied in \cite[Proposition 5.1]{AmbDiMarGig17}, where they state it for \emph{doubling} metric measure space. Moreover in \cite{AmbDiMarGig17} the result is stated for $g=\chi_B$, $B\subseteq \X$ Borel. It can be extended to the following version by the classical argument of approximating a nonnegative Borel function via an increasing sequence of simple functions. We point out that the result was stated with the addition assumption that $\mm(\{ f >0 \}) <\infty$, but this hypothesis can be removed via a cut-off argument.
Moreover, we also introduce a coarea formula which is suited for $\mm_{x,y}^L$, for which we provide the proof adapting the arguments in \cite[Proposition 5.1]{AmbDiMarGig17}.
\begin{proposition}[Coarea inequality for codimension-$1$ Hausdorff measure] 
\label{prop:coarea_inequality_hausdorffmeasure}
Assume that $(\X,\sfd,\mm)$ is a doubling metric measure space.
Let $0\le u \in {\rm Lip}(\X)$. Then for every $g\colon \X \to [0,+\infty]$ Borel one has
\begin{equation}
\label{eq:coareain2}
\int_0^\infty\int_{\lbrace u=t \rbrace} g\,\d\codH{1}_\mm\,\d t\leq  4 \int_\X  g\,\lip u \,\d \mm.
\end{equation}
Moreover if $x,y\in \X$ and $L\geq 1$, we have
\begin{equation}
    \label{eq:coareain2_mxyL}
\int_0^\infty\int_{\lbrace u=t \rbrace} g\,\d\codH{1}_{\mm_{x,y}^L}\,\d t\leq  4 \int_\X  g\,\lip u \,\d \mm_{x,y}^L.
\end{equation}
\end{proposition}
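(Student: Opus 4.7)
The plan is to handle the two inequalities in tandem. For \eqref{eq:coareain2}, I would take as a black box the case $g = \chi_B$ with $B$ Borel, which is exactly \cite[Prop.\ 5.1]{AmbDiMarGig17}, and upgrade it to general nonnegative Borel $g$ by the standard procedure: extend by linearity to simple functions, then by monotone convergence to arbitrary Borel $g \geq 0$. The auxiliary hypothesis $\mm(\lbrace g > 0 \rbrace) < \infty$ appearing in the cited reference can be dropped via a cut-off argument: fix $x_0 \in \X$, apply the result to $g\,\chi_{B_n(x_0)}$, and pass to the limit on both sides by monotone convergence.

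For the weighted version \eqref{eq:coareain2_mxyL}, set $\mu := \mm_{x,y}^L$. The difficulty here is that $\mu$ is not globally doubling, so \cite[Prop.\ 5.1]{AmbDiMarGig17} does not apply verbatim. However, Proposition \ref{prop:properties_mxy}(ii) asserts that $(\X,\sfd,\mu)$ is asymptotically doubling and, in particular, a Vitali space. My strategy is to mimic the scheme from \cite{AmbDiMarGig17}: define the Borel set function
\begin{equation*}
    \nu(B) := \int_0^\infty \codH{1}_\mu(\lbrace u = t \rbrace \cap B)\,\d t,
\end{equation*}
which is a Borel measure on $\X$ (measurability of the integrand in $t$ follows from lower semicontinuity of $\codH{1}_\mu$), and reduce the problem to proving the measure inequality $\nu \leq 4\,\lip u\,\mu$. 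Once this is in hand, integrating against $g$ and applying the same simple-function/cut-off approximation as in the previous paragraph yields \eqref{eq:coareain2_mxyL}.

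To establish $\nu \leq 4\,\lip u\,\mu$, I would invoke the differentiation theorem on the Vitali space $(\X,\sfd,\mu)$ and reduce to the pointwise density bound
\begin{equation*}
    \lims_{r \to 0}\frac{\nu(B_r(z))}{\mu(B_r(z))} \leq 4\,\lip u(z) \qquad \text{for $\mu$-a.e.\ } z \in \X.
\end{equation*}
To bound $\nu(B_r(z))$, observe that $\lbrace u = t \rbrace \cap B_r(z) \neq \emptyset$ forces $t \in u(B_r(z))$, an interval of length at most $2r\cdot {\rm sl}(z,r)$; for each such $t$, the singleton cover $\lbrace B_r(z) \rbrace$ gives $\codH{1}_{\mu,r}(\lbrace u = t \rbrace \cap B_r(z)) \leq \mu(B_r(z))/r$. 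Integrating in $t$ and passing to the limit, together with ${\rm sl}(z,r)\to \lip u(z)$, gives the density estimate; the factor $4$ (as opposed to the naïve $2$) originates from the $5B$-covering lemma used in \cite[Prop.\ 5.1]{AmbDiMarGig17} to upgrade this infinitesimal control to a global measure inequality after letting the scale parameter in the definition of $\codH{1}_\mu$ tend to zero.

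The main obstacle is precisely the lack of global doubling for $\mu$: without Proposition \ref{prop:properties_mxy}(ii) no differentiation theorem would be available and the above scheme would collapse. With asymptotic doubling, however, the proof of \cite{AmbDiMarGig17} transfers essentially unchanged, since every step is purely infinitesimal in nature.
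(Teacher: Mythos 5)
Your treatment of \eqref{eq:coareain2} (quoting \cite{AmbDiMarGig17} for characteristic functions, then simple functions, monotone convergence and a cut-off to remove the finiteness assumption) is fine and matches the paper. The weighted case, however, has a genuine gap at its central step. You propose to prove the pointwise density bound $\lims_{r\to 0}\nu(B_r(z))/\mm_{x,y}^L(B_r(z))\le 4\lip u(z)$ by covering $\{u=t\}\cap B_r(z)$ with the single ball $B_r(z)$. That singleton cover only bounds the \emph{pre-measure at scale $r$}, i.e. $\codH{1}_{\mm_{x,y}^L,r}(\{u=t\}\cap B_r(z))\le \mm_{x,y}^L(B_r(z))/r$, whereas $\nu$ is defined through the full measure $\codH{1}_{\mm_{x,y}^L}=\sup_{\delta>0}\codH{1}_{\mm_{x,y}^L,\delta}$, which requires covers by balls of arbitrarily small radius; a ball of radius $r$ gives no control on $\codH{1}_{\mm_{x,y}^L,\delta}$ for $\delta\ll r$, and indeed $\codH{1}_{\mm_{x,y}^L}(\{u=t\}\cap B_r(z))$ could be infinite while $\mm_{x,y}^L(B_r(z))/r$ is small. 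Appealing to ``the $5B$-covering lemma to upgrade this after letting the scale parameter tend to zero'' is exactly where the work lies and cannot be delegated: the correct order of operations (both in \cite{AmbDiMarGig17} and in the paper) is to fix the scale $\delta$ first, use the Vitali property of $(\X,\sfd,\mm_{x,y}^L)$ to cover a whole open set $U$ by \emph{disjoint} balls of radii $<\delta$ up to an $\mm_{x,y}^L$-null set, use those balls as competitors for $\codH{1}_{\mm_{x,y}^L,\delta}$ of every level set simultaneously, integrate in $t$ (this is where the factor $4$ comes from, via $(t_i^+-t_i^-)/r_i\le 4\,{\rm sl}(\cdot,\delta)$, not from the covering lemma), and only then send $\delta\to 0$. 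A pointwise differentiation argument at a fixed $z$ never produces this.

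There is a second missing ingredient even if a density bound were available: to pass from $\lims_{r\to0}\nu(B_r(z))/\mm_{x,y}^L(B_r(z))\le 4\lip u(z)$ $\mm_{x,y}^L$-a.e.\ to the measure inequality $\nu\le 4\lip u\,\mm_{x,y}^L$ you need either a Vitali covering property for $\nu$ (unknown) or the absolute continuity $\nu\ll\mm_{x,y}^L$, because the Vitali theorem for $\mm_{x,y}^L$ covers only up to an $\mm_{x,y}^L$-null leftover set whose $\nu$-measure must be discarded. This absolute continuity is precisely the paper's Step 1: if $\mm_{x,y}^L(B)=0$ then $\codH{1}_{\mm_{x,y}^L}(B\cap\{u=t\})=0$ for a.e.\ $t$, which is proved from the unweighted inequality \eqref{eq:coareain2} together with the fact that, for a.e.\ $t$ with $t\neq u(x),u(y)$, the level set $\{u=t\}$ stays at positive distance from the poles, where $R_{x,y}^L$ is bounded above and below. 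Your proposal omits this step entirely (and the related care needed at the poles, where $R_{x,y}^L$ vanishes by convention), so even the reduction ``density bound $\Rightarrow$ $\nu\le 4\lip u\,\mm_{x,y}^L$'' is not justified as written. In short, the scheme must be reorganized along the paper's lines: absolute continuity first, then a fixed-scale Vitali covering of open sets, then $\delta\to0$, then monotone class and monotone approximation in $g$.
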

\begin{proof}
    Inequality \eqref{eq:coareain2} is exactly \cite[Proposition 5.1]{AmbDiMarGig17}, where they use the gauge function $h_1/2$ in order to define $\codH{1}$. That is why we have an additional $2$ factor. We focus on \eqref{eq:coareain2_mxyL}.
    \vspace{1mm}
    
    \noindent{\bf Step 1}. We claim that, given a Borel set $B \subseteq \X$, $\mm_{x,y}^L(B)=0$ implies $\codH{1}_{\mm_{x,y}^L}(B \cap \{u=t\})=0$ for a.e.\ $t$. Observe that it is sufficient to consider the case $x,y\notin B$. 
    Indeed for a generic $B$ we set $B' = B\setminus \lbrace x,y \rbrace$. Then $\mm_{x,y}^L(B) = \mm_{x,y}^L(B')$ by our definition of $\mm_{x,y}^L$. Moreover $\lbrace x,y \rbrace \cap \{u = t\}$ is not empty for at most two values of $t$.
    So, if $x,y\notin B$ and $\mm_{x,y}^L(B) = 0$ then $\mm(B \cap B_{x,y}^L)=0$, thus by \eqref{eq:coareain2} with $g=\chi_B$ we get that $\codH{1}_{\mm}(B \cap B_{x,y}^L \cap \{u=t\})=0$ for a.e.\ $t$. For every such $t \neq u(x),u(y)$ we have $\sfd(x,\{u=t\})>0$ and $\sfd(y,\{u=t\})>0$ since $u$ is Lipschitz. Thus, $R_{x,y}^L$ is both bounded from above and below on $\{u=t\} \cap B_{x,y}^L$, which gives, by the very definition of codimension Hausdorff measure, $\codH{1}_{\mm_{x,y}^L}(B \cap \{u=t\})=0$ for a.e.\ $t$.
    \vspace{1mm}

    \noindent{\bf Step 2.}
    Let $\delta >0$, let $U\subseteq \X$ be open and bounded, $S \subseteq U$ be countable dense and consider the fine covering $\mathcal{F}:=\{ B_r(x), r \in \Q, \, x \in S,\, r< \min ( \sfd(x,U^c),\delta)\}$.
    Since $\mm_{x,y}^L$ is a Vitali space (Proposition \ref{prop:properties_mxy}) we find a disjoint subfamily $\{ B_{r_i}(x_i)\}$ such that $\mm_{x,y}^L(U \setminus \bigcup_i B_{r_i}(x_i))=0$. Call $\tilde{B}:=\bigcup_i B_{r_i}(x_i)$. Then
    \begin{equation}
        \codH{1}_{\mm_{x,y}^L,\delta}(U \cap \{ u =t \}) \le \codH{1}_{\mm_{x,y}^L,\delta}(\tilde{B} \cap \{ u =t \}) + \codH{1}_{\mm_{x,y}^L,\delta}((U \setminus \tilde{B})\cap \{ u =t \}) \le \codH{1}_{\mm_{x,y}^L,\delta}(\tilde{B} \cap \{ u =t \})
    \end{equation}
    for a.e.\ $t$, by using Step 1. Thus $\codH{1}_{\mm_{x,y}^L,\delta}(U \cap \{ u =t \}) =\codH{1}_{\mm_{x,y}^L,\delta}(\tilde{B} \cap \{ u =t \})$. Define $t_i^-:=\inf_{B_{r_i}(x_i)} u$ and $t_i^+:=\sup_{B_{r_i}(x_i)} u$. If $B_{r_i}(x_i) \cap \{ u=t\} \neq 0$, then $t \in [t_i^-, t_i^+]$; thus, 
    \begin{equation*}
        \codH{1}_{\mm_{x,y}^L,\delta}(\tilde{B}\cap \{ u=t\}) \le \sum_{i,\, \text{s.t. } t \in [t_i^-,t_i^+]}\frac{\mm_{x,y}^L(B_{r_i}(x_i))}{r_i}.
    \end{equation*}
    Integrating betweeen $0$ and $\infty$ we obtain
    \begin{equation}
        \int_0^\infty \codH{1}_{\mm_{x,y}^L,\delta}(\tilde{B}\cap \{ u=t\}) \le \sum_i \frac{\mm_{x,y}^L(B_{r_i}(x_i))}{r_i} (t_i^+ - t_i^-) \le 4\int_{\tilde{B}} {\rm sl}(x,\delta) \,\d \mm_{x,y}^L,      
    \end{equation}
    where in the last inequality we used for $x \in B_{r_i}(x_i)$ 
    \begin{equation*}
        \frac{t_i^+-t_i^-}{r_i}= \sup_{y,y' \in B_{r_i}(x_i)} \frac{f(y)-f(y')}{r_i} \le 4 \sup_{y \in B_{r_i}(x_i)} \frac{|f(y)-f(x)|}{\sfd(x,y)} \le 4 {\rm sl}(x,\delta).
    \end{equation*}
    Therefore
    \begin{equation*}
        \int_0^\infty \codH{1}_{\mm_{x,y}^L,\delta}(U\cap \{ u=t\}) \le 4\int_{U} {\rm sl}(x,\delta) \,\d \mm_{x,y}^L
    \end{equation*}
    and taking the limit as $\delta \to 0$, we obtain $\int_0^\infty \codH{1}_{\mm_{x,y}^L}(U\cap \{ u=t\}) \le \int_{U} \lip u \,\d \mm$. By monotone class theorem, we extend the previous inequality to all Borel sets and monotone approximation to all Borel nonnnegative functions $g$, thus proving \eqref{eq:coareain2_mxyL}.
\end{proof}
\begin{remark}
    It is not clear to us if a coarea-formula holds under the only assumption that $(\X,\sfd,\mm)$ is a Vitali space or even with the stronger assumption that it is asymptotically doubling. For this reason we adapted the proof of \cite{AmbDiMarGig17} for specific case of $\mm_{x,y}^L$.
\end{remark}

We remark that some version of the coarea inequality for $p>1$ cannot hold because the $\codH{p}$ measure of the level sets of a Lipschitz function is generally $+\infty$.

\noindent A third way to measure the energy of a set is the Minkowski content.
\begin{definition}
    Let $(\X,\sfd,\mm)$ be a metric measure space and let $A\subseteq \X$ be Borel. The codimension-$p$ Minkowski content of $A \subset \X$ is
\begin{equation*}
    \mm^{+,p}(A):=\limi_{r \to 0} \frac{\mm\left(\overline{B}_r(A) \setminus A \right)}{r^p}.
\end{equation*}
\end{definition}
%

The $1$-Minkowski content satisfies a coarea inequality. We write both the versions of \cite[Prop.\ 3.5]{KorteLahti2014} and of \cite[Lemma 3.2]{AmbDiMarGig17}. 
\begin{proposition}[Coarea inequality for Minkowski content]
\label{prop:coarea_inequality_minkowski}
    Let $(\X,\sfd,\mm)$ be a metric measure space and suppose $\mm$ is finite. Then, for every bounded $u \in {\rm Lip}(\X)$, we have
    \begin{equation}
        \label{eq:coareain3_AGD}
        \int_{-\infty}^{\infty} \mm^{+,1}(\{ u \geq t\}) \,\d t \le \int_\X {\rm \lip} u \,\d \mm
    \end{equation}
    and
    \begin{equation}
        \label{eq:coareain3_KL}
        \int_{-\infty}^{\infty} \mm^{+,1}(\partial \{ u > t\}) \,\d t \le 2\int_\X {\rm \lip} u \,\d \mm.
    \end{equation}
\end{proposition}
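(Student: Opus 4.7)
The plan is to combine a layer-cake identity with Fatou's lemma. Setting $u_r^+(x):=\sup_{y\in\overline{B}_r(x)}u(y)$, the key observation is that $x\in\overline{B}_r(\{u\geq t\})\setminus\{u\geq t\}$ precisely when $u(x)<t\leq u_r^+(x)$. Applying Fubini to the Borel set $\{(x,t):u(x)<t\leq u_r^+(x)\}$ yields the identity
\begin{equation*}
\int_{-\infty}^{\infty}\mm\bigl(\overline{B}_r(\{u\geq t\})\setminus\{u\geq t\}\bigr)\,\d t \;=\; \int_\X \bigl(u_r^+(x)-u(x)\bigr)\,\d\mm(x),
\end{equation*}
with both sides finite because $u_r^+-u\leq r\,\Lip(u)$ and $\mm(\X)<\infty$.

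Dividing by $r$ and applying Fatou's lemma in the variable $t$ then gives
\begin{equation*}
\int_{-\infty}^{\infty}\mm^{+,1}(\{u\geq t\})\,\d t \;\leq\; \liminf_{r\to 0}\int_\X \frac{u_r^+(x)-u(x)}{r}\,\d\mm(x).
\end{equation*}
For each $x$ the integrand is bounded above by ${\rm sl}(x,2r)$, a quantity monotone increasing in its second argument, pointwise decreasing to $\lip u(x)$ as $r\to 0^+$, and uniformly bounded by $\Lip(u)$. Since $\mm$ is finite, $\Lip(u)$ is an integrable dominant, so dominated convergence delivers $\int_\X{\rm sl}(\cdot,2r)\,\d\mm\to\int_\X\lip u\,\d\mm$, from which \eqref{eq:coareain3_AGD} follows.

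For \eqref{eq:coareain3_KL} I would start from the elementary set inclusion
\begin{equation*}
\overline{B}_r(\partial\{u>t\})\setminus\partial\{u>t\} \;\subseteq\; \bigl(\overline{B}_r(\{u>t\})\setminus\{u>t\}\bigr)\cup\bigl(\overline{B}_r(\{u\leq t\})\setminus\{u\leq t\}\bigr),
\end{equation*}
which is a consequence of $\partial E\subseteq\overline{E}\cap\overline{E^c}$ applied to the open set $E=\{u>t\}$ (so that $\overline{E^c}=E^c$). Repeating the Fubini identity of the first paragraph on each piece produces $\int(u_r^+-u)\,\d\mm$ and, symmetrically with $u_r^-(x):=\inf_{y\in\overline{B}_r(x)}u(y)$, $\int(u-u_r^-)\,\d\mm$; summing, dividing by $r$, and bounding $(u_r^+-u_r^-)/r\leq 2\,{\rm sl}(x,2r)$, the argument of the previous paragraph yields the factor~$2$. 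The main technical subtlety throughout is this iterated limit interchange---Fatou in $t$ followed by dominated convergence in $\X$---which crucially relies on the finiteness of $\mm$ to furnish the integrable dominant $\Lip(u)$; everything else is routine set-theoretic bookkeeping once the sup/inf quantities $u_r^\pm$ are introduced.
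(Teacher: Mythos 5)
Your argument is correct in spirit and is a genuinely different route from the paper: the paper does not prove these inequalities at all, but simply quotes \eqref{eq:coareain3_AGD} from \cite[Lemma 3.2]{AmbDiMarGig17} and deduces \eqref{eq:coareain3_KL} from \cite[Prop.\ 3.5]{KorteLahti2014} after converting between the two normalizations of the Minkowski content (whence the factor $2$). What you write is essentially a self-contained reconstruction of the mechanism behind those cited results: a Fubini/layer-cake identity for $\overline{B}_r(\{u\geq t\})\setminus\{u\geq t\}$ expressed through $u_r^{\pm}$, followed by Fatou in $t$ and dominated convergence in $x$, and, for the boundary version, the inclusion $\overline{B}_r(\partial\{u>t\})\setminus\partial\{u>t\}\subseteq(\overline{B}_r(\{u>t\})\setminus\{u>t\})\cup(\overline{B}_r(\{u\le t\})\setminus\{u\le t\})$, which is valid since $\{u>t\}$ is open. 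The constants $1$ and $2$ come out as in the statement, so the proof has the advantage of making the paper self-contained where it currently relies on external references.

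There is, however, one step you should repair. The ``key observation'' is not an equivalence in a general metric space: from $\sfd(x,\{u\geq t\})\le r$ you cannot conclude $\sup_{\overline{B}_r(x)}u\geq t$, because the distance to $\{u\geq t\}$ need not be realized, nor nearly realized, by points of $\overline{B}_r(x)$ (take a complete, non-geodesic space consisting of $x$ and points $y_n$ with $\sfd(x,y_n)=r+1/n$, $\sfd(y_n,y_m)=2r$, $u(y_n)=t>u(x)$: then $x\in\overline{B}_r(\{u\geq t\})\setminus\{u\geq t\}$ while $u_r^+(x)=u(x)<t$); the converse implication can also fail when the supremum equals $t$ without being attained. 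Hence the displayed Fubini ``identity'' is false as an equality, and even the inequality you actually need can fail for fixed $r$. The fix is routine: replace $u_r^{\pm}$ by the open-ball quantities $v_{r'}^{\pm}(x):=\sup_{B_{r'}(x)}u$, $\inf_{B_{r'}(x)}u$ with $r'=(1+\varepsilon)r$; then $\sfd(x,\{u\geq t\})\le r$ does imply $v_{r'}^+(x)\geq t$, the functions $v_{r'}^{\pm}$ are lower/upper semicontinuous (which also settles the Borel measurability of the set you feed to Fubini, a point you assert but do not justify for the closed-ball suprema), and the extra factor $(1+\varepsilon)$ disappears after the limits in $r$ and then $\varepsilon$. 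With this adjustment both inequalities follow exactly with the constants in the statement.
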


\begin{proof}
    The first inequality \eqref{eq:coareain3_AGD} is exactly \cite[Lemma 3.2]{AmbDiMarGig17}. For the second one we remark that the version of the Minkowski content used in \cite{KorteLahti2014} is the following:
    $$\mm_{\text{KL}}^+(A) = \limi_{r\to 0} \frac{\mm(B_r(A))}{2r}.$$
    It is clear that $\mm^{+,1}(A) \leq 2\mm_{\text{KL}}^+(A)$, so \eqref{eq:coareain3_KL} follows directly from \cite[Prop.\ 3.5]{KorteLahti2014}.
\end{proof}

The next energy we introduce is the variational $p$-capacity.
\begin{definition}
    Let $(\X,\sfd,\mm)$ be a metric measure space and let $A\subseteq U$ be two subsets of $\X$ such that $U$ is open. Let $p\geq 1$. The variational $p$-capacity of the couple $(A,U)$ (see for instance \cite[§6.3]{BjornBjorn2011}) is
    \begin{equation}
    \label{eq:defin_variational_capacity}
    \textup{cap}_{\mm}^p(A,U) := \inf \left\{ \left(\int g^p\,\d \mm\right)^{\frac{1}{p}},\, g \in \text{UG}(u), \text{ spt}(u) \subset U, u\geq 1 \text{ on } A \right\}.
    \end{equation}
    The $p$-variational capacity of a subset $A\subseteq \X$ with respect to $\mm$ is
\begin{equation*}
    \textup{cap}^p_{\mm}(A) = \sup_{A \subset U, U \text{ open}} \textup{cap}^p_{\mm}(A, U).
\end{equation*}
\end{definition}
There are many possible variants of the variational capacity of a set. We decided for the goals of this manuscript to stick with the one above. By a classical truncation argument it is enough to take functions $u$ such that $\chi_A \leq u \leq 1$ in the right hand side of \eqref{eq:defin_variational_capacity}, see \cite[§6.3, p.161]{BjornBjorn2011}.
\begin{remark}
    \label{remark:capacity_compact_set}
    If $A \subseteq \X$ is compact then every open set $U$ containing $A$ contains also $B_r(A)$ for some small enough $r>0$. From the definition of variational capacity it follows that if $A\subseteq U_1 \subseteq U_2$, with $U_1,U_2$ open, then $\textup{cap}^p_{\mm}(A, U_1) \geq \textup{cap}^p_{\mm}(A, U_2)$. Therefore, if $A\subseteq \X$ is compact, then 
    $$\textup{cap}^p_{\mm}(A) = \sup_{r>0} \textup{cap}^p_{\mm}(A, B_r(A)).$$
\end{remark}
The variational capacity of a compact set is always bounded from above by the lower Minkowski content, while for $p=1$ it is bounded from below by the perimeter.
\begin{lemma}
    \label{lemma:perimeter<capacity<minkowski}
    Let $(\X,\sfd,\mm)$ be a metric measure space and let $A\subseteq \X$ be compact. Then 
    $$\textup{cap}^p_{\mm}(A) \le \mm^{+,p}(A)$$ for all $p\geq 1$. 
    Moreover 
    $${\rm Per}_\mm(A, \X) \leq \textup{cap}^1_{\mm}(A).$$
\end{lemma}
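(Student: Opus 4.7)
\emph{Plan.} For the first inequality, I will build, for each small $r>0$ and $0<\epsilon<r$, the distance cutoff
\[
u_{r,\epsilon}(x):=\max\!\Bigl(0,\,\min\!\bigl(1,\,\tfrac{r-\sfd(x,A)}{r-\epsilon}\bigr)\Bigr),
\]
which equals $1$ on $\overline{B}_\epsilon(A)\supseteq A$, vanishes outside $\overline{B}_r(A)\subseteq B_{2r}(A)$, and is $\tfrac{1}{r-\epsilon}$-Lipschitz. Being locally constant on $B_\epsilon(A)$ and outside $\overline{B}_r(A)$, its local Lipschitz constant $\lip u_{r,\epsilon}$, which is an upper gradient, satisfies $\lip u_{r,\epsilon}\le \tfrac{1}{r-\epsilon}\,\chi_{\overline{B}_r(A)\setminus A}$. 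Plugging $(u_{r,\epsilon},\lip u_{r,\epsilon})$ into the definition of $\textup{cap}^p_{\mm}(A,B_{2r}(A))$ and sending $\epsilon\to 0^+$ yields
\[
\textup{cap}^p_{\mm}(A,B_{2r}(A))\le\left(\frac{\mm(\overline{B}_r(A)\setminus A)}{r^p}\right)^{1/p}.
\]
By Remark \ref{remark:capacity_compact_set} the left-hand side converges to $\textup{cap}^p_{\mm}(A)$ as $r\to 0$, so taking $r\to 0$ along a subsequence realising the $\varliminf$ in $\mm^{+,p}(A)$ gives the bound.

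For the second inequality, I will feed near-optimal capacity test functions into the upper-gradient representation \eqref{eq:BV_upper_gradients} of the perimeter. Fix any bounded open $V\supseteq A$; because $\mathrm{spt}({\rm Per}_\mm(A,\cdot))\subseteq\partial A\subseteq A\subset V$ (Remark \ref{rem:properties_perimeter}(iv)), it suffices to bound ${\rm Per}_\mm(A,V)$. For each $i\in\N$ pick a pair $(u_i,g_i)$ admissible for $\textup{cap}^1_{\mm}(A,B_{1/i}(A))$ with $0\le u_i\le 1$ and $u_i\equiv 1$ on $A$ (legitimate thanks to the truncation remark after the capacity definition), $g_i\in\UG(u_i)$, and $\int g_i\,\d\mm\le \textup{cap}^1_{\mm}(A,B_{1/i}(A))+1/i$. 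Since $A$ is closed and bounded and $\mm$ is finite on $V$, continuity of measure gives $\mm(B_{1/i}(A)\setminus A)\to 0$; together with $\chi_A\le u_i\le \chi_{B_{1/i}(A)}$ this forces $u_i\to\chi_A$ in $L^1(V)$. Applying \eqref{eq:BV_upper_gradients} on $V$ then yields
\[
{\rm Per}_\mm(A,\X)={\rm Per}_\mm(A,V)\le \varliminf_i\int_V g_i\,\d\mm\le \lim_i\textup{cap}^1_{\mm}(A,B_{1/i}(A))=\textup{cap}^1_{\mm}(A).
\]

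The subtle point in Part 1 is the $\epsilon$-regularisation: the naive cutoff $\max(0,1-\sfd(\cdot,A)/r)$ can have positive local Lipschitz constant on $\partial A$, producing an extra $\mm(\partial A)/r^p$ on the right-hand side that need not be controlled by the Minkowski content; the inner plateau up to distance $\epsilon$ pushes the support of $\lip u_{r,\epsilon}$ strictly away from $A$ and removes this defect. Part 2 is conceptually straightforward once one views the capacity test functions on shrinking neighborhoods of $A$ as natural approximations of $\chi_A$; the only real verification is the $L^1(V)$-convergence, which relies only on $A$ being closed and $\mm(V)<\infty$.
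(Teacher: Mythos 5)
Your proof is correct and follows essentially the same route as the paper's: a distance cutoff supported in a small neighbourhood of $A$ for the capacity--Minkowski bound, and near-optimal capacity test functions fed into the upper-gradient representation \eqref{eq:BV_upper_gradients} for the perimeter--capacity bound. Your $\epsilon$-plateau regularisation in Part 1 is a genuine (if small) improvement: the paper uses the one-parameter cutoff $u_\tau=\max(1-\sfd_A(\cdot)/\tau r,0)$ and asserts that $\lip u_\tau$ vanishes off $\overline{B}_r(A)\setminus A$, which can fail on $\partial A\subseteq A$ and would add an uncontrolled term $\mm(\partial A)/(\tau r)^p$; your inner plateau removes exactly this defect. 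One point to tidy: keeping the outer $1/p$-th power from the definition \eqref{eq:defin_variational_capacity}, your Part 1 actually yields $\textup{cap}^p_{\mm}(A)\le(\mm^{+,p}(A))^{1/p}$, which is not the stated inequality when $p>1$ and $\mm^{+,p}(A)<1$; this discrepancy is inherited from the paper itself (its own proof silently drops the $1/p$-th power, i.e.\ treats $\textup{cap}^p$ as $\inf\int g^p\,\d\mm$), and is immaterial for the case $p=1$ used in the main theorem, but you should state which normalisation you are using.
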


\begin{proof}
    Let us start with the first inequality. By Remark \ref{remark:capacity_compact_set}, it is enough to estimate from above $\textup{cap}^p_{\mm}(A,B_r(A))$. We fix $\tau <1$ and we consider the function $u_\tau =\max(1-\sfd_A(\cdot)/\tau r,0)$. It is straightforward to check that $\lip u_\tau \le 1/\tau r$ on $\overline{B_{r}(A)}\setminus A$ and zero otherwise. Moreover $u_\tau$ has compact support in $B_r(A)$. Thus
    \begin{equation*}
        \textup{cap}^p_{\mm}(A,B_r(A)) \le \int (\lip u_\tau)^p\,\d \mm \le \frac{\mm(\overline{B_r(A)}\setminus A)}{\tau^pr^p}.
    \end{equation*}
    Taking the infimum over $\tau <1$, we get 
    $$\textup{cap}^p_{\mm}(A,B_r(A)) \leq \frac{\mm(\overline{B_r(A)}\setminus A)}{r^p}$$
    for every $r>0$. Taking the liminf as $r \to 0$, we conclude the proof of the first part of the statement.\\
    We now assume that $p=1$. We have $\textup{cap}^1_\mm(A) = \sup_{r>0}\textup{cap}^1_{\mm}(A,B_r(A))$. For every $r > 0$ let $g_r \in \text{UG}(u_r)$, with $u_r \colon \X \to \R$ such that spt$(u_r) \subset B_r(A)$ and $u_r = 1$ on $A$, be such that $\int g_r\,\d\mm \leq \textup{cap}^1_{\mm}(A,B_r(A)) + r$. We can choose $u_r = 1$ on $A$ as we recalled after the definition of the variational capacity. We observe that $u_r$ converges pointwise to $\chi_A$, because $A$ is closed. So $u_r \to \chi_A$ in $L^1(\X)$ by dominate convergence. Using Remark \ref{rem:properties_perimeter}.(iv) we get that ${\rm Per}_\mm(A,\X) = {\rm Per}_\mm(A,B_s(A))$ for every $s>0$. Thus, since $B_s(A)$ is open and bounded for $s>0$, we can use \eqref{eq:BV_upper_gradients} and have that ${\rm Per}_\mm(A,B_s(A)) \le \limi_{r\to 0} \int_\X g_r\,\d\mm$. Thus, we have
    $${\rm Per}_\mm(A,\X) = {\rm Per}_\mm(A,B_s(A)) \leq \limi_{r\to 0} \int_\X g_r\,\d\mm \leq \limi_{r\to 0} \textup{cap}^1_{\mm}(A,B_r(A)) + r = \textup{cap}^1_\mm(A),$$
    concluding the proof.
\end{proof}

The last energies we consider are defined through the approximate modulus.
\begin{definition}
Let $(\X,\sfd,\mm)$ be a metric measure space and let $A\subseteq \X$. We consider the family $\Gamma(A):=\{ \gamma \in C([0,1],\X):\, \gamma \cap A \neq \emptyset \}.$
Let $p \ge 1$. The approximate $p$-modulus of $A$ is 
$${\rm AM}_p(A,\mm):={\rm AM}_p(\Gamma(A),\mm).$$
We also introduce the family $\Gamma(A)^\pitchfork:=\{\gamma \in C([0,1],\X):\, \gamma_0 \in I_A, \gamma_1\in O_A \}.$
The transversal approximate $p$-modulus of $A$ is
$${\rm AM}^\pitchfork_p(A,\mm):={\rm AM}_p(\Gamma(A)^\pitchfork,\mm).$$
\end{definition}

The approximate moduli above can be computed with arbitrarily short curves.
\begin{lemma}
    \label{lemma:AM_localized_near_boundary}
    Let $(\X,\sfd,\mm)$ be a metric measure space, let $A\subseteq \X$ and let $\delta > 0$. Define the following families of curves:
    $$\Gamma(A)_\delta := \lbrace \gamma \in \Gamma(A) : \ell(\gamma)\leq \delta \rbrace \subseteq \Gamma(A);$$
    $$\Gamma(A)^\pitchfork_\delta = \lbrace \gamma \in \Gamma(A)^\pitchfork : \ell(\gamma)\leq \delta \rbrace \subseteq \Gamma(A)^\pitchfork.$$
    Then
    $${\rm AM}_p(A,\mm) = {\rm AM}_p(\Gamma(A)_\delta, \mm), \quad {\rm AM}_p^\pitchfork(A,\mm) = {\rm AM}_p(\Gamma(A)^\pitchfork_\delta, \mm).$$
\end{lemma}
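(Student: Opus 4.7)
The plan is to prove each of the two identities by two-sided comparison. The $\le$ inequalities, i.e.\ ${\rm AM}_p(\Gamma(A)_\delta,\mm)\le{\rm AM}_p(A,\mm)$ and ${\rm AM}_p(\Gamma(A)^\pitchfork_\delta,\mm)\le{\rm AM}_p^\pitchfork(A,\mm)$, are immediate: since $\Gamma(A)_\delta\subseteq\Gamma(A)$ and $\Gamma(A)^\pitchfork_\delta\subseteq\Gamma(A)^\pitchfork$, any sequence that is admissible on the larger family is admissible on the smaller one, so the infimum in the definition of ${\rm AM}_p$ over the smaller family is no larger than that over the larger one.

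For the reverse $\ge$ direction, the strategy is to show that any admissible sequence $\{\rho_j\}$ for $\Gamma_\delta$ (where $\Gamma$ denotes either $\Gamma(A)$ or $\Gamma(A)^\pitchfork$) is already admissible for $\Gamma$ itself. Fix such a sequence and an AC curve $\gamma\in\Gamma$ (non-AC curves can be ignored since the line integral is defined only on ${\rm AC}([0,1],\X)$). I will exhibit a subinterval $[a,b]\subseteq[0,1]$ such that the reparameterization of $\gamma|_{[a,b]}$ on $[0,1]$ belongs to $\Gamma_\delta$. Then by nonnegativity of $\rho_j$,
\begin{equation*}
\int_\gamma\rho_j\,\d s\ \ge\ \int_{\gamma|_{[a,b]}}\rho_j\,\d s,
\end{equation*}
and taking $\liminf_j$ on the right gives $\ge 1$ by the assumed admissibility on $\Gamma_\delta$, so the same bound transfers to $\gamma$. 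Passing to the infimum yields ${\rm AM}_p(\Gamma,\mm)\le{\rm AM}_p(\Gamma_\delta,\mm)$.

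For the first identity the construction of $[a,b]$ is routine: pick $t_0\in[0,1]$ with $\gamma_{t_0}\in A$ and use continuity of the arclength function $s(t):=\int_0^t|\dot{\gamma}_\tau|\,\d\tau$ on the compact interval $[0,1]$ to select $a\le t_0\le b$ with $s(b)-s(a)\le\delta$. The reparameterization then meets $A$ at (the image of) $t_0$ and has length $\le\delta$, so it lies in $\Gamma(A)_\delta$.

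The hard part will be the transversal identity, where $[a,b]$ must additionally satisfy $\gamma_a\in I_A$ and $\gamma_b\in O_A$. My plan is to set $T:=\sup\{t\in[0,1]:\gamma_t\in I_A\}$ (well defined since $0$ lies in this set and $1$ does not, as $I_A\cap O_A=\emptyset$) and to distinguish cases according to whether $\gamma_T\in I_A$ and whether $O_A$ is hit by a sequence $t_n\downarrow T$ (with a symmetric analysis for $t_n\uparrow T$). In each favourable case, continuity of $s$ produces pairs $(a_n,b_n)$ with $\gamma_{a_n}\in I_A$, $\gamma_{b_n}\in O_A$ and $s(b_n)-s(a_n)\to 0$, so one selects $n$ large enough. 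The delicate scenario to treat carefully---and the step I expect to be the main obstacle---is when $\gamma$ spends a whole arclength interval inside the essential boundary $\partial^eA$ separating its last visit to $I_A$ and its first visit to $O_A$; resolving this case requires a finer refinement that exploits the density-point characterizations of $I_A$ and $O_A$ together with the continuity of $\gamma$ to still extract a transverse short subcurve.
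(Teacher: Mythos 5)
Your treatment of the first identity is complete and coincides with the paper's argument: monotonicity from the inclusion $\Gamma(A)_\delta\subseteq\Gamma(A)$, plus the observation that every curve meeting $A$ contains a subcurve of length at most $\delta$ still meeting $A$, so any admissible sequence for $\Gamma(A)_\delta$ is admissible for $\Gamma(A)$.

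The transversal identity is where your proposal stops, and the gap is genuine: you reduce everything to the claim that every $\gamma\in\Gamma(A)^\pitchfork$ contains a subcurve of length at most $\delta$ with initial point in $I_A$ and final point in $O_A$, and you leave open exactly the case in which $\gamma$ crosses $\partial^e A$ along an arc of positive length, promising a ``finer refinement'' via density points. No refinement of that kind can close the argument, because the required subcurve may simply fail to exist. Take $\X=\R^2$ with the Euclidean distance and the Lebesgue measure, and
\begin{equation*}
A=\{(x_1,x_2):\ x_2\le 0\}\ \cup\ \{(x_1,x_2):\ x_1\le 0,\ 0\le x_2\le 1\}.
\end{equation*}
The vertical segment $\gamma$ from $(0,-1)$ to $(0,2)$ has $\gamma_0\in I_A$ and $\gamma_1\in O_A$, but every point of it with second coordinate in $[0,1]$ lies in $\partial^e A$ (the density of $A$ there is $1/2$, $3/4$ or $1/4$). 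Hence any subcurve of $\gamma$ with endpoints in $I_A$ and $O_A$ has length greater than $1$, so for $\delta<1$ no subcurve of $\gamma$ belongs to $\Gamma(A)^\pitchfork_\delta$, and the comparison $\int_\gamma\rho_j\,\d s\ge\int_{\gamma|_{[a,b]}}\rho_j\,\d s$ has nothing admissible on its right-hand side. For such curves the transfer of admissibility must be obtained by a different mechanism (for instance by showing that the curves of $\Gamma(A)^\pitchfork$ admitting no short transversal subcurve form an ${\rm AM}_p$-negligible family, or by some other argument), and your proposal contains no such idea. To be fair, the paper's own proof dismisses the transversal case with a single ``similarly,'' i.e.\ it asserts precisely the subcurve property you could not verify, so your hesitation points at a real subtlety; but as written your argument proves only ${\rm AM}_p(A,\mm)={\rm AM}_p(\Gamma(A)_\delta,\mm)$ together with the easy inequality ${\rm AM}_p(\Gamma(A)^\pitchfork_\delta,\mm)\le{\rm AM}^\pitchfork_p(A,\mm)$, and not the second identity of the lemma.
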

\begin{proof}
    The inclusions $\Gamma(A)_\delta \subseteq \Gamma(A)$ and $\Gamma(A)^\pitchfork_\delta \subseteq \Gamma(A)^\pitchfork$ give ${\rm AM}_p(\Gamma(A)_\delta, \mm) \leq {\rm AM}_p(A,\mm)$ and ${\rm AM}_p(\Gamma(A)^\pitchfork_\delta, \mm) \leq {\rm AM}_p^\pitchfork(A,\mm)$. On the other hand for every $\gamma \in \Gamma(A)$ we can find a subcurve $\gamma_\delta \in \Gamma(A)_\delta$. Therefore an admissible sequence for $\Gamma(A)_\delta$ is admissible also for $\Gamma(A)$. This implies ${\rm AM}_p(A,\mm) \leq {\rm AM}_p(\Gamma(A)_\delta, \mm)$ and similarly ${\rm AM}^\pitchfork_p(A,\mm) \leq {\rm AM}_p(\Gamma(A)^\pitchfork_\delta, \mm)$.
\end{proof}

The approximate modulus is bounded from above by the $p$-Minkowski content when the set has measure zero.
\begin{lemma}
\label{lemma:approximate_modulus_implies_minkowski}
    Let $(\X,\sfd,\mm)$ be a metric measure space and let $p \ge 1$. Let $A\subseteq \X$ be such that $\mm(A)=0$. Then 
    $${\rm AM}_p(A,\mm) \leq \mm^{+,p}(A).$$
\end{lemma}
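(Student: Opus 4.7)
The plan is to construct an explicit admissible sequence of densities $\{\rho_j\}$ for $\Gamma(A)$ realizing the Minkowski bound. I would pick a sequence $r_j \searrow 0$ attaining the liminf in $\mm^{+,p}(A)$, i.e.\ $\lim_j r_j^{-p}\,\mm(\overline{B}_{r_j}(A)\setminus A) = \mm^{+,p}(A)$, and set $\rho_j := r_j^{-1}\,\chi_{\overline{B}_{r_j}(A)}$. Because $\mm(A)=0$, the $p$-energy is
\[
    \int \rho_j^p\,\d \mm \;=\; r_j^{-p}\,\mm(\overline{B}_{r_j}(A)) \;=\; r_j^{-p}\,\mm(\overline{B}_{r_j}(A)\setminus A),
\]
whose liminf is exactly $\mm^{+,p}(A)$; once admissibility is established, the inequality ${\rm AM}_p(A,\mm) \leq \mm^{+,p}(A)$ follows.

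The core step is to verify $\{\rho_j\} \in \text{Adm-seq}(\Gamma(A))$, i.e.\ $\liminf_j \int_\gamma \rho_j\,\d s \geq 1$ for every non-constant absolutely continuous $\gamma \in \Gamma(A)$. Fix such a $\gamma$ and $t_0 \in [0,1]$ with $\gamma_{t_0} \in A$. Using $\overline{B}_r(A) = \overline{B}_r(\overline{A})$, I distinguish two cases. If $\gamma([0,1]) \subseteq \overline{A}$, then $\gamma_t \in \overline{B}_{r_j}(A)$ for every $t$ and every $j$, so $\int_\gamma \rho_j\,\d s = \ell(\gamma)/r_j \to \infty$. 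Otherwise choose $s$ with $\delta := \sfd(\gamma_s,\overline{A}) > 0$; for every $j$ with $r_j < \delta$, the intermediate value theorem applied to the continuous map $t \mapsto \sfd(\gamma_t,A)$ on the subinterval joining $s$ and $t_0$ yields a time $t_1$ (take the last one before $t_0$ with $\sfd(\gamma_{t_1},A) = r_j$) such that the arc $\gamma|_{[t_1,t_0]}$ lies entirely in $\overline{B}_{r_j}(A)$. Since $\gamma_{t_0} \in A$,
\[
    \int_\gamma \rho_j\,\d s \;\geq\; \frac{\ell(\gamma|_{[t_1,t_0]})}{r_j} \;\geq\; \frac{\sfd(\gamma_{t_1},\gamma_{t_0})}{r_j} \;\geq\; \frac{\sfd(\gamma_{t_1},A)}{r_j} \;=\; 1.
\]

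The only substantive step is this admissibility verification; the geometric content is that any curve reaching $A$ from a point at positive distance from $\overline{A}$ must traverse a segment of length at least $r_j$ inside the closed $r_j$-tube, since any entry point to $\overline{B}_{r_j}(A)$ lies at distance exactly $r_j$ from $A$. The density $\rho_j = r_j^{-1}\chi_{\overline{B}_{r_j}(A)}$ is crafted precisely to convert this length lower bound into the admissibility bound $\int_\gamma \rho_j\,\d s \geq 1$. The remaining pieces (the energy identity, the simplification via $\mm(A)=0$, and taking the liminf) are routine.
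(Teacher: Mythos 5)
Your proof is correct and follows essentially the same route as the paper: the paper also takes $\rho_j = r_j^{-1}\chi_{B_{r_j}(A)}$ along a sequence $r_j \to 0$, checks admissibility by the same ``a curve hitting $A$ must travel length at least $r_j$ inside the $r_j$-tube'' argument, and then uses $\mm(A)=0$ to identify the energies with the Minkowski quotients. Your admissibility verification (via the intermediate value theorem and the choice of the last entry time) is a more detailed writing of the step the paper treats briefly, and using the closed tube versus the open one is immaterial.
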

\begin{proof}
    Let $r_j > 0$ be a generic decreasing sequence converging to $0$. We consider the functions $\rho_j = \frac{1}{r_j}\chi_{B_{r_j}(A)}$. This sequence is admissible for $\Gamma(A)$. Indeed, if $\gamma \in \Gamma(A)$ is not constant, then its image has diameter at least $2r_{j_0}$ for some fixed $j_0$. For every $j\geq j_0$ we have $\ell(\gamma \cap B_{r_j}(A)) \geq r_j$, since $\gamma \cap A \neq \emptyset$. Then $\limi_{j\to + \infty} \int_{\gamma} \rho_j \geq 1$. Therefore
    $${\rm AM}_p(A,\mm) \leq \limi_{j\to +\infty} \int \rho_j^p \,\d\mm = \limi_{j\to +\infty} \frac{\mm(B_{r_j}(A))}{r_j^p}.$$
    By the arbitrariness of the sequence $r_j$ and the fact that $\mm(A) = 0$ we get
    $${\rm AM}_p(A,\mm) \leq \mm^{+,p}(A).$$
\end{proof}
We conclude this section with an easy but useful remark.
\begin{remark}
\label{rem:biLipschitz_change}
Let $\sfd,\sfd'$ be two equivalent metrics on a set $\X$, i.e. there exists a constant $C_0 \geq 1$ such that
\begin{equation}
    C_0^{-1}\sfd \le  \sfd' \le C_0 \sfd,
\end{equation}
and let $\mm$ be a Borel measure on $\X$. Then
\begin{itemize}
    \item[(i)] if $(\X,\sfd,\mm)$ is $C_D$-doubling then $(\X,\sfd',\mm)$ is $C_D^{1+2\log_2C_0}$-doubling.
    \item[(ii)] for every set $A\subseteq \X$ and every $p\geq 1$ it holds
    \begin{equation*}
        \begin{aligned}
         &C_0^{-1}\cdot {\rm Per}_{\mm, \sfd}(A)\le {\rm Per}_{\mm, \sfd'}(A) \le C_0\cdot{\rm Per}_{\mm, \sfd}(A).\\
         &C_0^{-p}\cdot \codH{p}_{\mm, \sfd}(A) \le \codH{p}_{\mm, \sfd'}(A) \le C_0^p\cdot\codH{p}_{\mm, \sfd}(A).\\
         &C_0^{-p}\cdot \mm^{+,p}_{\mm, \sfd}(A) \le \mm^{+,p}_{\mm, \sfd'}(A) \le C_0^p\cdot\mm^{+,p}_{\mm,\sfd}(A).\\
         &C_0^{-p}\cdot {\rm cap}^{p}_{\mm,\sfd}(A) \le {\rm cap}^{p}_{\mm, \sfd'}(A) \le C_0^p\cdot{\rm cap}^{p}_{\mm,\sfd}(A).\\
         &C_0^{-p}\cdot {\rm AM}_{p,\sfd}(A,\mm) \le {\rm AM}_{p,\sfd'}(A,\mm) \le C_0^p\cdot {\rm AM}_{p,\sfd}(A,\mm),\\
        \end{aligned}
    \end{equation*}
\end{itemize}
The notations used here are self explaining.
\end{remark}

\section{Relations between the energies under Poincaré inequality}
\label{sec:relations_under_PI}
There are more connections between the several energies we introduced in the last section when the metric measure space $(\X,\sfd,\mm)$ is doubling and satisfies a $1$-Poincaré inequality. The first one is the representation formula for the perimeter, as proven in \cite[Theorem 4.2]{Amb02} and \cite[Theorem 4.6]{AmbrosioMirandaPallara2004}. The proposition, which is stated in a local version, is already present in the literature, but we are not able to find a proof. We include it for completeness.
\begin{proposition}
\label{prop:representation_perimeter_ambrosio}
    Let $(\X,\sfd,\mm)$ be $1$-{\rm PI} space. Let $U \subseteq \X$ be open and $A$ be a Borel set such that ${\rm Per}_\mm(A,U) < \infty$. Then
    \begin{equation}
    \label{eq:perimeter_representation}
        {\rm Per}_\mm(A,\cdot)=\theta \,\codH{1}_\mm\restr{\partial^e A \cap U},
    \end{equation}
    as measures on $U$, where $\theta\colon U\to [c_1,C_D]$ and $c_1 = c_1(C_D,C_P,\lambda) > 0$.
\end{proposition}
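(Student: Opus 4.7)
The strategy is to reduce to the already-known global version of this representation, namely the same formula under the stronger hypothesis ${\rm Per}_\mm(A,\X)<\infty$ and $U=\X$, which is the content of \cite[Thm.\ 4.6]{AmbrosioMirandaPallara2004}. In that global formulation the upper bound $\theta\le C_D$ is a direct consequence of the doubling property, and the lower bound $\theta\ge c_1=c_1(C_D,C_P,\lambda)>0$ is extracted from the relative isoperimetric inequality granted by the $1$-Poincar\'e inequality. My task is therefore purely to localize this global result.

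The key step is the truncation of $A$ to a ball on which it has finite total perimeter. I fix $x_0\in U$, choose $R>0$ with $\overline{B_R(x_0)}\subset B_{2R}(x_0)\subset U$, and further arrange ${\rm Per}_\mm(B_R(x_0),\X)<\infty$ thanks to Lemma \ref{lemma:balls_have_finite_perimeter}. Setting $\tilde A:=A\cap B_R(x_0)$, I decompose $\X$ into the open set $B_R(x_0)$, the Borel set $\partial B_R(x_0)\subset U$ and the open set $\X\setminus\overline{B_R(x_0)}$, and estimate the contribution of each piece to ${\rm Per}_\mm(\tilde A,\X)$ separately: by locality the first equals ${\rm Per}_\mm(A,B_R(x_0))\le{\rm Per}_\mm(A,U)$; by the subadditivity in Remark \ref{rem:properties_perimeter}(ii) the second is at most ${\rm Per}_\mm(A,\partial B_R(x_0))+{\rm Per}_\mm(B_R(x_0),\partial B_R(x_0))$, which is finite since $\partial B_R(x_0)\subset U$ and by the choice of $R$; the third vanishes by Remark \ref{rem:properties_perimeter}(iv), since $\tilde A$ is empty on the open set $\X\setminus\overline{B_R(x_0)}$. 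Hence ${\rm Per}_\mm(\tilde A,\X)<\infty$, and the global representation formula applies to $\tilde A$.

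It then remains to transfer the formula back to $A$ and to piece together the local identifications. Since $A=\tilde A$ on the open set $B_R(x_0)$, Remark \ref{rem:properties_perimeter}(i) yields ${\rm Per}_\mm(A,\cdot)={\rm Per}_\mm(\tilde A,\cdot)$ as measures on $B_R(x_0)$, and directly from the definitions $\partial^e A\cap B_R(x_0)=\partial^e\tilde A\cap B_R(x_0)$. This produces the claimed representation on $B_R(x_0)$ with a density in $[c_1,C_D]$. Covering $U$ by countably many such balls and exploiting the almost-everywhere uniqueness of the Radon-Nikodym derivative to reconcile the densities on overlaps glues them into a single $\theta\colon U\to[c_1,C_D]$ giving the identity on all of $U$. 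The only delicate point is the verification that the truncation $\tilde A$ has finite total perimeter on $\X$: a naive application of subadditivity is unavailable because ${\rm Per}_\mm(A,\X)$ may well be infinite, so the three-piece decomposition and the well-chosen finite-perimeter ball $B_R(x_0)$ are essential.
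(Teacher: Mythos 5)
Your proof is correct and rests on the same core idea as the paper's — truncate $A$ by a well-chosen finite-perimeter set so that the global representation theorem of Ambrosio (resp.\ Ambrosio--Miranda--Pallara) applies, then glue the local densities — but the execution differs in two genuine ways. The paper truncates along a nested exhaustion $U_n=\{\sfd(\cdot,U^c)>t_n\}$, with $t_n$ chosen so that $U_n$ has finite perimeter, and then spends most of the proof (Steps 1--5) reconciling the densities $\theta_n$ via asymptotic doubling of the perimeter measure, the Vitali covering property and Lebesgue differentiation, before extending the identity from nice open sets to all Borel sets by monotone class. You instead truncate to finite-perimeter balls $B_R(x_0)$ covering $U$ (Lemma \ref{lemma:balls_have_finite_perimeter} guarantees such radii) and reconcile overlapping densities by a.e.\ uniqueness of the Radon--Nikodym derivative, which is a noticeably lighter gluing mechanism; the price is only a handful of routine verifications you should make explicit: that ${\rm Per}_\mm(A,\cdot)$ and ${\rm Per}_\mm(\tilde A,\cdot)$, agreeing on all open subsets of $B_R(x_0)$ by locality, agree on all Borel subsets (both are Borel measures, finite on $B_R(x_0)$, so a $\pi$-system argument closes this); that the reference measure $\codH{1}_\mm\restr{\partial^e A\cap B_R(x_0)}$ is finite on each ball — which follows from $\theta\ge c_1$ and ${\rm Per}_\mm(A,B_R(x_0))\le {\rm Per}_\mm(A,U)<\infty$ — so Radon--Nikodym uniqueness indeed applies; and that separability of $\X$ gives a countable subcover, after which defining $\theta:=\theta_i$ on $B^{(i)}\setminus\bigcup_{j<i}B^{(j)}$ yields a Borel density with values in $[c_1,C_D]$ and countable additivity of the perimeter measure gives the identity on all of $U$. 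Also note your three-piece estimate of ${\rm Per}_\mm(\tilde A,\X)$ could be streamlined exactly as in the paper: since $\partial\tilde A\subseteq \overline{B}_R(x_0)\subset U$, the support property gives ${\rm Per}_\mm(\tilde A,\X)={\rm Per}_\mm(\tilde A,U)\le {\rm Per}_\mm(A,U)+{\rm Per}_\mm(B_R(x_0),U)<\infty$, with no need to treat the sphere separately. In short: a correct proof, same reduction, different (and arguably simpler) gluing.
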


\begin{proof}
    Let us consider $f(x):=\sfd(x,U^c)$, that is $1$-Lipschitz. Applying the coarea inequality \eqref{eq:coarea_inequality_BV} we have that $\{ f > t\}$ has finite perimeter for a.e.\ $t \in \mathbb{R}$. We take a sequence $t_n \to 0$ for which $U_n :=\{ f > t_n\}$ has finite perimeter. By Remark \ref{rem:properties_perimeter}.(iii) we have 
    \begin{equation*}
        {\rm Per}_{\mm}(U_n \cap A,U) \le {\rm Per}_{\mm}(U_n,U) + {\rm Per}_{\mm}(A,U), 
    \end{equation*}
    thus ${\rm Per}_{\mm}(U_n \cap A,U)< \infty$. Indeed the measure ${\rm Per}_{\mm}(U_n,\cdot)$ is supported on $\partial U_n \subset U$ by Remark \ref{rem:properties_perimeter}.(v), so ${\rm Per}_{\mm}(U_n,U) = {\rm Per}_{\mm}(U_n,\X) < \infty$. For the same reason we have ${\rm Per}_{\mm}(U_n \cap A,\X)={\rm Per}_{\mm}(U_n \cap A,U) < \infty$. We are in position to apply \cite[Theorem 4.2]{Amb02}. So there exist a constant $c_1 = c_1(C_D,C_P,\lambda)$ and Borel functions $\theta_n \colon \X \to [c_1,C_D]$ such that 
    ${\rm Per}_\mm(A\cap U_n,\cdot)=\theta_n \codH{1}_\mm\restr{\partial^e (A \cap U_n)}$. We denote $\codH{1}_\mm\restr{\partial^e (A \cap U_n)}$ by $\mu_n$.\\
    {\bf Step 1:} $\mu_n(B) = \mu_m(B)$ if $m\geq n$ and $B \subseteq U_n$. \\
    This follows from the equalities
    \begin{equation}
        \label{eq:mu_n=mu_m}
        \mu_n (B) = \codH{1}_\mm(\partial^e A \cap B) = \mu_m(B)
    \end{equation}
    that we now justify. By \cite[Proposition 1.16]{BonPasqRajala2020}, where the finite perimeter assumption is not actually used, we get $\partial^e(A\cap U_n) \subseteq \partial^e A \cup \partial^e U_n$ and similarly for $m$. Therefore
    $$\mu_n(B) \leq \codH{1}_\mm(\partial^e A \cap B) + \codH{1}_\mm(\partial^e U_n \cap B) = \codH{1}_\mm(\partial^e A \cap B),$$
    because $B\cap \partial U_n = \emptyset$, and similarly for $m$. On the other hand we have
    $$\partial^e A = (\partial^e A \cap U_n) \cup (\partial^e A \cap U_n^c) \subseteq \partial^e (A \cap U_n) \cup (\partial^e A \cap U_n^c),$$
    where the last containment follows from the fact that if $x\in U_n$ then 
    $$\lims_{r\to 0} \frac{\mm(B_r(x) \cap A)}{\mm(B_r(x))} = \lims_{r\to 0} \frac{\mm(B_r(x) \cap (A \cap U_n))}{\mm(B_r(x))},\, \lims_{r\to 0} \frac{\mm(B_r(x) \setminus A)}{\mm(B_r(x))} = \lims_{r\to 0} \frac{\mm(B_r(x) \setminus (A \cap U_n))}{\mm(B_r(x))}$$
    because $U_n$ is open, and similarly for $m$. Thus
    $$\codH{1}_\mm(\partial^eA \cap B) \leq \codH{1}_\mm(\partial^e (A \cap U_n) \cap B) + \codH{1}_\mm(\partial^e A \cap U_n^c \cap B) = \mu_n(B)$$
    and similarly for $m$.\\
    {\bf Step 2:} for every $n$ there exists $V_n \subseteq U_n$ such that $\mu_m(V_n) = 0$ and every point of $U_n\setminus V_n$ is a Lebesgue point of $\theta_m$ with respect to the measure $\mu_m$, for every $m> n$.\\
    By \cite[Corollary 4.5]{Amb02}
    ${\rm Per}_\mm(A \cap U_m,\cdot)$ is asymptotically doubling and therefore $\mu_m$ is asymptotically doubling as well. This implies that the metric measure space $(\X,\sfd,\mu_m)$ is Vitali (cp. \cite[Theorem 3.4.3]{HKST15}). 
    By \cite[§3.4]{HKST15} the Lebesgue differentiation theorem holds on $(\X,\sfd,\mu_m)$. We apply it to $U_n$: we can find a Borel set $N_m \subseteq U_n$ with $\mu_m(N_m)=0$ and such that every point of $U_n \setminus N_m$ is a Lebesgue point of $\theta_m$ with respect to the measure $\mu_m$. We define $V_n := \bigcup_{m>n} N_m$ and we compute, for $m>n$,
    $$\mu_m(V_n) \leq \sum_{\ell > n} \mu_m(N_\ell) = \sum_{\ell > n} \mu_\ell(N_\ell) = 0,$$
    where for the first equality we applied Step 1.\\
    {\bf Step 3:} $\theta_m(x) = \theta_\ell(x)$ for every $x\in U_n\setminus V_n$ and every $m,\ell > n$.\\
    We apply Step 2 to get
    \begin{equation*}
        \begin{aligned}
            \theta_m(x) = \lim_{r \to 0} \dashint_{B_r(x)} \theta_m \,\d \mu_m &= \lim_{r \to 0} \frac{1}{\mu_m(B_r(x))} \int_{B_r(x)} \theta_m \,\d \mu_m \\
            &= \lim_{r \to 0} \frac{1}{\mu_m(B_r(x))}{\rm Per}_\mm(A\cap U_m, B_r(x)) \\
            &= \lim_{r \to 0} \frac{1}{\mu_m(B_r(x))}{\rm Per}_\mm(A,B_r(x))
        \end{aligned}
    \end{equation*}
    where we used Remark \ref{rem:properties_perimeter}.(i) and the fact that $B_r(x) \subseteq U_m$ if $r$ is small enough. The same computation holds for $\ell$. Moreover $B_r(x)$ is also included in $U_n$ if $r$ is small enough, so $\mu_m(B_r(x)) = \mu_\ell(B_r(x))$ if $r$ is small enough and $m,\ell > n$, by Step 1. This implies that $\theta_m(x) = \theta_\ell(x)$. 
    \vspace{1mm}

    \noindent Let us fix some additional notation. We set $\mu := \codH{1}_\mm\restr{(\partial^e A) \cap U}$ and $V:= \bigcup_n V_n$. For $x\in U \setminus V$ we set $\theta(x) := \theta_m(x)$, where $m > n$ and $x\in U_n$.\\
    {\bf Step 4:} $\mu(V) = 0$ and $\theta$ is well defined on $U\setminus V$, defining a Borel function on $U$ with values on $[c_1,C_D]$.\\
    We have
    $$\mu(V) = \codH{1}_\mm \left( \bigcup_n \partial^e A \cap V_n\right) \leq \sum_n \codH{1}_\mm( \partial^e A \cap V_n) = \sum_n \mu_{n+1}(V_n) = 0,$$
    where in the second to last equality we used \eqref{eq:mu_n=mu_m}. We need to show that $\theta$ is well defined. We fix $x\in U \setminus V$. By definition $x$ belongs to $U_n \setminus V_n$ for some $n$. Step 3 says that the quantity $\theta_m(x)$ does not depend on the choice of $m > n$. Moreover suppose that $x$ belongs to $U_n \setminus V_n$ and $U_m \setminus V_m$. Choosing $\ell > m,n$ we get that the assignment $\theta(x) = \theta_\ell(x)$ is well defined and does not depend on $n$ and $m$, so $\theta$ is well defined on $U\setminus V$. 
    By definition $c_1\leq \theta \leq C_D$ because each $\theta_n$ has this property. Finally it is not difficult to prove that $\theta$ is Borel using that each $\theta_n$ is Borel.\\
    {\bf Step 5:} 
    ${\rm Per}_\mm(A\cap U, B) = \theta\,\codH{1}_\mm\restr{\partial^e A \cap U}(B)$ for every $B\subseteq U$.\\
    First of all we prove the statement for an open set $B$ such that $\sfd(B,U^c) > 0$, so that $B\subseteq U_n$ if $n$ is big enough. Using that $\theta(x) = \theta_n(x)$ for $\mu$ and $\mu_n$ almost every $x\in B$ and that $\mu = \mu_n$ on $B$ by \eqref{eq:mu_n=mu_m} we get
    $$(\theta\mu)(B) = \int_{B}\theta\,\d\mu = \int_{B}\theta_n\,\d\mu_n = {\rm Per}_\mm(A\cap U_n, B) = {\rm Per}_\mm(A, B).$$
    The last equality follows by Remark \ref{rem:properties_perimeter}.(i). For a generic open $B\subseteq U$ we define $B_n = B\cap U_n$ and we observe that $B$ is the increasing union of the sets $B_n$. Since both $\mu$ and ${\rm Per}_\mm(A,\cdot)$ are Borel measures we get
    $$(\theta\mu)(B) = \lim_{n\to +\infty} (\theta\mu)(B_n) = \lim_{n\to +\infty}{\rm Per}_\mm(A, B_n) = {\rm Per}_\mm(A, B).$$
    An application of monotone class theorem gives that $(\theta\mu)(B) ={\rm Per}_\mm(A, B)$ for every Borel set $B \subseteq U$.
\end{proof}

Given a set, it is possible to check the fact that it has finite perimeter by checking the finiteness of the codimension-$1$ Hausdorff measure of the essential boundary: this is the content of \cite[Theorem 1.1]{Lahti2020}. We extend the characterization by using also the approximate modulus. We specialize also to the case of separating sets.

\begin{proposition}
\label{prop:finite_perimeter_equivalences}
    Let $(\X,\sfd,\mm)$ be a $1$-{\rm PI} space.
    Let $U\subseteq \X$ be an open set and $A \subseteq \X$ be a Borel set. Then the following are equivalent:
    \begin{itemize}
        \item[(i)] ${\rm Per}_\mm(A,U) < \infty$;
        \item[(ii)] $\codH{1}_\mm(\partial^e A \cap U)< \infty$;
        \item[(iii)] ${\rm AM}_1(\partial^e A \cap U, \mm) < \infty$.
    \end{itemize}
     If $x,y\in \X$, $L\geq 1$, $\min\lbrace \sfd(x,\partial^e A), \sfd(y,\partial^e A)\rbrace > 0$, $U\subseteq \overline{B}_{x,y}^L$ then (i)-(iii) are equivalent to
     \begin{itemize}
         \item[(iv)] $\codH{1}_{\mm_{x,y}^L}(\partial^e A \cap U)< \infty$;
         \item[(v)] ${\rm AM}_1(\partial^e A \cap U, \mm_{x,y}^L) < \infty$.
     \end{itemize}
\end{proposition}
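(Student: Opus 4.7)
The plan is to close the loop (i)$\Rightarrow$(ii)$\Rightarrow$(iii)$\Rightarrow$(i) in the general setting, and then to obtain (ii)$\Leftrightarrow$(iv) and (iii)$\Leftrightarrow$(v) under the additional hypotheses on $x,y,U$ by a direct comparison between $\mm$ and $\mm_{x,y}^L$. The implication (i)$\Rightarrow$(ii) reads off Proposition \ref{prop:representation_perimeter_ambrosio}: from ${\rm Per}_\mm(A,U)=\int\theta\,\d\codH{1}_\mm\restr{\partial^eA\cap U}$ with $\theta\ge c_1>0$ I get $\codH{1}_\mm(\partial^eA\cap U)\le c_1^{-1}\,{\rm Per}_\mm(A,U)<\infty$. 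For the reverse (ii)$\Rightarrow$(i) I invoke Lahti's theorem \cite[Theorem 1.1]{Lahti2020}, which is exactly the content quoted in the discussion preceding the statement.

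For (ii)$\Rightarrow$(iii) I plan a covering construction. For each $j\in\N$, I pick a cover $\partial^eA\cap U\subseteq\bigcup_i B_{r_i^{(j)}}(x_i^{(j)})$ with $r_i^{(j)}<1/j$ and $\sum_i\mm(B_{r_i^{(j)}}(x_i^{(j)}))/r_i^{(j)}\le\codH{1}_\mm(\partial^eA\cap U)+1$, then set $\rho_j:=2\sum_i(r_i^{(j)})^{-1}\chi_{B_{2r_i^{(j)}}(x_i^{(j)})}$. An elementary length estimate shows that a non-constant $\gamma\in\Gamma(\partial^eA\cap U)$ passing through $p\in B_{r_i^{(j)}}(x_i^{(j)})$ traverses length at least $r_i^{(j)}$ inside $B_{2r_i^{(j)}}(x_i^{(j)})$ as soon as $2r_i^{(j)}<\ell(\gamma)$, so $\limi_j\int_\gamma\rho_j\,\d s\ge 2$ and $\{\rho_j\}$ is admissible. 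Doubling then yields $\int\rho_j\,\d\mm\le 2C_D(\codH{1}_\mm(\partial^eA\cap U)+1)$, so ${\rm AM}_1(\partial^eA\cap U,\mm)<\infty$.

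The new step is the converse (iii)$\Rightarrow$(i). Starting from an admissible $\{\rho_j\}$ with $\int\rho_j\,\d\mm\le{\rm AM}_1(\partial^eA\cap U,\mm)+1$, I set
\[
u_j(x):=\min\Bigl(1,\inf_\gamma\int_\gamma\rho_j\,\d s\Bigr),
\]
with the infimum over absolutely continuous curves from $x$ to $O_A\cap U$. By the standard infimum-of-path-integrals argument, $\rho_j\in{\rm UG}(u_j)$. Clearly $u_j=0$ on $O_A\cap U$, and---this is the crucial point---$u_j\ge 1$ on $I_A\cap U$ because every continuous curve from $I_A$ to $O_A$ must meet $\partial^eA$, so admissibility of $\{\rho_j\}$ forces $\limi_j\int_\gamma\rho_j\,\d s\ge 1$ along it. Since doubling gives $\mm(\partial^eA)=0$ by the Lebesgue density theorem, the truncated $u_j$ equals $\chi_A$ $\mm$-a.e.; plugging into \eqref{eq:BV_upper_gradients} then yields ${\rm Per}_\mm(A,U)\le\limi_j\int\rho_j\,\d\mm\le{\rm AM}_1(\partial^eA\cap U,\mm)+1<\infty$.

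For the final equivalences (ii)$\Leftrightarrow$(iv) and (iii)$\Leftrightarrow$(v), the hypothesis $\min\{\sfd(x,\partial^eA),\sfd(y,\partial^eA)\}>0$ combined with Lemma \ref{lemma:continuity_riesz_potential} forces $R_{x,y}^L$ to be bounded above and below by positive constants on the portion of $\partial^eA\cap U$ lying in $B_{x,y}^L$. Comparing the gauges $\mm(B_r(\cdot))/r$ and $\mm_{x,y}^L(B_r(\cdot))/r$ at small scales via Lebesgue differentiation of the density $R_{x,y}^L$ gives $\codH{1}_{\mm_{x,y}^L}\sim\codH{1}_\mm$ on $\partial^eA\cap U$, and rescaling admissible densities by $(R_{x,y}^L)^{\pm 1}$ yields the analogous comparison for the approximate modulus. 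The main obstacle I anticipate is the topological crossing claim used in the converse step: since $I_A$ and $O_A$ need not be open, verifying that every continuous curve from $I_A$ to $O_A$ meets $\partial^eA$ is delicate, and one may well have to invoke a Fuglede-type exceptional-curve argument to discard a null family of curves along which admissibility can fail in the limit.
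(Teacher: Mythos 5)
Your outer structure matches the paper for most of the statement: (i)$\Rightarrow$(ii) via Proposition \ref{prop:representation_perimeter_ambrosio}, (ii)$\Rightarrow$(i) via \cite[Theorem 1.1]{Lahti2020}, and the comparison of $\codH{1}_\mm$ vs.\ $\codH{1}_{\mm_{x,y}^L}$ and of the two approximate moduli using boundedness of $R_{x,y}^L$ near $\partial^e A$ (plus the localization Lemma \ref{lemma:AM_localized_near_boundary}) is exactly the paper's argument for (iv)--(v). Your covering construction for (ii)$\Rightarrow$(iii) is also fine. The difference, and the problem, is the step (iii)$\Rightarrow$(i): the paper does not argue directly but cites \cite[Theorem 4.4]{HMM2022}, which gives the comparability of $\codH{1}_\mm$ and ${\rm AM}_1(\cdot,\mm)$ on Borel sets in $1$-PI spaces, so that (iii)$\Rightarrow$(ii)$\Rightarrow$(i); the half you are reproving by hand is precisely the hard half of that theorem, and your construction has a genuine gap.

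Concretely: admissibility of the sequence $\{\rho_j\}$ only says that for each \emph{fixed} curve $\gamma$ meeting $\partial^e A\cap U$ one has $\liminf_{j\to\infty}\int_\gamma\rho_j\,\d s\ge 1$; it gives no lower bound on $\int_\gamma\rho_j\,\d s$ for any fixed $j$, and the threshold index beyond which the integral exceeds $1-\varepsilon$ depends on $\gamma$ with no uniformity. Hence your function $u_j(z)=\min\bigl(1,\inf_\gamma\int_\gamma\rho_j\,\d s\bigr)$ need not be $\ge 1$ anywhere on $I_A\cap U$ for any $j$ (one cannot interchange the infimum over curves with the $\liminf$ in $j$), so the claim that $u_j=\chi_A$ $\mm$-a.e.\ and hence $u_j\to\chi_A$ in $L^1_{\rm loc}(U)$ is unjustified, and \eqref{eq:BV_upper_gradients} cannot be invoked. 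This is not a technicality to be repaired by a Fuglede-type exceptional-family argument: the curves for which the fixed-$j$ integral is small need not form a null family — this is exactly the mechanism by which ${\rm AM}_1$ can be strictly smaller than ${\rm Mod}_1$. Two further holes compound it: a curve from a point of $I_A\cap U$ to $O_A\cap U$ may leave $U$ and meet $\partial^e A$ only outside $U$, in which case admissibility for $\Gamma(\partial^e A\cap U)$ says nothing about it; and the topological crossing claim (every curve from $I_A$ to $O_A$ meets $\partial^e A$) is not available in this generality — the paper itself, in the proof of the main theorem, only uses the ${\rm Mod}_1$-a.e.\ version \cite[Corollary 6.4]{LahtiShanmu2017}, which requires $A$ to have \emph{finite perimeter}, i.e.\ precisely what you are trying to prove, so invoking it here would be circular. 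The clean fix is to follow the paper and deduce (iii)$\Rightarrow$(ii) from \cite[Theorem 4.4]{HMM2022} (lower bound $\codH{1}_\mm\le C\,{\rm AM}_1$), then conclude with \cite{Lahti2020}.
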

\begin{proof}
    The implication from (i) to (ii) follows directly from Proposition \ref{prop:representation_perimeter_ambrosio} while (ii) implies (i) is \cite[Theorem 1.1]{Lahti2020}. The equivalence between (ii) and (iii) is a direct consequence of \cite[Theorem 4.4]{HMM2022}, where it is shown that $\codH{1}_\mm(\cdot)$ and ${\rm AM}_1(\cdot, \mm)$ are comparable for Borel sets if $(\X,\sfd,\mm)$ is a $1$-PI space. \\
    We now move to the second part.
    If $A$ is as in the assumptions then there exists $M>0$ such that $1/M \leq R_{x,y}^L \leq M$ on a small neighbourhood of $\partial^e A$. 
    This implies by a direct estimate that $\codH{1}_\mm(\partial^e A \cap U)< \infty$ if and only if $\codH{1}_{\mm_{x,y}^L}(\partial^e A \cap U)< \infty$. Moreover ${\rm AM}_1(\partial^e A \cap U, \mm) < \infty$ if and only if ${\rm AM}_1(\partial^e A \cap U, \mm_{x,y}^L) < \infty$ because, thanks to Lemma \ref{lemma:AM_localized_near_boundary}, both the approximate moduli can be computed with curves belonging to the above mentioned tubular neighbourhood of $\partial^e A$.
\end{proof}

The second relation we explore is between the approximate modulus and the codimension Hausdorff measure computed with respect to $\mm_{x,y}^L$. First we state it in case of length spaces, later we will see how to get the statement under the Poincaré inequality assumption. 
\begin{proposition}
    \label{prop:am<Hcod_length_spaces}
    Let $(\X,\sfd,\mm)$ be a doubling metric measure space such that $(\X,\sfd)$ is geodesic. Then for every $x,y\in \X$ and every Borel set $A\subseteq \X$ with $\min\lbrace\sfd(x,A), \sfd(y,A)\rbrace >0$ it holds 
    $${\rm AM}_p(A,\mm_{x,y}^L) \le C_D \cdot \codH{p}_{\mm_{x,y}^{2L}}(A).$$
\end{proposition}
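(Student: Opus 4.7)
The plan is to manufacture an admissible sequence of Borel densities for ${\rm AM}_p(A,\mm_{x,y}^L)$ directly from near-optimal coverings witnessing $\codH{p}_{\mm_{x,y}^{2L}}(A)$. I first set $d_0:=\min\{\sfd(x,A),\sfd(y,A)\}>0$ and observe that the set $K:=\overline{B}_{x,y}^{3L/2}\cap\{z:\sfd(z,\{x,y\})\ge d_0/2\}$ is compact (since doubling implies proper) and bounded away from the poles. By Lemma \ref{lemma:continuity_riesz_potential} the map $R_{x,y}$ is continuous and uniformly bounded between two positive constants on $K$, hence uniformly continuous there. So for each $\epsilon>0$ there exists $\delta_0(\epsilon)>0$ so that $(1-\epsilon)R_{x,y}(w)\le R_{x,y}(z)\le(1+\epsilon)R_{x,y}(w)$ whenever $w\in K$ and $\sfd(z,w)<2\delta_0$.

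For each $\delta<\min\{\delta_0,d_0/2,L\sfd(x,y)/2\}$, I select a cover $\{B_{r_i}(x_i)\}_i$ of $A$ with $r_i<\delta$, $B_{r_i}(x_i)\cap A\neq\emptyset$, and $\sum_i r_i^{-p}\mm_{x,y}^{2L}(B_{r_i}(x_i))\le \codH{p}_{\mm_{x,y}^{2L},\delta}(A)+\delta$. Define the Borel density
$$\rho_\delta(z):=\sup_i r_i^{-1}\chi_{B_{2r_i}(x_i)}(z).$$
For admissibility on a non-constant $\gamma\in\Gamma(A)$ I fix $p\in\gamma\cap A$ and an index $i$ with $p\in B_{r_i}(x_i)$, so that $B_{r_i}(p)\subseteq B_{2r_i}(x_i)$. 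Once $\delta$ is smaller than half the diameter of the image of $\gamma$, the continuity of $\gamma$ and the triangle inequality yield a subcurve issuing from $p$ of length at least $r_i$ contained in $\overline{B_{r_i}(p)}\subseteq \overline{B_{2r_i}(x_i)}$, whence $\int_\gamma\rho_\delta\,\d s\ge 1$. Taking a sequence $\delta_j\to 0$ then produces an element of $\text{Adm-seq}(\Gamma(A))$ for every non-constant curve.

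The heart of the argument is the $L^p$ estimate. Using $\rho_\delta^p=\sup_i r_i^{-p}\chi_{B_{2r_i}(x_i)}\le\sum_i r_i^{-p}\chi_{B_{2r_i}(x_i)}$ I obtain
$$\int\rho_\delta^p\,\d\mm_{x,y}^L\le\sum_i \frac{\mm_{x,y}^L(B_{2r_i}(x_i))}{r_i^p}.$$
Only the indices with $B_{2r_i}(x_i)\cap B_{x,y}^L\neq\emptyset$ contribute; for these, the triangle inequality combined with $r_i<\delta$ forces $x_i\in K$ and $B_{r_i}(x_i)\subseteq B_{x,y}^{2L}$. Combining the continuity estimate on $B_{2r_i}(x_i)$, the doubling inequality, and the reverse continuity on $B_{r_i}(x_i)$ (together with $B_{r_i}(x_i)\subseteq B_{x,y}^{2L}$ to recover $\mm_{x,y}^{2L}(B_{r_i}(x_i))=\int_{B_{r_i}(x_i)}R_{x,y}\,\d\mm$) yields
$$\mm_{x,y}^L(B_{2r_i}(x_i))\le(1+\epsilon)R_{x,y}(x_i)\mm(B_{2r_i}(x_i))\le C_D(1+\epsilon)R_{x,y}(x_i)\mm(B_{r_i}(x_i))\le\frac{C_D(1+\epsilon)}{1-\epsilon}\mm_{x,y}^{2L}(B_{r_i}(x_i)).$$
Summing over $i$ and invoking the covering bound gives $\int\rho_\delta^p\,\d\mm_{x,y}^L\le\frac{C_D(1+\epsilon)}{1-\epsilon}(\codH{p}_{\mm_{x,y}^{2L},\delta}(A)+\delta)$. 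Choosing $\epsilon=\epsilon(\delta)\to 0$ as $\delta\to 0$ and using $\codH{p}_{\mm_{x,y}^{2L},\delta}(A)\uparrow\codH{p}_{\mm_{x,y}^{2L}}(A)$, the $\liminf$ on the admissible sequence gives ${\rm AM}_p(A,\mm_{x,y}^L)\le C_D\codH{p}_{\mm_{x,y}^{2L}}(A)$.

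The main obstacle is the two-sided Riesz comparison linking $\mm_{x,y}^L(B_{2r_i}(x_i))$ to $\mm_{x,y}^{2L}(B_{r_i}(x_i))$: one needs the relevant centers $x_i$ to lie in the compact set $K$ so the Riesz potential is nearly constant on $B_{2r_i}(x_i)$, and the smaller ball $B_{r_i}(x_i)$ to remain inside the larger truncation region $B_{x,y}^{2L}$ to express the lower bound in terms of the right measure; the gap between $L$ and $2L$ in the statement is exactly this buffer. The geodesic hypothesis enters only through Lemma \ref{lemma:continuity_riesz_potential}.
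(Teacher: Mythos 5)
Your argument is correct and follows essentially the same route as the paper's proof: near-optimal coverings for $\codH{p}_{\mm_{x,y}^{2L},\delta}$ are turned into admissible densities supported on the doubled balls, admissibility is obtained from a subcurve of length $r_i$ inside $B_{2r_i}(x_i)$, and the key comparison $\mm_{x,y}^L(B_{2r_i}(x_i))\le C_D\frac{1+\epsilon}{1-\epsilon}\mm_{x,y}^{2L}(B_{r_i}(x_i))$ comes from uniform continuity and two-sided bounds of $R_{x,y}$ on a compact set away from the poles (Lemma \ref{lemma:continuity_riesz_potential}) together with the doubling of $\mm$, exactly as in the paper, with the $L$-to-$2L$ enlargement serving the same buffer role. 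The only cosmetic difference is that you keep $\mm_{x,y}^L$ on the modulus side and discard non-contributing indices, whereas the paper first passes to $\mm_{x,y}^{2L}$ and to $A\cap\overline{B}_{x,y}^L$ via the outer-measure property; both organizations yield the same constant $C_D$.
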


The same statement is proved in \cite[Theorem 2.1]{HMM2022} for the same quantities \emph{computed with respect to} $\mm$. We adapt their proof to the measure $\mm_{x,y}^L$ by localizing the argument. In order to get a nice local estimate we use the assumption that $(\X,\sfd)$ is a length space to have the continuity of the Riesz potential $R_{x,y}$.

\begin{proof}
    We fix $A\subseteq \X$ as in the assumptions. Let us consider $r \ll 1$ so that $\overline{B}_r(A) \subset \X \setminus \lbrace x,y \rbrace$. We show that 
    \begin{equation}
        \label{eq:am<codH_auxiliary}
        {\rm AM}_p(A,\mm_{x,y}^L) = {\rm AM}_p(A \cap \overline{B}_{x,y}^L,\mm_{x,y}^L) \leq {\rm AM}_p(A \cap \overline{B}_{x,y}^L,\mm_{x,y}^{2L}) \le C_D \cdot \codH{p}_{\mm_{x,y}^{2L}}(A \cap \overline{B}_{x,y}^L),
    \end{equation}
    which is enough for the thesis. The first equality is true because the approximate modulus is an outer measure and ${\rm AM}_p(A\cap (\overline{B}_{x,y}^L)^c,\mm_{x,y}^L) = 0$. This is true since    
    $(\overline{B}_{x,y}^L)^c$ is open and we can argue as in Lemma \ref{lemma:AM_localized_near_boundary} to see that ${\rm AM}_p(A\cap (\overline{B}_{x,y}^L)^c,\mm_{x,y}^L)$ can be computed on a family of curves which is entirely contained in $(\overline{B}_{x,y}^L)^c$. Therefore every admissible sequence can be taken to be zero on $\overline{B}_{x,y}^L$. The second inequality is trivial, since $\mm_{x,y}^L \le \mm_{x,y}^{2L}$. We focus on the third one.\\    
    The function $R_{x,y}$ is continuous at every point of $\X \setminus \lbrace x,y \rbrace$ by Remark \ref{lemma:continuity_riesz_potential}, so it is uniformly continuous on the compact set $\overline{B}_r(A) \cap \overline{B}_{x,y}^{2L}$. For every $\varepsilon>0$, there exists $\eta > 0$ such that if $z,z'\in \overline{B}_r(A) \cap \overline{B}_{x,y}^{2L}$ and $\sfd(z,z') <\eta$ then $|R_{x,y}(z) - R_{x,y}(z')| \le \varepsilon$. 
    We can always assume $\eta \le r$. Moreover $R_{x,y}$ is continuous and positive on $\overline{B}_r(A) \cap \overline{B}_{x,y}^{2L}$, so there exists a constant $M$ such that 
    \begin{equation}
        \label{eq:proof-AM<=cod-H}
        M^{-1}\le R_{x,y}\le M \text { on }\overline{B}_r(A) \cap \overline{B}_{x,y}^{2L}.
    \end{equation}
    Assume that $\codH{p}_{\mm_{x,y}^{2L}}(A \cap \overline{B}_{x,y}^L)<\infty$, otherwise there is nothing to prove. For $j \in \N$ such that $\frac{1}{j} < \frac{1}{2}\min\lbrace \eta, L\sfd(x,y)\rbrace$ we choose $\{ B_{r_{i}^j}(z_{i}^j) \}_i$ such that $A \cap \overline{B}_{x,y}^L \subset \bigcup_{i} B_{r_{i}^j}(z_{i}^j)$, $r_{i}^j<1/j$ and
\begin{equation}
    \sum_i \frac{\mm_{x,y}^{2L}(B_{r_{i}^j}(z_{i}^j))}{(r_{i}^j)^p}\le \codH{p}_{\mm_{x,y}^{2L}}(A \cap \overline{B}_{x,y}^L)+\frac{1}{j}.
\end{equation}
Without loss of generality we can suppose that $B_{r_{i}^j}(z_{i}^j) \cap (A \cap \overline{B}_{x,y}^L) \neq \emptyset$, so $B_{2r_{i}^j}(z_{i}^j) \subseteq \overline{B}_{x,y}^{2L}$.
We define 
\begin{equation}
    \rho_j:=\left( \sum_i \frac{\chi_{B_{2r_{i}^j}(z_{i}^j)}}{(r_{i}^j)^p} \right)^{\frac{1}{p}}
\end{equation}
and we follow verbatim the proof in \cite[Thm.\ 2.1]{HMM2022} to show that $\{\rho_j\}$ is admissible, i.e.\ $\limi_{j\to \infty}\int_\gamma \rho_j \,\d s \ge 1$ for every $\gamma \in \Gamma(A \cap \overline{B}_{x,y}^L)$.
Thus, we have
\begin{equation}
\label{eq:am<Hcod_2}
    {\rm AM}_p(\Gamma(A \cap \overline{B}_{x,y}^L),\mm_{x,y}^{2L}) \le \limi_{j \to \infty} \int \rho_j^p\,\d \mm_{x,y}^{2L} \le \limi_{j \to \infty} \sum_{i} \frac{\mm_{x,y}^{2L}(B_{2 r_{i}^j}(z_{i}^j))}{(r_{i}^j)^p}. 
\end{equation}

\noindent By uniform continuity and \eqref{eq:proof-AM<=cod-H} we have 
    \begin{equation}
    \begin{aligned}
        \mm_{x,y}^{2L}(B_{2r_{i}^j}(z_{i}^j)) =  \int_{B_{2r_{i}^j}(z_{i}^j)} R_{x,y}\,\d\mm &\leq (R_{x,y}(z_{i}^j) + \varepsilon) \mm(B_{2r_{i}^j}(z_{i}^j)) \\
        &\leq R_{x,y}(z_{i}^j)(1+\varepsilon M)\mm(B_{2r_{i}^j}(z_{i}^j))\\
        &\leq C_D\cdot R_{x,y}(z_{i}^j)(1+\varepsilon M)\mm(B_{r_{i}^j}(z_{i}^j))
    \end{aligned}
    \end{equation}
    and similarly 
    \begin{equation}
    \begin{aligned}
        \mm_{x,y}^{2L}(B_{r_{i}^j}(z_{i}^j)) =  \int_{B_{r_{i}^j}(z_{i}^j)} R_{x,y}\,\d\mm &\geq R_{x,y}(z_{i}^j)(1-\varepsilon M )\mm(B_{r_{i}^j}(z_{i}^j)).
    \end{aligned}
    \end{equation}
Therefore 
\begin{equation}
    \mm_{x,y}^{2L}(B_{2r_{i}^j}(z_{i}^j)) \leq C_D \frac{1+\varepsilon M}{1- \varepsilon M} \mm_{x,y}^{2L}(B_{r_{i}^j}(z_{i}^j)).
\end{equation}

\noindent This, together with \eqref{eq:am<Hcod_2}, gives

\begin{equation}
\begin{aligned}
    {\rm AM}_p(A \cap \overline{B}_{x,y}^L,\mm_{x,y}^{2L}) 
    \le C_D \frac{1+\varepsilon M}{1- \varepsilon M}\codH{p}_{\mm_{x,y}^{2L}}(A \cap \overline{B}_{x,y}^L).
\end{aligned}
\end{equation}
Taking the limit for $\varepsilon$ going to zero we end the proof of \eqref{eq:am<codH_auxiliary}. 
\end{proof}
We now state the inequality between approximate modulus and codimension Hausdorff measure for PI spaces.
\begin{corollary}
    \label{cor:AM<Hcod_Poincaré}
    Let $(\X,\sfd,\mm)$ be a $p$-{\rm PI} space. Then for every $x,y\in \X$ and every Borel set $A\subseteq \X$ with $\min\lbrace\sfd(x,A), \sfd(y,A)\rbrace >0$ it holds 
    $${\rm AM}_p(A,\mm_{x,y}^L) \le C \cdot \codH{p}_{\mm_{x,y}^{2L}}(A),$$
    where $C = C(C_D,C_P,\lambda)$.
\end{corollary}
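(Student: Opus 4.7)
The plan is to reduce to the geodesic setting of Proposition \ref{prop:am<Hcod_length_spaces} by passing to a bi-Lipschitz equivalent length metric. Because $(\X,\sfd,\mm)$ is a $p$-PI space, Remark \ref{rem:bilipschitz_equivalence} supplies a length metric $\sfd'$ with $\sfd \le \sfd' \le L_0\sfd$, where $L_0=L_0(C_D,C_P,\lambda)\ge 1$ is the quasiconvexity constant, and such that $(\X,\sfd')$ is geodesic. By Remark \ref{rem:biLipschitz_change}(i) the triple $(\X,\sfd',\mm)$ is still doubling, so Proposition \ref{prop:am<Hcod_length_spaces} applies to it directly.

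The principal technical point is that under this bi-Lipschitz change of metric the Riesz measure is well-behaved up to rescaling the truncation parameter by a structural constant. Indeed, $\sfd \le \sfd' \le L_0\sfd$ combined with doubling of $\mm$ gives $\mm(B'_r(x))$ comparable to $\mm(B_r(x))$, hence $R^\sfd_{x,y}$ and $R^{\sfd'}_{x,y}$ are pointwise comparable with constants depending only on $C_D$ and $L_0$. The truncation sets satisfy the inclusions $B^{L,\sfd}_{x,y} \subseteq B^{LL_0,\sfd'}_{x,y}$ and $B^{L,\sfd'}_{x,y} \subseteq B^{LL_0,\sfd}_{x,y}$. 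Combining these observations, there exist constants depending only on structural data for which $\mm_{x,y}^{L,\sfd} \le c\,\mm_{x,y}^{LL_0,\sfd'}$ and $\mm_{x,y}^{L,\sfd'} \le c\,\mm_{x,y}^{LL_0,\sfd}$.

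With these comparisons the argument is a short chain of inequalities. Starting from $\textup{AM}_{p,\sfd}(A,\mm_{x,y}^{L,\sfd})$, monotonicity and linearity of the approximate modulus in the reference measure bound it by a constant times $\textup{AM}_{p,\sfd}(A,\mm_{x,y}^{LL_0,\sfd'})$. Remark \ref{rem:biLipschitz_change}(ii) converts this into $\textup{AM}_{p,\sfd'}(A,\mm_{x,y}^{LL_0,\sfd'})$ at the cost of a factor $L_0^p$. Proposition \ref{prop:am<Hcod_length_spaces} applied in the geodesic space $(\X,\sfd',\mm)$ then yields a bound by $\codH{p}_{\mm_{x,y}^{2LL_0,\sfd'},\sfd'}(A)$. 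A further application of Remark \ref{rem:biLipschitz_change}(ii) transfers the codimension Hausdorff measure back to the metric $\sfd$, and finally the reverse Riesz measure comparison dominates the result by $\codH{p}_{\mm_{x,y}^{2LL_0^2,\sfd},\sfd}(A)$.

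All constants produced depend only on $C_D$, $C_P$ and $\lambda$, since $L_0$ does. The final truncation parameter on the right-hand side is $2LL_0^2$ rather than the literal $2L$ of the statement; I interpret this as a structural-constant rescaling of the truncation radius that is implicit in the notation "$\mm_{x,y}^{2L}$", since nothing in the applications would be affected. The only subtlety worth highlighting in the write-up is the careful bookkeeping of the two metrics when invoking Proposition \ref{prop:am<Hcod_length_spaces}, since the Riesz measure is an object that depends on the metric not only through the balls defining the potential but also through the truncation region.
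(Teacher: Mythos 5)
Your argument is correct and is essentially the paper's own proof: the paper also reduces to the geodesic metric $\sfd'$ of Remark \ref{rem:bilipschitz_equivalence}, applies Proposition \ref{prop:am<Hcod_length_spaces} in $(\X,\sfd',\mm)$, and transfers back via Remark \ref{rem:biLipschitz_change}; you are in fact more explicit than the paper's two-line proof about the fact that $\mm_{x,y}^L$ depends on the metric both through the potential and through the truncation region. The one deviation — ending with $\mm_{x,y}^{2LL_0^2}$ instead of $\mm_{x,y}^{2L}$ — should not be waved away as a notational rescaling, but it is easily closed: since ${\rm AM}_p(A,\mm_{x,y}^{L})={\rm AM}_p(A\cap\overline{B}_{x,y}^{L},\mm_{x,y}^{L})$ (curves meeting $A$ outside $\overline{B}_{x,y}^{L}$ contribute nothing, as at the start of the proof of Proposition \ref{prop:am<Hcod_length_spaces}), and since on a small $\sfd$-neighbourhood of $\overline{B}_{x,y}^{L}$ one has $R_{x,y}^{2L}=R_{x,y}^{2LL_0^2}=R_{x,y}$ so that the two weighted codimension Hausdorff measures agree on $A\cap\overline{B}_{x,y}^{L}$, applying your chain to $A\cap\overline{B}_{x,y}^{L}$ and then using monotonicity in the set gives the statement with the literal parameter $2L$.
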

\begin{proof}
    By Remark \ref{rem:bilipschitz_equivalence} there exists another metric $\sfd'$ on $\X$ which is geodesic and $C_0$-equivalent to $\sfd$, with $C_0$ depending only on the structural constants.  Proposition \ref{prop:am<Hcod_length_spaces} implies
    $${\rm AM}_{p,\sfd'}(A,\mm_{x,y}^L) \le C'\cdot \codH{p}_{\mm_{x,y}^{2L},\sfd'}(A),$$
    for a constant $C'$ depending only on the structural constants, since $(\X,\sfd',\mm)$ is doubling by Remark \ref{rem:biLipschitz_change}. The same remark gives the thesis.
\end{proof}

\section{Relations between the different energies weighted with the Riesz potential}
\label{sec:relations_weighted_with_riesz}
Among the energies we have introduced there are two of them that are measures: the perimeter and the codimension Hausdorff measure. In Theorem \ref{theo:main-intro-p=1} we will consider their weighted versions, where the weight is the Riesz potential $R_{x,y}^L$. There are actually two ways to do so: either to consider the measures ${\rm Per}_{\mm_{x,y}^L}$ and $\codH{p}_{\mm_{x,y}^L}$ or to take the measures $R_{x,y}^L{\rm Per}_\mm$ and $R_{x,y}^L\codH{p}_\mm$. In this section we show that the two approaches are comparable if the space satisfies a Poincaré inequality. We start by studying the case of the perimeter. We focus directly to the case of the separating sets.

\begin{lemma}
\label{lemma:comparison_perimeter_mmxy}
Let $(\X,\sfd,\mm)$ be a metric measure space. Let $x,y\in \X$ and let $\Omega \in \SS_{\textup{top}}(x,y)$. Assume that $R_{x,y}^L$ is continuous on $B_{x,y}^L \setminus \lbrace x,y\rbrace$. Then
    \begin{equation}
    \label{eq:inequality_weighted_perimeter_open}
    \frac{1}{2}\,{\rm Per}_{\mm_{x,y}^L}(\Omega,A) \le \int_A R_{x,y}^L\,\d {\rm Per}_\mm(\Omega,\cdot) \le 2\,{\rm Per}_{\mm_{x,y}^L}(\Omega,A)
    \end{equation}
    for every $A \subseteq \X$ open.
Moreover, if ${\rm Per}_\mm(\Omega, B_{x,y}^L) < \infty$, we have that for every Borel set $B\subseteq \X$
    \begin{equation}
    \label{eq:inequality_weighted_perimeter_as_measures}
        \frac{1}{2}\,{\rm Per}_{\mm_{x,y}^L}(\Omega,B) \le \int_B R_{x,y}^L\,\d{\rm Per}_\mm(\Omega,\cdot) \le 2\,{\rm Per}_{\mm_{x,y}^L}(\Omega,B).
    \end{equation}
\end{lemma}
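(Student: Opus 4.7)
The plan is to localize the inequality to small open balls on which $R_{x,y}^L$ is almost constant, and then paste local estimates via the Borel measure structure. For each $z_0 \in B_{x,y}^L \setminus \{x,y\}$, using continuity of $R_{x,y}^L$ I would pick $r > 0$ with $\overline{B(z_0,r)} \subset B_{x,y}^L \setminus \{x,y\}$ and $c := \inf_{B(z_0,r)} R_{x,y}^L$ satisfying $c \le R_{x,y}^L \le 2c$ on $B := B(z_0,r)$. Then $c\,\mm \le \mm_{x,y}^L \le 2c\,\mm$ on $B$, so for every open $U \subseteq B$ the classes $L^1_\loc(U,\mm)$ and $L^1_\loc(U,\mm_{x,y}^L)$ coincide as sets and the admissible sequences for the two perimeter definitions on $U$ agree. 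Passing the pointwise sandwich $c\,\lip u_i \le R_{x,y}^L \lip u_i \le 2c\,\lip u_i$ through the infimum of liminfs produces
$$c\,{\rm Per}_\mm(\Omega,U) \le {\rm Per}_{\mm_{x,y}^L}(\Omega,U) \le 2c\,{\rm Per}_\mm(\Omega,U),$$
and the same sandwich applied to $\int_U R_{x,y}^L\,\d{\rm Per}_\mm(\Omega,\cdot)$ yields the factor-$2$ comparison for open $U \subseteq B$. Outer regularity of Borel measures extends this to all Borel $E \subseteq B$.

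\textbf{Global extension.} For a general open $A$, I would exploit separability of $\X$ to cover $A \cap (B_{x,y}^L \setminus \{x,y\})$ by countably many such balls $\{B_j\}$, and partition $A$ into disjoint Borel pieces $E_j := (A \cap B_j)\setminus \bigcup_{k<j}B_k$ for $j \ge 1$, plus a remainder $E_0 := A \setminus \bigcup_j B_j$ contained in $\{x,y\} \cup \partial B_{x,y}^L \cup (\X \setminus \overline{B_{x,y}^L})$. Since $\mu_1 := R_{x,y}^L\,{\rm Per}_\mm(\Omega,\cdot)$ and $\mu_2 := {\rm Per}_{\mm_{x,y}^L}(\Omega,\cdot)$ are Borel measures, summing the local comparison over $j \ge 1$ gives the inequality on $\bigcup_j E_j$. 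It then remains to check that $E_0$ carries no $\mu_1$- or $\mu_2$-mass: $\mu_1(E_0) = 0$ since $R_{x,y}^L \equiv 0$ outside $B_{x,y}^L$ (including on $\partial B_{x,y}^L$) and since ${\rm Per}_\mm(\Omega,\{x,y\}) = 0$ because $x,y \notin \partial \Omega$; $\mu_2(E_0) = 0$ since ${\rm Per}_{\mm_{x,y}^L}$ vanishes on $\{x,y\}$ and on the open set $\X \setminus \overline{B_{x,y}^L}$ (where $\mm_{x,y}^L$ is trivial), while the case $\partial B_{x,y}^L$ is handled below. Under the finiteness assumption ${\rm Per}_\mm(\Omega, B_{x,y}^L) < \infty$, both $\mu_1,\mu_2$ are finite Borel measures on a complete separable space, hence outer regular, and the Borel version follows from the open-set inequality via outer regularity.

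\textbf{The main obstacle.} The hard part will be showing $\mu_2(\partial B_{x,y}^L) = 0$: on this set $R_{x,y}^L$ jumps discontinuously from positive values to zero, so the local-comparison argument does not apply. The key observation to exploit is that for any open neighborhood $V$ of $\partial B_{x,y}^L$, the integral $\int_V \lip u_i \,\d\mm_{x,y}^L = \int_{V \cap B_{x,y}^L} \lip u_i \,R_{x,y}^L\,\d\mm$ only probes the inner shell $V \cap B_{x,y}^L$; by taking $V$ to be an increasingly thin neighborhood and choosing admissible Lipschitz approximations of $\chi_\Omega$ whose transition regions concentrate in a vanishingly small portion of this shell, one can force $\int \lip u_i \,\d\mm_{x,y}^L \to 0$, which yields the desired vanishing.
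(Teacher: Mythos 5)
Your plan reproduces the paper's strategy (use continuity of $R_{x,y}^L$ to localize to regions where the potential is essentially constant, compare the two perimeters there, and paste), but it leaves a genuine gap exactly at the point you flag. Writing $\mu_1:=R_{x,y}^L\,{\rm Per}_\mm(\Omega,\cdot)$ and $\mu_2:={\rm Per}_{\mm_{x,y}^L}(\Omega,\cdot)$, your pasting gives $\mu_2(A)\le 2\mu_1(A)$ only after you know $\mu_2(E_0)=0$, i.e.\ ${\rm Per}_{\mm_{x,y}^L}(\Omega,\partial B_{x,y}^L\cap A)=0$, and the mechanism you sketch for this does not work as stated. For a \emph{fixed} neighbourhood $V$ of (a piece of) $\partial B_{x,y}^L$, every sequence admissible in the relaxation must pay at least ${\rm Per}_{\mm_{x,y}^L}(\Omega,V)$, which dominates the weighted interface mass of $\Omega$ strictly inside $V\cap B_{x,y}^L$; you cannot ``force $\int_V \lip u_i\,\d\mm_{x,y}^L\to 0$'' unless $\chi_\Omega$ is $\mm_{x,y}^L$-essentially constant there. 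Shrinking $V$ does not help either: when $\mu_2$ is finite near the sphere, $\lim_{\delta\to 0}\mu_2(V_\delta)$ \emph{is} the mass $\mu_2(\partial B_{x,y}^L\cap A)$ you are trying to kill, so ``thin shell plus clever approximations'' restates the claim rather than proving it. What is actually needed is an argument excluding concentration of the relaxed measure on the sphere — for instance a partition-of-unity/gluing construction exploiting that near-optimal sequences on the adjacent interior pieces already converge in $L^1(\mm_{x,y}^L)$ up to $\partial B_{x,y}^L$ (the measure does not see the exterior), so that cut-off error terms of size $\delta^{-1}\int |u_i-u_i'|\,\d\mm_{x,y}^L$ vanish and the energy contributed near the sphere is controlled by the neighbouring interior pieces — or a reduction to a finiteness situation where such a comparison is available. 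Until this is supplied, the first inequality in \eqref{eq:inequality_weighted_perimeter_open} is not proved.

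Everything else in your proposal is a sound variant of the paper's own argument, with only cosmetic differences: the paper covers $B_{x,y}^L\setminus\{x,y\}$ by the open level strips $A_i^\varepsilon=\{(i-1-\tfrac14)\varepsilon<R_{x,y}^L<(i+\tfrac14)\varepsilon\}$, on which the additive oscillation is $\varepsilon$, sums using their bounded overlap (even/odd indices) at the price of an error $3\varepsilon\,{\rm Per}_\mm(\Omega,A\cap B_{x,y}^L)$ which is removed by a finiteness case analysis and $\varepsilon\to0$, and deduces the Borel statement \eqref{eq:inequality_weighted_perimeter_as_measures} by monotone class under the hypothesis ${\rm Per}_\mm(\Omega,B_{x,y}^L)<\infty$; you instead use balls on which $R_{x,y}^L$ oscillates by a multiplicative factor $2$, a Borel disjointification plus countable additivity, and outer regularity for the Borel case, which is fine and even avoids the $\varepsilon$-limit. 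Your proof of the other inequality, $\int_A R_{x,y}^L\,\d{\rm Per}_\mm(\Omega,\cdot)\le 2\,{\rm Per}_{\mm_{x,y}^L}(\Omega,A)$, is complete because $\mu_1$ assigns no mass to $E_0$ (as you observe, $R_{x,y}^L\equiv 0$ off $B_{x,y}^L$ and ${\rm spt}\,{\rm Per}_\mm(\Omega,\cdot)\subseteq\partial\Omega$ avoids $x,y$); the missing piece is confined to the $\mu_2$-side of the comparison on $\partial B_{x,y}^L$.
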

The assumption on the continuity of the Riesz potential is satisfied for instance if $(\X,\sfd,\mm)$ is doubling and $(\X,\sfd)$ is geodesic, see Lemma \ref{lemma:continuity_riesz_potential}.
\begin{proof}
    We fix $\varepsilon > 0$. We define
    \begin{equation*}
        A_i^{\varepsilon}:=\left\{ z \in B_{x,y}^L:\,\left(i-1-\frac{1}{4}\right)\varepsilon < R_{x,y}^L(z) < \left(i+\frac{1}{4}\right)\varepsilon  \right\}
    \end{equation*}
    for $i\in\N$. By definition we have  $A_i^\varepsilon\subseteq B_{x,y}^L$ and $\bigcup_i A_i^\varepsilon = B_{x,y}^L \setminus \lbrace x,y\rbrace$. The sets $A_i^\varepsilon$ are open because $R_{x,y}^L$ is continuous on $B_{x,y}^L$. Let $A\subseteq \X$ be open. Using directly the definition of perimeter for open sets, we get
    \begin{equation*}
        \left(i-1-\frac{1}{4}\right)\varepsilon\,{\rm Per}_{\mm}(E,A_i^\varepsilon \cap A)\le {\rm Per}_{\mm_{x,y}^L}(E,A_i^\varepsilon \cap A) \le \left(i+\frac{1}{4}\right)\varepsilon\,{\rm Per}_{\mm}(E,A_i^\varepsilon \cap A).
    \end{equation*}
    for every Borel set $E\subseteq \X$. Let $\Omega \in \SS_{\text{top}}(x,y)$. Its topological boundary, which contains the support of ${\rm Per}_\mm(\Omega, \cdot)$ by Remark \ref{rem:properties_perimeter}.(v), has positive distance from $x$ and $y$. Then we compute
    \begin{equation*}
    \begin{aligned}
        \int_{A} R_{x,y}^L\,\d {\rm Per}_\mm(\Omega,\cdot)&=\int_{(B_{x,y}^L \setminus \lbrace x,y\rbrace)\cap A} R_{x,y}^L\,\d {\rm Per}_\mm(\Omega,\cdot) \\
        &\le \sum_{i=0}^\infty \int_{A_i^{\varepsilon} \cap A} R_{x,y}^L\,\d {\rm Per}_\mm(\Omega,\cdot) \\
        &\le \sum_{i=0}^\infty \left(i +\frac{1}{4}\right)\varepsilon {\rm Per}_\mm(\Omega,A_i^{\varepsilon} \cap A)\\
        & \le \sum_{i=0}^\infty {\rm Per}_{\mm_{x,y}^L}(\Omega, A^\varepsilon_i \cap A) + \sum_{i=0}^\infty \frac{3}{2}\varepsilon {\rm Per}_\mm(\Omega, A^\varepsilon_i \cap A) \\
        &\le 2{\rm Per}_{\mm_{x,y}^L}(\Omega,A)+ 3\varepsilon {\rm Per}_\mm(\Omega, B_{x,y}^L \cap A).\\
    \end{aligned}
    \end{equation*}
In the last inequality, we used that $\sum_{i \text{ even}} {\rm Per}_{\mm_{x,y}^L}(\Omega, A^\varepsilon_i \cap A) \le {\rm Per}_{\mm_{x,y}^L}(\Omega,A)$, since $A_i^\varepsilon \cap A_{i+2}^\varepsilon = \emptyset$ for every $i \in \mathbb{N}$ and similarly for the set of odd indexes. The same properties have been used for the estimate of the additional term ${\rm Per}_\mm(\Omega, \cdot)$ in the right hand side. Now we distinguish two cases. If ${\rm Per}_\mm(\Omega, A\cap B_{x,y}^L)=\infty$, then also ${\rm Per}_\mm(\Omega, A)=\infty$  and inequality \eqref{eq:inequality_weighted_perimeter_open} is trivially satisfied. If ${\rm Per}_\mm(\Omega, A\cap B_{x,y}^L)<\infty$ we take the limit as $\varepsilon$ converging $0$ in the previous inequality. In any case
$$\int_{A} R_{x,y}^L\,\d {\rm Per}_\mm(\Omega,\cdot) \leq 2{\rm Per}_{\mm_{x,y}^L}(\Omega,A).$$
In a similar way we get
$$ {\rm Per}_{\mm_{x,y}^L}(\Omega,A)\leq 2\int_A R_{x,y}^L\,\d{\rm Per}_\mm(\Omega,\cdot) + 3\varepsilon{\rm Per}_\mm(\Omega, A\cap B_{x,y}^L).$$
If ${\rm Per}_\mm(\Omega, A\cap B_{x,y}^L) = \infty$ then also $\int_A R_{x,y}^L\,\d{\rm Per}_\mm(\Omega,\cdot) = \infty$ because there exists $m>0$ such that $R_{x,y}^L \geq m$ on $A\cap B_{x,y}^L$. Otherwise, if ${\rm Per}_\mm(\Omega, A\cap B_{x,y}^L) < \infty$, we can let $\varepsilon$ converge to $0$ and obtain
$$ {\rm Per}_{\mm_{x,y}^L}(\Omega,A)\leq 2\int_A R_{x,y}^L\,\d{\rm Per}_\mm(\Omega,\cdot),$$
thus concluding the proof of \eqref{eq:inequality_weighted_perimeter_open}.
The proof of \eqref{eq:inequality_weighted_perimeter_as_measures} follows by the previous step and an application of monotone class theorem.
\end{proof}

\begin{corollary}
    \label{cor:perimeter_comparison_Poincaré}
    Let $(\X,\sfd,\mm)$ be a $p$-{\rm PI} space. Let $x,y\in \X$ and let $\Omega \in \SS_{\textup{top}}(x,y)$. Then
    \begin{equation*}
    \frac{1}{C}\,{\rm Per}_{\mm_{x,y}^L}(\Omega,\X) \le \int_\X R_{x,y}^L\,\d {\rm Per}_\mm(\Omega,\cdot) \le C\,{\rm Per}_{\mm_{x,y}^L}(\Omega,\X),
    \end{equation*}
    where $C = C(C_D,C_P,\lambda)$.
\end{corollary}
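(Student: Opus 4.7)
The plan is to reduce to the geodesic setting in order to apply Lemma \ref{lemma:comparison_perimeter_mmxy}, whose continuity hypothesis is guaranteed by Lemma \ref{lemma:continuity_riesz_potential} only for geodesic doubling spaces, and then to transfer the resulting three-term inequality back to the original metric via bi-Lipschitz comparison.

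First, by Remark \ref{rem:bilipschitz_equivalence} there exists an equivalent metric $\sfd'$ on $\X$ with $\sfd \le \sfd' \le L_0\,\sfd$, where $L_0$ depends only on the structural constants, such that $(\X,\sfd')$ is geodesic and $(\X,\sfd',\mm)$ is again a $p$-PI space with quantitatively controlled constants. Applying Lemma \ref{lemma:continuity_riesz_potential} to $(\X,\sfd',\mm)$ shows that the Riesz potential $R_{x,y}^{\sfd'}$ is continuous on $\X\setminus\{x,y\}$, and in particular $R_{x,y}^{L,\sfd'}$ is continuous on the open set $B_{x,y}^{L,\sfd'}\setminus\{x,y\}$. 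Hence the continuity hypothesis of Lemma \ref{lemma:comparison_perimeter_mmxy} is met in $(\X,\sfd',\mm)$, and applying it with $A=\X$ yields
\begin{equation*}
\tfrac{1}{2}\,{\rm Per}_{\mm_{x,y}^{L,\sfd'},\,\sfd'}(\Omega,\X) \,\le\, \int_\X R_{x,y}^{L,\sfd'}\,\d {\rm Per}_{\mm,\,\sfd'}(\Omega,\cdot) \,\le\, 2\,{\rm Per}_{\mm_{x,y}^{L,\sfd'},\,\sfd'}(\Omega,\X).
\end{equation*}

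Next, I would transfer each of the three terms above from $\sfd'$ back to $\sfd$ at the cost of a multiplicative factor depending only on the structural constants. Remark \ref{rem:biLipschitz_change}(ii) gives directly that ${\rm Per}_{\nu,\sfd}$ and ${\rm Per}_{\nu,\sfd'}$ are $L_0$-comparable as Borel measures for any measure $\nu$, which handles the dependence of the perimeter on the metric. For the Riesz potentials, the $L_0$-equivalence of the metrics combined with the doubling property of $\mm$ yields the pointwise comparison $C_1^{-1}\,R_{x,y}^{\sfd}(z)\le R_{x,y}^{\sfd'}(z)\le C_1\,R_{x,y}^{\sfd}(z)$ for all $z\ne x,y$, with $C_1=C_1(C_D,L_0)$. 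Taking $L'=L\,L_0^2$ ensures $\overline{B}_{x,y}^{L,\sfd}\subseteq \overline{B}_{x,y}^{L',\sfd'}$ and vice versa after swapping $L$ and $L'$, so the truncations are controlled as well, giving that $\mm_{x,y}^{L,\sfd}$ and $\mm_{x,y}^{L',\sfd'}$ are comparable as measures up to structural constants.

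Combining these pointwise and measure-theoretic comparisons with the $\sfd'$-inequality above produces the claimed two-sided bound on the $\sfd$-side, with a constant $C$ depending only on $C_D,C_P,\lambda$. The main technical point is bookkeeping: each of the three quantities in the inequality depends on the metric through both the Riesz weight and the truncation region, so one must verify that enlarging $L$ to some $L'=L'(L,L_0)$ does not break the inequalities; this is harmless because $R_{x,y}^{L,\sfd}$ is monotone nondecreasing in $L$ and the pointwise comparisons of Riesz potentials extend to the truncated ones on the common supports. Once this is in place the conclusion is immediate.
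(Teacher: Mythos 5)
Your route is exactly the paper's: pass to the geodesic metric $\sfd'$ of Remark \ref{rem:bilipschitz_equivalence}, verify the continuity hypothesis of Lemma \ref{lemma:comparison_perimeter_mmxy} via Lemma \ref{lemma:continuity_riesz_potential}, apply the lemma in $(\X,\sfd',\mm)$, and transfer back with Remark \ref{rem:biLipschitz_change}; and you are right that the transfer is where the real bookkeeping lies (the paper's own proof is just these three citations). However, your resolution of that bookkeeping contains a false step: the claim that, for $L'=L L_0^2$, the measures $\mm_{x,y}^{L,\sfd}$ and $\mm_{x,y}^{L',\sfd'}$ are two-sided comparable up to structural constants. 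From $\sfd\le\sfd'\le L_0\sfd$ one only gets the one-sided inclusions $B_{x,y}^{L,\sfd}\subseteq B_{x,y}^{LL_0,\sfd'}$ and $B_{x,y}^{L,\sfd'}\subseteq B_{x,y}^{LL_0,\sfd}$; for any fixed pair of parameters the $\sfd'$-truncated measure charges the region $B_{x,y}^{L',\sfd'}\setminus B_{x,y}^{L,\sfd}$, where $\mm_{x,y}^{L,\sfd}$ vanishes identically, so no reverse domination is possible. For the same reason the truncated potentials $R^{L,\sfd}_{x,y}$ and $R^{L'',\sfd'}_{x,y}$ are never two-sided pointwise comparable unless the truncation sets coincide. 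Monotonicity of $R^{L}_{x,y}$ in $L$ then only yields chains in which $L$ gets enlarged on one side, e.g.\ ${\rm Per}_{\mm^{L,\sfd}_{x,y}}(\Omega,\X)\le C\int R^{LL_0^2,\sfd}_{x,y}\,\d{\rm Per}_{\mm,\sfd}(\Omega,\cdot)$, and not the asserted two-sided bound with the \emph{same} $L$ on both sides, which is what the statement (and its use in Theorem \ref{theo:main-intro-p=1-riproposed}) requires. So as written the final step does not close.

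The gap is repairable while keeping your structure. The point is to keep the $\sfd$-truncation region fixed and change only the potential and the metric used for the perimeter: set $w:=\chi_{B_{x,y}^{L,\sfd}}\,R^{\sfd'}_{x,y}$. Your pointwise comparison of the untruncated potentials (which is correct, using only doubling and bi-Lipschitz equivalence) gives $C_1^{-1}R^{L,\sfd}_{x,y}\le w\le C_1 R^{L,\sfd}_{x,y}$, and since the perimeter is monotone (up to the obvious constant) in the reference measure and is $L_0$-comparable under $\sfd\leftrightarrow\sfd'$ by Remark \ref{rem:biLipschitz_change}, the corollary reduces to comparing ${\rm Per}_{w\mm,\sfd'}(\Omega,\X)$ with $\int w\,\d{\rm Per}_{\mm,\sfd'}(\Omega,\cdot)$. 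This is not literally Lemma \ref{lemma:comparison_perimeter_mmxy}, but its proof applies verbatim to the weight $w$: the set $B_{x,y}^{L,\sfd}$ is open in the common topology, and $R^{\sfd'}_{x,y}$ is continuous and locally bounded away from $0$ and $\infty$ on it away from the poles by Lemma \ref{lemma:continuity_riesz_potential}, which is all the level-set argument uses. (Alternatively, if you accept an enlargement $L\mapsto LL_0^2$ in the conclusion, your one-sided comparisons do suffice, and that weaker form would still serve the implication from (BP) to (BP$_\textup{R}$); but then you should state it that way rather than claim comparability of the two Riesz measures.)
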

\begin{proof}
    Using Remark \ref{rem:bilipschitz_equivalence} we find a new metric $\sfd'$ on $\X$ which is geodesic and $C_0$-equivalent to $\sfd$, with $C_0$ depending only on the structural constants.  Lemma \ref{lemma:comparison_perimeter_mmxy} can be applied to the metric measure space $(\X,\sfd',\mm)$, because of Lemma \ref{lemma:continuity_riesz_potential}. We conclude by applying Remark \ref{rem:biLipschitz_change}.
\end{proof}

We now move to a similar comparison between the two possible versions of the codimension Hausdorff measures. Let us fix some notation. Let $f\colon \X \times [0,+\infty) \to [0,+\infty)$ be a gauge function. For $A\subseteq \X$ and $\delta > 0$ we set 
$$\mathcal{H}_\delta^f(A) := \inf\left\lbrace \sum_{i=1}^\infty f(z_i,r_i) \text{ s.t. } A \subseteq \bigcup_i B_{r_i}(z_i),\, r_i \leq \delta \right\rbrace$$
and
$$\mathcal{H}^f(A) := \lim_{\delta \to 0} \mathcal{H}_\delta^f(A).$$
\begin{lemma}
    $\mathcal{H}^f$ is a measure on the Borel sets.
\end{lemma}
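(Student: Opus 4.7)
The plan is to apply the standard Carathéodory construction: first show that $\mathcal{H}^f$ is an outer measure on all subsets of $\X$, then verify it is a \emph{metric} outer measure, and finally invoke Carathéodory's criterion to conclude that the Borel $\sigma$-algebra lies inside the $\mathcal{H}^f$-measurable sets.

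First, I would check that for each fixed $\delta > 0$ the set function $\mathcal{H}^f_\delta$ is an outer measure. Monotonicity is immediate since any admissible cover of a larger set restricts to an admissible cover of a subset. Countable subadditivity follows by concatenating near-optimal covers: given $A \subseteq \bigcup_n A_n$ and $\varepsilon>0$, pick for each $n$ a cover $\{B_{r^n_i}(z^n_i)\}_i$ of $A_n$ with $r^n_i \leq \delta$ and $\sum_i f(z^n_i, r^n_i) \le \mathcal{H}^f_\delta(A_n) + \varepsilon\, 2^{-n}$; the union is an admissible cover of $A$. Since $\mathcal{H}^f_\delta$ is nondecreasing in $\delta \to 0$, the pointwise limit $\mathcal{H}^f$ inherits monotonicity and, by passing the limit through the supremum, countable subadditivity.

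Next, the key step: I would verify that $\mathcal{H}^f$ is a metric outer measure, i.e.\ $\mathcal{H}^f(A \cup B) = \mathcal{H}^f(A) + \mathcal{H}^f(B)$ whenever $\sfd(A,B) > 0$. The inequality $\leq$ is subadditivity. For $\geq$, fix $\delta < \sfd(A,B)/2$; then any ball $B_r(z)$ with $r \leq \delta$ meets at most one of $A$ and $B$. Given a near-optimal cover of $A \cup B$ at scale $\delta$, we may split it into the balls hitting $A$ and those hitting $B$ (discarding the rest), obtaining admissible covers of $A$ and $B$ respectively whose $f$-sums add to at most the total, yielding $\mathcal{H}^f_\delta(A) + \mathcal{H}^f_\delta(B) \leq \mathcal{H}^f_\delta(A \cup B)$. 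Letting $\delta \to 0$ gives the desired inequality.

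Finally, by Carathéodory's criterion (see e.g.\ the standard treatment in \cite{HKST15}), every metric outer measure restricts to a measure on the Borel $\sigma$-algebra, which is exactly the claim. The main point of the argument is really the metric-outer-measure step; everything else is bookkeeping with infima and monotone limits. I do not expect any serious obstacle since $f$ is not required to satisfy any regularity beyond nonnegativity — the construction is purely combinatorial in the covers.
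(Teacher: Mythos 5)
Your proposal is correct and follows the same route as the paper: the paper's proof likewise observes that $\mathcal{H}^f$ is an outer measure, notes additivity on sets at positive distance, and concludes via Carathéodory's criterion for metric outer measures (cited there from Evans--Gariepy). Your write-up merely fills in the routine verifications that the paper leaves implicit.
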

\begin{proof}
    From the definition it is an outer measure on $\X$. Moreover if $A,B \subseteq \X$ are separated, i.e. $\sfd(A,B) > 0$, then $\mathcal{H}^f(A \cup B) = \mathcal{H}^f(A) + \mathcal{H}^f(B)$. By \cite[§1.1, Theorem 5]{Evans} every Borel set is measurable.
\end{proof}

Let $p\geq 1$, $x,y \in \X$ and $L\geq 1$. We consider two gauge functions:
$$f(z,r):= \frac{\mm_{x,y}^L(B_r(z))}{r^p}, \qquad \qquad g(z,r) := R_{x,y}^L(z)\cdot \frac{\mm(B_r(z))}{r^p}.$$

\begin{lemma}
\label{lemma:comparison_hausdorff_1}
    Let $(\X,\sfd,\mm)$ be a proper metric measure space. Let $x,y \in \X$, $L,p\geq 1$. Let $A \subseteq \X$ such that $\min \lbrace \sfd(x,A), \sfd(y,A)\rbrace > 0$. Assume $R_{x,y}$ is continuous on $\overline{B}_{x,y}^L \setminus \lbrace x,y\rbrace$. Then
    \begin{equation}
        \codH{p}_{\mm_{x,y}^L}(A) = \mathcal{H}^{f}(A) = \mathcal{H}^{g}(A).
    \end{equation}
\end{lemma}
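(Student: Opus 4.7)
The first equality $\codH{p}_{\mm_{x,y}^L}(A) = \mathcal{H}^f(A)$ is essentially tautological. Indeed the gauge used to define $\codH{p}_{\mm_{x,y}^L}$ is precisely $h_p(B_r(z)) = \mm_{x,y}^L(B_r(z))/r^p = f(z,r)$; the only nominal discrepancy is that the original definition uses the strict inequality $\sup_j r_j < \delta$ while $\mathcal{H}^f_\delta$ uses $r_i \le \delta$, but this is irrelevant after sending $\delta \to 0$.

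For the substantive content $\mathcal{H}^f(A) = \mathcal{H}^g(A)$, my plan is to exploit the uniform continuity of $R_{x,y}$ on a compact neighborhood of $A$ staying away from $\{x,y\}$, so that on balls of small radius the density $R_{x,y}^L$ is almost constant. Concretely, choose $r_0 > 0$ with $\overline{B}_{r_0}(A)$ disjoint from $\{x,y\}$, set $K := \overline{B}_{r_0}(A \cap \overline{B}_{x,y}^L) \cap \overline{B}_{x,y}^L$, which is compact since $\X$ is proper and $\overline{B}_{x,y}^L$ is bounded. On $K$, the function $R_{x,y}$ is continuous, bounded, and bounded below by some $m > 0$, hence uniformly continuous: for every $\varepsilon > 0$ there is $\eta \in (0,r_0/2)$ such that $\sfd(z,z') < \eta$ forces $|R_{x,y}(z) - R_{x,y}(z')| < \varepsilon$ for $z, z' \in K$.

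Now, fix a covering $\{B_{r_i}(z_i)\}$ of $A$ with $r_i < \eta$. Balls not meeting $A$ are discarded. Balls intersecting $A \setminus \overline{B}_{x,y}^L$ and having radius smaller than the distance of their intersection point to $\overline{B}_{x,y}^L$ lie entirely in the complement of $\overline{B}_{x,y}^L$, and so contribute $0$ to both $\sum f(z_i,r_i)$ and $\sum g(z_i,r_i)$. The remaining balls have centers in $K$, and for such a ball
\begin{equation*}
f(z_i,r_i) = \frac{1}{r_i^p}\int_{B_{r_i}(z_i)\cap B_{x,y}^L} R_{x,y}\,\d\mm,
\end{equation*}
while uniform continuity gives $|R_{x,y}(z') - R_{x,y}(z_i)| \le \varepsilon$ for every $z' \in B_{r_i}(z_i) \cap B_{x,y}^L$. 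Combined with $R_{x,y}(z_i) \ge m$, this sandwiches $f(z_i,r_i)$ between $(1\pm \varepsilon/m)\,g(z_i,r_i)$ whenever $z_i \in B_{x,y}^L$. Summing over the cover and infimizing, then sending $\varepsilon \to 0$ and $\delta \to 0$, delivers $\mathcal{H}^f(A) = \mathcal{H}^g(A)$.

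The main obstacle I anticipate is the borderline case where a covering ball is centered on $\partial B_{x,y}^L$: then $R_{x,y}^L(z_i) = 0$ by convention (since $B_{x,y}^L$ is open), so $g(z_i,r_i) = 0$, whereas $\mm_{x,y}^L(B_{r_i}(z_i))$ may be positive because the ball protrudes into $B_{x,y}^L$. The resolution is to perturb such a center slightly into the open set $B_{x,y}^L$, enlarging the radius by at most a negligible amount; by continuity of $R_{x,y}$, this perturbation changes $f$ and $g$ by a factor $1+o(1)$ as the perturbation shrinks, and restores the comparison argument above. This perturbation step is the one technical point that requires care, but is quantitatively controlled by the same uniform continuity modulus $\eta = \eta(\varepsilon)$ already in play.
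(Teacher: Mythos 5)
Your interior comparison (uniform continuity of $R_{x,y}$ on a compact set avoiding $\lbrace x,y\rbrace$, so that the density is almost constant on small balls) is exactly the paper's mechanism, and the first equality is indeed tautological. The genuine gap is at the point you flag, and the proposed fix does not close it. First, the two-sided sandwich $(1-\varepsilon/m)\,g(z_i,r_i)\le f(z_i,r_i)\le(1+\varepsilon/m)\,g(z_i,r_i)$ for $z_i\in B_{x,y}^L$ is false for a ball that protrudes across $\partial B_{x,y}^L$: there $f$ integrates $R_{x,y}$ only over $B_{r_i}(z_i)\cap B_{x,y}^L$, which can be an arbitrarily small fraction of the ball, so the lower inequality fails; such balls can perfectly well occur in a near-optimal covering when $A$ comes close to $\partial B_{x,y}^L$. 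Likewise your claim that ``the remaining balls have centers in $K$'' is unjustified: a ball meeting $A\cap\overline{B}_{x,y}^L$ may be centered outside $\overline{B}_{x,y}^L$, and a ball meeting only $A\setminus\overline{B}_{x,y}^L$ may still protrude into $B_{x,y}^L$; nothing forces its radius to be smaller than the distance of the intersection point to $\overline{B}_{x,y}^L$. Second, the perturbation cannot repair the direction $\mathcal{H}^{f}(A)\le\mathcal{H}^{g}(A)$: there the covering is (near-)optimal for $\mathcal{H}^{g}_\delta$, and replacing a ball with $g(z_i,r_i)=0$ by a nearby ball centered inside $B_{x,y}^L$ raises its $g$-value from $0$ to a quantity of the same order as a generic term, so the modified covering's $g$-sum is no longer comparable to $\mathcal{H}^{g}_\delta(A)$ and the chain of inequalities breaks. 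Moreover, the assertion that the perturbation changes $f$ and $g$ only by a factor $1+o(1)$ would require comparing $\mm(B_{r'}(z'))$ with $\mm(B_{r}(z_0))$ under a shift of center and enlargement of radius, which continuity of $R_{x,y}$ does not provide (no doubling is assumed in this lemma, only properness).

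The missing idea is the paper's treatment of the boundary, which is different in kind: one first proves the comparison for the inner truncations $A_\tau:=A\cap\overline{B}_{x,y}^{L-\tau}$, using only coverings with radii at most $\min\lbrace\eta,\tau\sfd(x,y)/2\rbrace$; after discarding balls that miss $A_\tau$, every remaining ball is entirely contained in $B_{x,y}^L$, so the termwise comparison holds in both directions with factor $1\pm M\varepsilon$ and no boundary ball ever appears. One then lets $\tau\to 0$, and this is precisely where the preceding lemma, asserting that $\mathcal{H}^{f}$ and $\mathcal{H}^{g}$ are Borel measures, is used (continuity from below along $A_\tau$). To salvage your write-up you should replace the perturbation step by this exhaustion-plus-measure argument, or otherwise prove that the part of $A$ lying within distance $\tau$ of $\partial B_{x,y}^L$ contributes negligibly to both $\mathcal{H}^{f}$ and $\mathcal{H}^{g}$; near $\partial B_{x,y}^L$ the two gauges are genuinely not pointwise comparable, so no purely termwise argument can work there.
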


\begin{proof}
    The first equality is the definition of $\codH{p}_{\mm_{x,y}^L}(A)$. We focus on the other equality. We fix $A\subseteq \X$ as in the assumptions. Let us consider $r \ll 1$ so that $\overline{B}_r(A) \subset \X \setminus \lbrace x,y \rbrace$. By assumption the function $R_{x,y}$ is continuous at every point of $\overline{B}_{x,y}^L \setminus \lbrace x,y \rbrace$, so it is uniformly continuous on the compact set $\overline{B}_r(A) \cap \overline{B}_{x,y}^L$. For every $\varepsilon> 0$, there exists $\eta > 0$ such that if $z,z'\in \overline{B}_r(A) \cap \overline{B}_{x,y}^L$ and $\sfd(z,z') <\eta$ then $|R_{x,y}(z) - R_{x,y}(z')| \le \varepsilon$. 
    We can always assume $\eta \le r$. Moreover $R_{x,y}$ is continuous and positive on $\overline{B}_r(A) \cap \overline{B}_{x,y}^L$, so there exists a constant $M$ such that 
    \begin{equation}
        \label{eq:boundedness_riesz_intheproofofcomparisonH}
        M^{-1}\le R_{x,y}\le M \text { on }\overline{B}_r(A) \cap \overline{B}_{x,y}^L.
    \end{equation}
    We prove first that $\mathcal{H}^{f}(A) \le \mathcal{H}^{g}(A)$. Fix $\varepsilon > 0$ and find the corresponding $\eta > 0$ as above. Fix also an arbitrary $\tau > 0$ and consider the set $A_\tau := A\cap \overline{B}_{x,y}^{L-\tau}$.
    We fix $\delta \le \min\lbrace\eta, \frac{\tau \sfd(x,y)}{2}\rbrace$ and we take a collection $\{ B_{r_i}(z_i)\}_{i=1}^{\infty}$ such that $A_\tau \subset \cup_{i=1}^\infty B_{r_i}(z_i)$ and $r_i \le \delta$. We can suppose that $\sfd(z_i,A_\tau) \leq \frac{\tau \sfd(x,y)}{2}$, otherwise $B_{r_i}(z_i) \cap A_\tau = \emptyset$ and we can remove $B_{r_i}(z_i)$ from the covering. Therefore $B_{r_i}(z_i) \subseteq B_{x,y}^L$ and $R_{x,y}^L = R_{x,y}$ on $B_{x,y}^L$.    
    By uniform continuity we have
    \begin{equation}
        \left| \int_{B_{r_i}(z_i)} R_{x,y}^L\,\d \mm - R_{x,y}^L(z_i) \mm(B_{r_i}(z_i))\right| \le \varepsilon \mm(B_{r_i}(z_i))
    \end{equation}
    Thus, summing up, we get
    \begin{equation}
    \label{eq:sum_estimate_riesz_hausdorff}
        \left| \sum_{i=1}^{\infty}\frac{\int_{B_{r_i}(z_i)} R_{x,y}^L\,\d \mm}{r_i^p} - \sum_{i=1}^{\infty}\frac{R_{x,y}^L(z_i) \mm(B_{r_i}(z_i))}{r_i^p} \right| \le \varepsilon \sum_{i=1}^{\infty}\frac{\mm(B_{r_i}(z_i))}{r_i^p}.
    \end{equation}
    From \eqref{eq:sum_estimate_riesz_hausdorff} and the fact that  $1 \le M R_{x,y}^L(z_i)$ for every $i$, we get
    \begin{equation}
        \sum_{i=1}^{\infty}\frac{\int_{B_{r_i}(z_i)} R_{x,y}^L\,\d \mm}{r_i^p} \le \left( 1+M\varepsilon \right)\,\sum_{i=1}^{\infty}\frac{R_{x,y}^L(z_i) \mm(B_{r_i}(z_i))}{r_i^p}. 
    \end{equation}
    Taking the infimum over all possible coverings with balls of radius smaller than $\delta$, this gives $\mathcal{H}_{\delta}^{f}(A_\tau) \le (1+M\varepsilon)\,\mathcal{H}_{\delta}^{g}(A_\tau)$. Taking the limit as $\delta \to 0$, we get that $\mathcal{H}^{f}(A_\tau) \le (1+M\varepsilon)\,\mathcal{H}^{g}(A_\tau)$ and then, as $\varepsilon$ goes to zero, we have $\mathcal{H}^{f}(A_\tau) \le \mathcal{H}^{g}(A_\tau)$. Now we can pass to the limit for $\tau$ going to zero to conclude that $\mathcal{H}^{f}(A) \le \mathcal{H}^{g}(A)$, because both $\mathcal{H}^{f}$ and $\mathcal{H}^{g}$ are measures.
    The converse inequality $\mathcal{H}^{g}(A) \le \mathcal{H}^{f}(A)$ follows by considering the other inequality given by \eqref{eq:sum_estimate_riesz_hausdorff} and using a similar argument. \\
\end{proof}

The next step is to link $\mathcal{H}^g$ to $R_{x,y}^L\codH{p}_\mm$.

\begin{lemma}
    \label{lemma:comparison_hausdorff_2}
    Let $(\X,\sfd,\mm)$ be a proper metric measure space. Let $x,y\in \X$, $L,p\geq 1$. Let $A\subseteq \X$ such that $\min\lbrace \sfd(x,A), \sfd(y,A)\rbrace > 0$.
    Assume $R_{x,y}$ is continuous on $\overline{B}_{x,y}^L\setminus \lbrace x,y \rbrace$. Then
    \begin{equation}
    \label{eq:comparison_codH_with_Riesz_in_the_middle}
    \frac{1}{2} \int_{A} R_{x,y}^L \,\d \codH{p}_\mm \le       \mathcal{H}^g(A) \le 2 \int_A R_{x,y}^L \,\d \codH{p}_\mm.
    \end{equation}
\end{lemma}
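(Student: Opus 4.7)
The plan is to mirror the slicing argument of Lemma~\ref{lemma:comparison_perimeter_mmxy}, with the covering definition of the Hausdorff-type pre-measures playing the role that the variational definition of the perimeter played there. A preliminary reduction removes the points of $A$ outside $B_{x,y}^L$: on the complement $R_{x,y}^L = 0$ (so the right-hand side vanishes), and any such point sits in a small ball centered in $(\overline{B}_{x,y}^L)^c$ on which $g \equiv 0$, so these balls contribute $0$ to covers computing $\mathcal{H}^g$. A further inner approximation reduces to $A$ whose closure is a compact subset of $B_{x,y}^L$ at positive distance from $\{x,y\}$. On such a set $R_{x,y} = R_{x,y}^L$ is uniformly continuous and satisfies $M^{-1} \le R_{x,y}^L \le M$ on a small enlargement $\overline{B}_r(A) \cap \overline{B}_{x,y}^L$, exactly as in \eqref{eq:boundedness_riesz_intheproofofcomparisonH}.

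Fix $\varepsilon > 0$, let $\eta > 0$ be the uniform continuity modulus corresponding to tolerance $\varepsilon/4$, and define the open slices
\[
A_i^\varepsilon := \left\{ z \in B_{x,y}^L :\,\left(i-1-\tfrac{1}{4}\right)\varepsilon < R_{x,y}^L(z) < \left(i+\tfrac{1}{4}\right)\varepsilon \right\},\quad i \in \mathbb{N},
\]
as in Lemma~\ref{lemma:comparison_perimeter_mmxy}. They cover $B_{x,y}^L \setminus \{x,y\}$, and $A_i^\varepsilon \cap A_{i+2}^\varepsilon = \emptyset$, so the even- and odd-indexed slices form two disjoint subfamilies. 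The key pointwise inequality is this: for any $\delta \le \eta/2$, any ball $B_r(z)$ with $r \le \delta$ meeting $A_i^\varepsilon \cap A$ has $R_{x,y}^L(z) \in ((i-1-\tfrac{1}{2})\varepsilon, (i+\tfrac{1}{2})\varepsilon)$ by uniform continuity, hence
\[
\left(i-\tfrac{3}{2}\right)\varepsilon\, \frac{\mm(B_r(z))}{r^p} \le g(z,r) \le \left(i+\tfrac{1}{2}\right)\varepsilon\, \frac{\mm(B_r(z))}{r^p}.
\]
Since balls of a cover that miss the target set can always be discarded without increasing the sum, infimizing over covers of radius $\le \delta$ and letting $\delta \to 0$ gives
\[
\left(i-\tfrac{3}{2}\right)\varepsilon\, \codH{p}_\mm(A_i^\varepsilon \cap A) \le \mathcal{H}^g(A_i^\varepsilon \cap A) \le \left(i+\tfrac{1}{2}\right)\varepsilon\, \codH{p}_\mm(A_i^\varepsilon \cap A).
\]

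To assemble \eqref{eq:comparison_codH_with_Riesz_in_the_middle}, use that on $A_i^\varepsilon$ one has $(i+\tfrac{1}{2})\varepsilon \le R_{x,y}^L + \tfrac{7}{4}\varepsilon$ and $(i-\tfrac{3}{2})\varepsilon \ge R_{x,y}^L - \tfrac{7}{4}\varepsilon$. For the upper bound, subadditivity of $\mathcal{H}^g$ combined with at-most-two-fold overlap yields
\[
\mathcal{H}^g(A) \le \sum_{i} \mathcal{H}^g(A_i^\varepsilon \cap A) \le 2 \int_A R_{x,y}^L \, \d \codH{p}_\mm + \tfrac{7}{2}\varepsilon\, \codH{p}_\mm(A);
\]
for the lower bound, applying the slicewise estimate separately to the disjoint even-index and odd-index subfamilies and adding the two resulting inequalities gives
\[
2 \mathcal{H}^g(A) \ge \int_A R_{x,y}^L \, \d \codH{p}_\mm - \tfrac{7}{2}\varepsilon\, \codH{p}_\mm(A).
\]
The finite/infinite dichotomy is handled as in Lemma~\ref{lemma:comparison_perimeter_mmxy}: if $\int_A R_{x,y}^L \, \d \codH{p}_\mm = \infty$ then $\codH{p}_\mm(A) = \infty$ (from $R_{x,y}^L \le M$), which in turn forces $\mathcal{H}^g(A) = \infty$ via $R_{x,y}^L \ge M^{-1}$ in the lower slicewise estimate, making \eqref{eq:comparison_codH_with_Riesz_in_the_middle} trivial; otherwise $\codH{p}_\mm(A) < \infty$ and sending $\varepsilon \to 0$ completes the proof. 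The main obstacle is precisely this bookkeeping---tracking the factor $2$ through the even/odd decomposition and ensuring the $O(\varepsilon)$ errors vanish in the limit---but it is strictly parallel to the corresponding step in Lemma~\ref{lemma:comparison_perimeter_mmxy}.
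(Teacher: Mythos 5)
Your argument is correct and follows essentially the same route as the paper's proof: slicing $B_{x,y}^L$ into $\varepsilon$-level bands of $R_{x,y}^L$, comparing the two gauges on near-optimal covers via uniform continuity and the two-sided bound $M^{-1}\le R_{x,y}\le M$, then assembling with the even/odd two-fold overlap, an inner truncation, and a finiteness dichotomy. The only detail to make explicit is that, after the inner reduction, $\delta$ (or the enlargement radius $r$) must also be chosen smaller than $\sfd(\overline{A},(B_{x,y}^L)^c)$ so that centers of covering balls meeting $A$ lie in $B_{x,y}^L$ (where $R_{x,y}^L=R_{x,y}$, as needed for the lower per-ball estimate and for covering $A\setminus B_{x,y}^L$ at zero cost one should center balls at points of $(B_{x,y}^L)^c$ rather than $(\overline{B}_{x,y}^L)^c$); this is exactly what the paper's choice $A_\tau=A\cap\overline{B}_{x,y}^{L-\tau}$ with $\delta\le\tau\sfd(x,y)/2$ guarantees.
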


\begin{proof}
    Let us start with the second inequality.
    We fix $A\subseteq \X$ as in the assumptions. Let us consider $r \ll 1$ so that $\overline{B}_r(A) \subset \X \setminus \lbrace x,y \rbrace$. As recalled above the function $R_{x,y}$ is continuous at every point of $\overline{B}_{x,y}^L \setminus \lbrace x,y \rbrace$, so it is uniformly continuous on the compact set $\overline{B}_r(A) \cap \overline{B}_{x,y}^L$. We fix $\varepsilon, \varepsilon' > 0$. Then there exists $\eta > 0$ such that if $\sfd(z,z')\le \eta$ with $z,z' \in \overline{B}_r(A) \cap \overline{B}_{x,y}^L$ then $|R_{x,y}(z)-R_{x,y}(z')|\le \varepsilon'$. 
    We define $A_\tau:=A \cap \overline{B}_{x,y}^{L-\tau}$.
    Let us define $E_j^\varepsilon:=\{ z \in B_{x,y}^L : (j-1 - \frac{1}{4})\varepsilon < R_{x,y}^L(z) < (j + \frac{1}{4}) \varepsilon\}$. Notice that $\bigcup_{j} E_j^\varepsilon = B_{x,y}^L \setminus \lbrace x,y\rbrace$.\\
    Let us fix $0<\delta \leq \min\lbrace r,\eta,\frac{\tau\sfd(x,y)}{2}\rbrace$. 
    We define $\varepsilon_j:=(\left(j + \frac{1}{4}\right) \varepsilon +\varepsilon')^{-1}\,2^{-j}\varepsilon$. Let us consider a collection $\{ B_{r_i^j}(z_i^j)\}_i$ with $r^i_j\le \delta$ for every $i$ and such that
    \begin{equation}
        E_j^\varepsilon \cap A_\tau \subseteq \bigcup_{i=1}^\infty B_{r_i^j}(z_i^j),\qquad \sum_{j} \frac{\mm(B_{r_i^j}(z_i^j))}{(r_i^j)^p} \le \codH{p}_{\mm,\delta}(E_j^\varepsilon \cap A_\tau) + \varepsilon_j.
    \end{equation}
    Notice that $A_{\tau} \subseteq \bigcup_{i,j} B_{r_i^j}(z_i^j)$, thus it is a competitor for the estimate of $\mathcal{H}^g_\delta(A_\tau)$.
    We compute
    {\allowdisplaybreaks
        \begin{align}
            \mathcal{H}^g_\delta(A_\tau) &\le \sum_{i,j}^\infty R_{x,y}^L(z_i^j) \frac{\mm(B_{r_i^j}(z_i^j))}{(r_i^j)^p} \le \sum_j \left(\left(j + \frac{1}{4}\right) \varepsilon +\varepsilon'\right)\sum_{i=1}^\infty \frac{\mm(B_{r_i^j}(z_i^j))}{(r_i^j)^p}\\
        & \le \sum_j \left(\left(j + \frac{1}{4}\right) \varepsilon +\varepsilon'\right)\left(\codH{p}_{\mm,\delta}(E_j^\varepsilon \cap A_\tau) + \varepsilon_j\right)\\
        & \le \sum_j \left(\left(j + \frac{1}{4}\right) \varepsilon +\varepsilon'\right)\codH{p}_{\mm}(E_j^\varepsilon \cap A_\tau) + \sum_j 2^{-j}\varepsilon \\
        &=\sum_j \int_{E_j^\varepsilon \cap A_\tau} \left(\left(j+\frac{1}{4}\right) \varepsilon +\varepsilon'\right)\,\d\codH{p}_{\mm} +\varepsilon \\
        & \le\sum_j \int_{E_j^\varepsilon \cap A_\tau} \left(R_{x,y}^L +\varepsilon'+
        \frac{3}{2}\varepsilon\right)\,\d\codH{p}_{\mm} +\varepsilon\\
        & \le\sum_{j \text{ even}} \int_{E_j^\varepsilon \cap A_\tau} \left(R_{x,y}^L +\varepsilon'+
        \frac{3}{2}\varepsilon\right)\,\d\codH{p}_{\mm} \\
        &+ \sum_{j \text{ odd}} \int_{E_j^\varepsilon \cap A_\tau} \left(R_{x,y}^L +\varepsilon'+
        \frac{3}{2}\varepsilon\right)\,\d\codH{p}_{\mm} +\varepsilon\\
        &\le 2 \int_{A_\tau} R_{x,y}^L \,\d \codH{p}_\mm + (2\varepsilon' + 3\varepsilon) \codH{p}_\mm(A_\tau) + \varepsilon.
        \end{align}
    }
    By taking the limit as $\delta \to 0$ and as $\tau \to 0$, we get
    \begin{equation}
        \mathcal{H}^g(A)\le 2 \int_{A} R_{x,y}^L \,\d \codH{p}_\mm + (2\varepsilon' + 3\varepsilon) \codH{p}_\mm(A) + \varepsilon.
    \end{equation}
    Now we distinguish two cases. If $\codH{p}_\mm(A) =\infty$, then also $\int_A R_{x,y}^L\,\d\codH{p}_\mm = \infty$ and there is nothing to prove.
    If $\codH{p}_\mm(A) < \infty$, we can take the limit as $\varepsilon$, $\varepsilon'$ converging to $0$, concluding the proof of the right inequality in \eqref{eq:comparison_codH_with_Riesz_in_the_middle}.\\
    Let us prove the first inequality. We fix $\varepsilon, \varepsilon' > 0$, $\eta > 0$ and $A_\tau$ as above. We notice that $A_\tau$ intersects a finite number of sets $E_j^\varepsilon$ because $R_{x,y}^L$ is bounded on $A_\tau$. Therefore we can find $\delta' > 0$ such that $\codH{p}_{\mm}(A_\tau \cap E_j^\varepsilon) \leq \codH{p}_{\mm, \delta'}(A_\tau \cap E_j^\varepsilon) + \varepsilon_j$ for every $j$, where $\varepsilon_j:=(\left(j + \frac{1}{4}\right) \varepsilon)^{-1}\,2^{-j}\varepsilon$. Given $\delta < \min\lbrace r,\eta,\delta',\frac{\tau\sfd(x,y)}{2} \rbrace$ we take set of balls $B_{r_i^j}(z_i^j)$ such that $A_\tau \cap E_j^\varepsilon \subseteq \bigcup_i B_{r_i^j}(z_i^j)$, $r_i^j \leq \delta$ and $\sum_i R_{x,y}^L(z_i^j) \frac{\mm(B_{r_i^j}(z_i^j))}{(r_i^j)^p} \leq \mathcal{H}^g(A_\tau \cap E_j^\varepsilon) + \varepsilon$. Computing as before we get
    \begin{equation*}
        \begin{aligned}
        \int_{A_\tau}R_{x,y}^L\,\d\codH{p}_\mm \leq \sum_j \int_{A_\tau \cap E_j^\varepsilon}R_{x,y}^L\,\d\codH{p}_\mm \leq \sum_j \left(j+\frac{1}{4}\right)\varepsilon\,(\codH{p}_{\mm,\delta'}(A_\tau \cap E_j^\varepsilon) + \varepsilon_j).
        \end{aligned}
    \end{equation*}
    The balls $B_{r_i^j}(z_i^j)$ are admissible for the computation of $\codH{p}_{\mm,\delta'}(A_\tau \cap E_j^\varepsilon)$, so we get
    \begin{equation*}
        \begin{aligned}
        \int_{A_\tau}R_{x,y}^L\,\d\codH{p}_\mm &\leq \sum_j \sum_i\left( R_{x,y}^L(z_i^j) +\frac{3}{2}\varepsilon + \varepsilon'\right)\,\frac{\mm(B_{r_i^j}(z_i^j))}{(r_i^j)^p} + \varepsilon \\
        &\leq \sum_j \left(1 + \left(\frac{3}{2}\varepsilon + \varepsilon'\right)M\right)\mathcal{H}^g(A_\tau \cap E_j^\varepsilon) + \varepsilon,
        \end{aligned}
    \end{equation*}
    where $M$ is such that $M^{-1} \le R_{x,y}^L \le M$ on $\overline{B}_r(A_\tau) \cap \overline{B}_{x,y}^L$. By treating separately odd and even indexes, arguing as in the first part of the proof, we get
    \begin{equation*}
        \begin{aligned}
        \int_{A_\tau}R_{x,y}^L\,\d\codH{p}_\mm \leq 2\left(1 + \left(\frac{3}{2}\varepsilon + \varepsilon'\right)M\right)\mathcal{H}^g(A_\tau) + \varepsilon.
        \end{aligned}
    \end{equation*}
    Now we can take $\varepsilon, \varepsilon', \tau \to 0$ to get the thesis.
\end{proof}

Putting everything together we finally get the equivalent of Corollary \ref{cor:perimeter_comparison_Poincaré} for the codimension Hausdorff measures.

\begin{corollary}
\label{cor:codH_comparison_Poincaré}
Let $(\X,\sfd,\mm)$ be a $q$-{\rm PI} space for some $q \ge 1$. Let $x,y\in \X$, $L,p\geq 1$ and $\Omega \in \SS_{\textup{top}}(x,y)$. Then
    \begin{equation*}
    \frac{1}{C}\,\codH{p}_{\mm_{x,y}^L}(\partial \Omega) \le \int_{\partial \Omega} R_{x,y}^L\,\d\codH{p}_\mm \le C\,\codH{p}_{\mm_{x,y}^L}(\partial \Omega),
    \end{equation*}
    where $C = C(C_D,C_P,\lambda)$.
\end{corollary}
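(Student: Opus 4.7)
The plan is to derive this corollary by chaining Lemmas \ref{lemma:comparison_hausdorff_1} and \ref{lemma:comparison_hausdorff_2}, mirroring the strategy used to deduce Corollary \ref{cor:perimeter_comparison_Poincaré} from Lemma \ref{lemma:comparison_perimeter_mmxy}. First I would note that, since $\Omega \in \SS_{\textup{top}}(x,y)$, Definition \ref{def:separating_sets} yields $\min\{\sfd(x,\partial\Omega), \sfd(y,\partial\Omega)\} > 0$, so $\partial \Omega$ satisfies the geometric hypothesis $\min\{\sfd(x,A),\sfd(y,A)\}>0$ required by both lemmas.

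Next, since $(\X,\sfd,\mm)$ is a $q$-PI space, Remark \ref{rem:bilipschitz_equivalence} supplies a new metric $\sfd'$ on $\X$ which is geodesic and $C_0$-equivalent to $\sfd$, with $C_0$ depending only on the structural constants, and the space $(\X,\sfd',\mm)$ remains doubling by Remark \ref{rem:biLipschitz_change}. In this geodesic setting Lemma \ref{lemma:continuity_riesz_potential} ensures that the associated Riesz potential is continuous away from $\{x,y\}$, so Lemmas \ref{lemma:comparison_hausdorff_1} and \ref{lemma:comparison_hausdorff_2} both apply. Chaining them in the metric $\sfd'$ yields
\begin{equation*}
\tfrac{1}{2}\,\codH{p}_{\mm_{x,y,\sfd'}^L, \sfd'}(\partial \Omega) \le \int_{\partial \Omega} R_{x,y,\sfd'}^L\,\d\codH{p}_{\mm,\sfd'} \le 2\,\codH{p}_{\mm_{x,y,\sfd'}^L, \sfd'}(\partial \Omega).
\end{equation*}

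The final step is to transport this estimate back to the original metric $\sfd$. Remark \ref{rem:biLipschitz_change} already handles the comparison of codimension-$p$ Hausdorff measures built from a fixed background measure under the equivalent metrics $\sfd$ and $\sfd'$, at the cost of a constant depending only on $C_0$ and $p$. What remains is to compare the Riesz potentials $R_{x,y,\sfd'}^L$ and $R_{x,y,\sfd}^L$ and the associated measures $\mm_{x,y,\sfd'}^L$ and $\mm_{x,y,\sfd}^L$; using the bilipschitz relation between $\sfd$ and $\sfd'$ together with the doubling property of $\mm$, one sees that these Riesz potentials agree pointwise up to a multiplicative constant depending only on $C_0$ and $C_D$, provided one allows a controlled enlargement of the truncation parameter $L$ which can be absorbed into the final constant. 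Putting all of this together produces a constant $C=C(C_D,C_P,\lambda)$ for which the two-sided inequality in the statement holds.

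The main technical obstacle is precisely this last bookkeeping: unlike the codimension Hausdorff measures for a fixed measure, the Riesz measure $\mm_{x,y}^L$ genuinely depends on the metric, both through the distances $\sfd(x,\cdot),\sfd(y,\cdot)$ and through the metric balls used to define it, so one must verify that switching to the bilipschitz-equivalent $\sfd'$ distorts this weighting only by a factor controlled by the structural constants. Once this verification is in place, the argument collapses to the routine composition of Lemmas \ref{lemma:comparison_hausdorff_1} and \ref{lemma:comparison_hausdorff_2} in the geodesic model and an application of Remark \ref{rem:biLipschitz_change}.
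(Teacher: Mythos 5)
Your proposal follows exactly the paper's proof, which obtains the corollary from Lemmas \ref{lemma:comparison_hausdorff_1} and \ref{lemma:comparison_hausdorff_2} after passing to the geodesic metric supplied by Remark \ref{rem:bilipschitz_equivalence} (so that Lemma \ref{lemma:continuity_riesz_potential} gives continuity of the Riesz potential) and transferring back via Remark \ref{rem:biLipschitz_change}. The extra point you flag---that $\mm_{x,y}^L$ itself depends on the metric, so the Riesz potentials and the truncation parameter must also be compared between $\sfd$ and $\sfd'$---is precisely the bookkeeping the paper leaves implicit in its ``usual combination'' of the two remarks, so your write-up is, if anything, more explicit than the paper's.
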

\begin{proof}
    It follows by the usual combination of Remarks \ref{rem:bilipschitz_equivalence} and \ref{rem:biLipschitz_change} together with the lemmas \ref{lemma:comparison_hausdorff_1} and \ref{lemma:comparison_hausdorff_2}.
\end{proof}

\section{Proof of the main theorem}
\label{sec:main_theorem}
We now provide the proof of our main result, that we write here in full generality. 
\begin{theorem}
\label{theo:main-intro-p=1-riproposed}
Let $(\X,\sfd,\mm)$ be a doubling metric measure space. Then the following conditions are quantitatively equivalent:
\begin{itemize}
    \item[(PI)] $1$-Poincar\'{e} inequality;
    \item[(BP)] $\exists c>0, L\geq 1$ such that $\int R_{x,y}^L\,\d {\rm Per}_\mm(\Omega, \cdot) \ge c$ for every $x, y \in \X$ and $\Omega \in \SS_{\textup{top}}(x,y)$;
    \item[(BP$_\textup{R}$)] $\exists c>0, L\geq 1$ such that ${\rm Per}_{\mm_{x,y}^L}(\Omega) \ge c$ for every $x, y \in \X$ and $\Omega \in \SS_{\textup{top}}(x,y)$;
    \item[(BC)] $\exists c>0, L\geq 1$ such that $\textup{cap}_{\mm_{x,y}^L}(\Omega) \ge c$ for every $x, y \in \X$ and   $\Omega \in \SS_{\textup{top}}(x,y)$;
    \item[(BMC)] $\exists c>0, L\geq 1$ such that $(\mm_{x,y}^L)^{+,1}(\Omega) \ge c$ for every $x, y \in \X$ and  $\Omega \in \SS_{\textup{top}}(x,y)$;
    \item[(BH)] $\exists c>0, L\geq 1$ such that $\int_{\partial \Omega} R_{x,y}^L\,\d \codH{1}_\mm \ge c$ for every $x, y \in \X$ and  $\Omega \in \SS_{\textup{top}}(x,y)$;
    \item[(BH$^e$)] $\exists c>0, L\geq 1$ such that $\int_{\partial^e \Omega} R_{x,y}^L\,\d \codH{1}_\mm \ge c$ for every $x, y \in \X$ and  $\Omega \in \SS_{\textup{top}}(x,y)$;
    \item[(BH$_\textup{R}$)] $\exists c>0, L\geq 1$ such that ${\codH{1}_{\mm_{x,y}^L}}(\partial \Omega) \ge c$ for every $x, y \in \X$ and   $\Omega \in \SS_{\textup{top}}(x,y)$;
    \item[(BH$^e_\textup{R}$)] $\exists c>0, L\geq 1$ such that ${\codH{1}_{\mm_{x,y}^L}}(\partial^e \Omega) \ge c$ for every $x, y \in \X$ and   $\Omega \in \SS_{\textup{top}}(x,y)$;
    \item[(BAM)] $\exists c>0, L\geq 1$ such that ${\rm AM}_1(\partial \Omega, \mm_{x,y}^L) \ge c$ for every $x, y \in \X$ and  $\Omega \in \SS_{\textup{top}}(x,y)$;
    \item[(BAM$^e$)] $\exists c>0, L\geq 1$ such that ${\rm AM}_1(\partial^e \Omega, \mm_{x,y}^L) \ge c$ for every $x, y \in \X$ and  $\Omega \in \SS_{\textup{top}}(x,y)$;
    \item[(BAM$^\pitchfork$)] $\exists c>0, L\geq 1$ such that ${\rm AM}_1^\pitchfork(\Omega, \mm_{x,y}^L) \ge c$ for every $x, y \in \X$ and $\Omega \in \SS_{\textup{top}}(x,y)$.
\end{itemize}
\end{theorem}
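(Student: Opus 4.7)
The plan is to establish the backbone cycle $(\text{PI}) \Rightarrow (\text{BP}_\textup{R}) \Rightarrow (\text{BC}) \Rightarrow (\text{BMC}) \Rightarrow (\text{PI})$ and then to attach each of the remaining conditions to a node of the backbone via the comparison results of Sections~\ref{sec:relations_under_PI}-\ref{sec:relations_weighted_with_riesz}. The chain $(\text{BP}_\textup{R}) \Rightarrow (\text{BC}) \Rightarrow (\text{BMC})$ is a direct application of Lemma~\ref{lemma:perimeter<capacity<minkowski} to the weighted space $(\X,\sfd,\mm_{x,y}^L)$, which is asymptotically doubling by Proposition~\ref{prop:properties_mxy}. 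Once PI has been shown, Corollary~\ref{cor:perimeter_comparison_Poincaré} identifies (BP) with $(\text{BP}_\textup{R})$; Corollary~\ref{cor:codH_comparison_Poincaré} gives $(\text{BH}) \Leftrightarrow (\text{BH}_\textup{R})$ and $(\text{BH}^e) \Leftrightarrow (\text{BH}^e_\textup{R})$; Proposition~\ref{prop:representation_perimeter_ambrosio} together with the two-sided bound $c_1 \le \theta \le C_D$ yields $(\text{BP}) \Leftrightarrow (\text{BH}^e)$; and the trivial inclusion $\partial^e\Omega \subseteq \partial\Omega$ gives $(\text{BH}^e) \Rightarrow (\text{BH})$.

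For $(\text{PI}) \Rightarrow (\text{BP}_\textup{R})$, fix $\Omega \in \SS_\textup{top}(x,y)$ with $B_r(x) \subseteq \Omega$ and $B_r(y) \subseteq \Omega^c$. From the definition of perimeter I would extract Lipschitz approximations $u_n \to \chi_\Omega$ in $L^1(\mm_{x,y}^L)$ with $0 \le u_n \le 1$ and $\int \lip(u_n)\,\d\mm_{x,y}^L \to {\rm Per}_{\mm_{x,y}^L}(\Omega,\X)$. Via a Lipschitz cut-off supported strictly inside $B_r(x) \cup B_r(y)$, modify $u_n$ so that $u_n \equiv 1$ on $B_{r/2}(x)$ and $u_n \equiv 0$ on $B_{r/2}(y)$; since $\chi_\Omega$ equals the required constants on these balls and no perimeter of $\Omega$ is concentrated inside them, the modification preserves the convergence $\int \lip(u_n)\,\d\mm_{x,y}^L \to {\rm Per}_{\mm_{x,y}^L}(\Omega,\X)$. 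Plugging $u_n$ into the Ptp-PI form of Proposition~\ref{prop:Poincaré_equivalences_pencil} and taking the liminf,
\begin{equation*}
1 = u_n(x)-u_n(y) \le C \int \lip(u_n)\,\d\mm_{x,y}^L \longrightarrow C\,{\rm Per}_{\mm_{x,y}^L}(\Omega,\X),
\end{equation*}
gives $(\text{BP}_\textup{R})$ with constant $1/C$.

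For $(\text{BMC}) \Rightarrow (\text{PI})$ I would verify Ptp-PI directly: given $u \in \Lip(\X)$ and $x,y \in \X$ with $u(x) < u(y)$, Lipschitz continuity of $u$ implies $\Omega_t := \{u \le t\} \in \SS_\textup{top}(x,y)$ for every $t \in (u(x), u(y))$ (with radius proportional to $\min(t-u(x), u(y)-t)/\Lip(u)$), so (BMC) gives $(\mm_{x,y}^L)^{+,1}(\Omega_t) \ge c$. Integrating in $t$ and invoking the coarea inequality~\eqref{eq:coareain3_AGD} for the Minkowski content in the weighted space $(\X,\sfd,\mm_{x,y}^L)$,
\begin{equation*}
c\,(u(y)-u(x)) \le \int_{u(x)}^{u(y)} (\mm_{x,y}^L)^{+,1}(\Omega_t)\,\d t \le \int \lip(u)\,\d \mm_{x,y}^L,
\end{equation*}
which is Ptp-PI and hence PI by Proposition~\ref{prop:Poincaré_equivalences_pencil}. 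The same template, with the coarea inequality~\eqref{eq:coareain2_mxyL} for $\codH{1}_{\mm_{x,y}^L}$ and the inclusion $\partial\{u>t\} \subseteq \{u=t\}$ for continuous $u$, produces $(\text{BH}_\textup{R}) \Rightarrow (\text{PI})$, which combined with Corollary~\ref{cor:codH_comparison_Poincaré} also gives $(\text{BH}) \Rightarrow (\text{PI})$.

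The modulus conditions are handled via $(\text{PI}) \Rightarrow (\text{BAM}^\pitchfork)$ using the 1-pencil characterization of Proposition~\ref{prop:Poincaré_equivalences_pencil}: a 1-pencil $\alpha \in \mathcal{P}(\Gamma_{x,y}^L)$ is supported on quasigeodesics joining $x \in \Int(\Omega) \subseteq I_\Omega$ to $y \in \text{ext}(\Omega) \subseteq O_\Omega$ and hence on $\Gamma(\Omega)^\pitchfork$, so that for any $\{\rho_n\} \in \text{Adm-seq}(\Gamma(\Omega)^\pitchfork)$, Fatou's lemma and the pencil estimate give
\begin{equation*}
1 \le \liminf_n \int \!\! \int_\gamma \rho_n\,\d s\,\d\alpha(\gamma) \le C \liminf_n \int \rho_n\,\d\mm_{x,y}^L.
\end{equation*}
The inclusions $\Gamma(\Omega)^\pitchfork \subseteq \Gamma(\partial^e\Omega) \subseteq \Gamma(\partial\Omega)$ (by continuity of curves and the trichotomy $\X = I_\Omega \cup \partial^e\Omega \cup O_\Omega$), together with monotonicity of AM in the curve family, yield $(\text{BAM}^\pitchfork) \Rightarrow (\text{BAM}^e) \Rightarrow (\text{BAM})$; and Corollary~\ref{cor:AM<Hcod_Poincaré} bounds ${\rm AM}_1(\partial\Omega,\mm_{x,y}^L)$ from above by $\codH{1}_{\mm_{x,y}^{2L}}(\partial\Omega)$, closing the loop back to $(\text{BH}_\textup{R})$. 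The main technical obstacle will be the gluing step in the proof of $(\text{PI}) \Rightarrow (\text{BP}_\textup{R})$: one must engineer the Lipschitz cut-off so that the pointwise values $u_n(x) = 1$ and $u_n(y) = 0$ can be imposed without disturbing the convergence $\int \lip(u_n)\,\d\mm_{x,y}^L \to {\rm Per}_{\mm_{x,y}^L}(\Omega,\X)$, which is delicate because the approximating sequence for the perimeter need not respect any pointwise constraints a priori.
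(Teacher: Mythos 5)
Your backbone cycle and the forward implications are fine, but two steps are genuinely broken. First, the claimed inclusion $\Gamma(\Omega)^\pitchfork \subseteq \Gamma(\partial^e\Omega)$ is false: $I_\Omega$ and $O_\Omega$ are merely Borel sets, not open, so a connected curve can pass from a density-one point to a density-zero point without ever meeting $\partial^e\Omega$ (in $\R^2$ take $\Omega$ the complement of an exponentially thin open cusp with tip $p$; then $p\in I_\Omega$, the cusp lies in $O_\Omega$, and the segment entering the cusp through $p$ avoids $\partial^e\Omega$ entirely; this $\Omega$ is even a separating set). Consequently your route to $(\text{BAM}^e)$ collapses. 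This is exactly why the paper's proof of (PI)$\Rightarrow$(BAM$^e$) is long: it reduces to a bounded set of finite perimeter (using Proposition~\ref{prop:finite_perimeter_equivalences} and \cite[Proposition 1.16]{BonPasqRajala2020}) and then invokes \cite[Corollary 6.4]{LahtiShanmu2017}, which says that ${\rm Mod}_1$-a.e.\ curve joining $I_\Omega$ to $O_\Omega$ does meet the essential boundary; some substitute for that input is unavoidable.

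Second, several of your ``$\Rightarrow$(PI)'' directions are circular: Corollaries~\ref{cor:perimeter_comparison_Poincaré}, \ref{cor:AM<Hcod_Poincaré} and \ref{cor:codH_comparison_Poincaré} (and Proposition~\ref{prop:representation_perimeter_ambrosio}) are proved \emph{assuming} a Poincar\'e inequality (they use quasiconvexity to get continuity of the Riesz potential), so they may only be used once PI is known. Thus ``(BAM)$\Rightarrow$(BH$_\text{R}$) via Corollary~\ref{cor:AM<Hcod_Poincaré}'', ``(BH)$\Rightarrow$(PI) via Corollary~\ref{cor:codH_comparison_Poincaré}'', and your treatment of (BP), (BH$^e$) leave those conditions without any PI-free path back to (PI). The paper closes these loops differently: (BP)$\Rightarrow$(PI) and (BH)$\Rightarrow$(PI) follow directly from the coarea inequalities \eqref{eq:coarea_inequality_BV} and \eqref{eq:coareain2} applied with the weight $g=R_{x,y}^L$ against the \emph{unweighted} measure $\mm$, and the modulus conditions are closed through the auxiliary condition (BMC$_0$) using Lemma~\ref{lemma:approximate_modulus_implies_minkowski} together with the fact (via Propositions~\ref{prop:coarea_inequality_hausdorffmeasure} and \ref{prop:comparison_finiteness_Hcod_mm}) that $\mm_{x,y}^L(\{u=t\})=0$ for a.e.\ $t$. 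Finally, your direct gluing argument for (PI)$\Rightarrow$(BP$_\text{R}$) is a genuinely different (and salvageable) route from the paper's, which instead passes through (BH$^e$), the representation formula of Proposition~\ref{prop:representation_perimeter_ambrosio}, and Corollary~\ref{cor:perimeter_comparison_Poincaré}; but as you admit, the cut-off step is only sketched, and one must also reconcile locally Lipschitz competitors with the globally Lipschitz functions required in ($1$-PtPI).
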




With respect to the acronyms in the introduction, we added the superscript $e$ when we deal with the essential boundary and the subscript R when we consider the metric measure space $(\X,\sfd,\mm_{x,y}^L)$, namely with the weighted measure with Riesz potential $\mm_{x,y}^L$.
The scheme in Figure \ref{fig:strategy_proof} outlines the strategy of the proof.\\

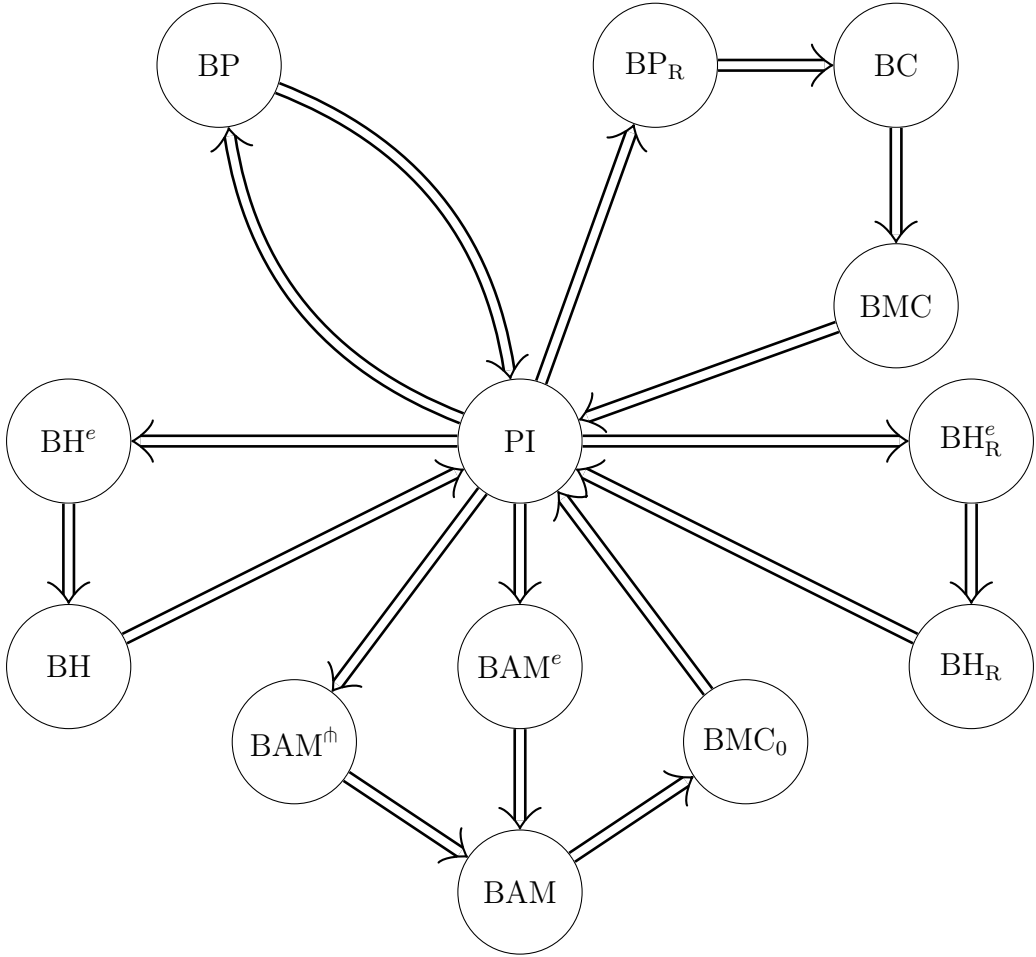
\begin{figure}[h!]
\centering

\tikzset{
    impl/.style = {-{Classical TikZ Rightarrow[length=3mm]}},
}

\begin{tikzpicture}[
node distance = 13mm,
     C/.style = {
                 circle, draw, minimum size=4em, inner sep=2pt},
every edge/.style = {draw, line width=1pt, double distance=3pt},
every edge quotes/.style = {auto, font=\footnotesize, inner sep=1pt, sloped}
                        ]
                        
\node[C] at (0,0) (PI) {$\text{PI}$};
\node[C] at (-4,5) (BP) {$\text{BP}$};
\node[C] at (1.8,5) (BP') {$\text{BP}_\text{R}$};
\node[C] at (5,1.8) (BMC) {$\text{BMC}$};
\node[C] at (5,5) (BC) {$\text{BC}$};
\node[C] at (-6,0) (e-BH) {$\text{BH}^e$};
\node[C] at (-6,-3) (BH) {$\text{BH}$};
\node[C] at (6,0) (e-BH') {$\text{BH}^e_{\text{R}}$};
\node[C] at (6,-3) (BH') {$\text{BH}_{\text{R}}$};
\node[C] at (-3,-4) (BAM') {$\text{BAM}^\pitchfork$};
\node[C] at (0,-3) (e-BAM) {$\text{BAM}^e$};
\node[C] at (3,-4) (BMC0) {$\text{BMC}_0$};
\node[C] at (0,-6) (BAM) {$\text{BAM}$};
\path   (BH) edge[impl]    (PI)
        (BH') edge[impl]    (PI)
        (PI) edge[impl]   (e-BH)
        (PI) edge[impl]   (e-BH')
        (PI) edge[bend left, impl]   (BP)
        (PI) edge[impl]   (BAM')
        (PI) edge[impl]   (e-BAM)
        (PI) edge[impl]   (BP')
        (BP) edge[bend left, impl] (PI)
        (BP') edge[impl]   (BC)
        (BC) edge[impl]   (BMC)
        (BMC) edge[impl]   (PI)
        (BAM') edge[impl]   (BAM)
        (e-BAM) edge[impl]   (BAM)
        (BAM) edge[impl]   (BMC0)
        (BMC0) edge[impl]   (PI)
        (e-BH) edge[impl]  (BH)
        (e-BH') edge[impl]  (BH');
    \end{tikzpicture}
\caption{Logical scheme of the proof of Theorem \ref{theo:main-intro-p=1-riproposed}.}
\label{fig:strategy_proof}
\end{figure}

\noindent The node BMC$_0$ appearing at the bottom right is the following auxiliary property:
\begin{itemize}
    \item[(BMC$_0$)] $\exists c>0, L\geq 1$ such that $(\mm_{x,y}^L)^{+,1}(\partial\Omega) \ge c$ for every $x, y \in \X$ and $\Omega \in \SS_{\textup{top}}(x,y)$ such that $\mm_{x,y}^L(\partial \Omega) = 0$.
\end{itemize}


\begin{proof}
    The first cycle of implications we prove is the bottom one.
    \vspace{2mm}
    
    \noindent {\color{blue}(PI)} $\Rightarrow$ {\color{blue}(BAM$^\pitchfork$)} and {\color{blue}(BAM$^e$)}. Let $x,y\in \X$ and let $\alpha \in \mathcal{P}(\Gamma_{x,y}^L)$ be a $1$-pencil as in Proposition \ref{prop:Poincaré_equivalences_pencil} with constants $C >0, L\geq 1$ depending quantitatively on the structural constants. Let us estimate the approximate $1$-modulus of the family $\Gamma_{x,y}^L$. For every sequence of functions $\rho_j$ such that $\limi_{j\to +\infty} \int_\gamma \rho_j\geq 1$ for every $\gamma \in \Gamma_{x,y}^L$ we have
    \begin{equation*}
        \begin{aligned}
            1 \leq \int \limi_{j\to +\infty}\int_\gamma \rho_j\,\d\alpha \leq \limi_{j\to +\infty} \int \int_\gamma \rho_j \,\d\alpha \leq C \limi_{j\to +\infty} \int_\X \rho_j\,\d\mm_{x,y}^L
        \end{aligned}
    \end{equation*}
    by Fatou's lemma. By the arbitrariness of the sequence $\rho_j$ we have ${\rm AM}_1(\Gamma_{x,y}^L, \mm_{x,y}^L) \geq c$,
    where $c=1/C$. Let us fix now $\Omega \in \SS_{\text{top}}(x,y)$. By definition $\Gamma_{x,y}^L \subseteq \Gamma(\Omega)^\pitchfork$ because $x=\gamma_0 \in I_\Omega$ and $y=\gamma_1\in O_\Omega$ for every $\gamma \in \Gamma_{x,y}^L$. Therefore ${\rm AM}_1^\pitchfork(\Omega, \mm_{x,y}^L) \geq c$. This shows {\color{blue}(BAM$^\pitchfork$)}. \\
    In order to prove {\color{blue}(BAM$^e$)} we fix $x,y\in\X$ and we consider again a $1$-pencil $\alpha \in \mathcal{P}(\Gamma_{x,y}^L)$ with constants $C$ and $L$. Then ${\rm AM}_1(\Gamma_{x,y}^L, \mm_{x,y}^L) \geq c$ as shown before, where $c=1/C$. We choose $L'\geq L$ such that both $B_{2L'\sfd(x,y)}(x)$ and $B_{2L'\sfd(x,y)}(y)$ have finite $\mm$-perimeter and so also ${\rm Per}_\mm(B_{x,y}^{L'}, \X) < \infty$ by Remark \ref{rem:properties_perimeter}.(iii): this is possible by Lemma \ref{lemma:balls_have_finite_perimeter}. We can choose $L'$ arbitrarily close to $L$ and to fix the constants we suppose $L' < L+1$. A straightforward computation shows that ${\rm AM}_1(\Gamma_{x,y}^{L'}, \mm_{x,y}^{L'}) \geq c$ as well. We fix a separating set $\Omega \in \SS_{\text{top}}(x,y)$ and we consider two cases: either ${\rm Per}_\mm(\Omega,B_{x,y}^{L + 1}) < \infty$ or not. If ${\rm Per}_\mm(\Omega,B_{x,y}^{L + 1}) = \infty$ then Proposition \ref{prop:finite_perimeter_equivalences} implies that ${\rm AM}_1(\partial^e\Omega, \mm_{x,y}^{L + 1}) = \infty$, so there is nothing else to prove. Suppose now ${\rm Per}_\mm(\Omega,B_{x,y}^{L + 1}) < \infty$. We claim that ${\rm Per}_\mm(\Omega \cap B_{x,y}^{L'}, \X) < \infty$. For this we argue as follows.     
    By \cite[Proposition 1.16]{BonPasqRajala2020} we have
    $\partial^e(\Omega \cap B_{x,y}^{L'}) \subseteq \partial^e\Omega \cup \partial^e B_{x,y}^{L'}$. Together with the fact that $\partial^e(\Omega \cap B_{x,y}^{L'}) \subseteq \overline{B}_{x,y}^{L'} \subseteq B_{x,y}^{L+1}$ we have  
    \begin{equation}
        \label{eq:partial_boundaries_union}
        \partial^e(\Omega \cap B_{x,y}^{L'}) \subseteq ((\partial^e\Omega)\cap B_{x,y}^{L + 1}) \cup \partial^e B_{x,y}^{L'}.
    \end{equation}
    We remark that in \cite{BonPasqRajala2020} the sets are assumed to have finite perimeter but this assumption is not used in the proof. Therefore 
    $$\codH{1}_\mm(\partial^e(\Omega \cap B_{x,y}^{L'})) \leq \codH{1}_\mm((\partial^e\Omega)\cap B_{x,y}^{L + 1}) + \codH{1}_\mm(\partial^e B_{x,y}^{L'}).$$
    Both terms on the right are finite because of Proposition \ref{prop:finite_perimeter_equivalences} and our assumptions. Hence also the left hand side term is finite, implying that ${\rm Per}_\mm(\Omega\cap B_{x,y}^{L'}, \X) < \infty$, again by Proposition \ref{prop:finite_perimeter_equivalences}.
    So $\Omega \cap B_{x,y}^{L'}$ is a bounded set of finite perimeter in $\X$. By \cite[Corollary 6.4]{LahtiShanmu2017} we have ${\rm Mod}_1(\Gamma(\Omega \cap B_{x,y}^{L'})^\pitchfork \setminus \Gamma(\partial^e(\Omega \cap B_{x,y}^{L'})), \mm) = 0$. Hence ${\rm AM}_1(\Gamma(\Omega \cap B_{x,y}^{L'})^\pitchfork \setminus \Gamma(\partial^e(\Omega \cap B_{x,y}^{L'})), \mm) = 0$. Now we observe that $\Gamma_{x,y}^{L'} \subseteq \Gamma(\Omega \cap B_{x,y}^{L'})^\pitchfork$, because $\gamma_0 = x\in I_{\Omega \cap B_{x,y}^{L'}}$ and $\gamma_1 = y\in O_{\Omega \cap B_{x,y}^{L'}}$ for every $\gamma \in \Gamma_{x,y}^{L'}$. Thus
    \begin{equation}
        \label{eq:AM_pass_boundary}
        {\rm AM}_1(\Gamma_{x,y}^{L'} \setminus \Gamma(\partial^e(\Omega \cap B_{x,y}^{L'})), \mm) = 0
    \end{equation}
    as well. Notice also that every curve $\gamma \in \Gamma_{x,y}^{L'}$ cannot intersect $\partial^e B_{x,y}^{L'} \subseteq \partial B_{x,y}^{L'}$ because $\sfd(x,\gamma_t) \leq \ell(\gamma) \leq L'\sfd(x,y)$ for every $t\in [0,1]$, while every point of $\partial B_{x,y}^{L'}$ is at distance at least $2L'\sfd(x,y)$ from $x$. However, by \eqref{eq:AM_pass_boundary} and \eqref{eq:partial_boundaries_union}, ${\rm AM}_1$-a.e. curve of $\Gamma_{x,y}^{L'}$, computed with respect to $\mm$, has to intersect $((\partial^e\Omega)\cap \overline{B}_{x,y}^{L'}) \cup \partial^e B_{x,y}^{L'}$. This means that ${\rm AM}_1$-a.e. curve of $\Gamma_{x,y}^{L'}$ has to intersect $(\partial^e\Omega) \cap \overline{B}_{x,y}^{L'}$, i.e.
    $${\rm AM}_1(\Gamma_{x,y}^{L'} \setminus \Gamma((\partial^e\Omega) \cap \overline{B}_{x,y}^{L'}), \mm) = 0.$$   
    Since $\min\lbrace \sfd(x,\partial \Omega), \sfd(y,\partial \Omega)\rbrace > 0$ then there exists $M \geq 1$ such that $R_{x,y}^{L'} \leq M$ on a neighbourhood of $(\partial \Omega) \cap \overline{B}_{x,y}^{L'}$. Therefore, by a direct computation and an application of Lemma \ref{lemma:AM_localized_near_boundary}, we also have 
    $${\rm AM}_1(\Gamma_{x,y}^{L'} \setminus \Gamma((\partial^e\Omega) \cap \overline{B}_{x,y}^{L'}), \mm_{x,y}^{L'}) = 0.$$
    Using that ${\rm AM}_1(\cdot, \mm_{x,y}^{L'})$ is an outer measure and is monotone with respect to inclusion of families of paths we get
    $${\rm AM}_1(\Gamma(\partial^e\Omega), \mm_{x,y}^{L+1}) \geq {\rm AM}_1(\Gamma(\partial^e\Omega), \mm_{x,y}^{L'}) \geq {\rm AM}_1(\Gamma_{x,y}^{L'}, \mm_{x,y}^{L'}) \geq c.$$
    This shows {\color{blue}(BAM$^e$)} with constants $L+1$ and $c$.
    \vspace{2mm}
    
    \noindent Each of {\color{blue}(BAM$^\pitchfork$)} and {\color{blue}(BAM$^e$)} $\Rightarrow$ {\color{blue}(BAM)}. This is immediate from the fact that both $\Gamma(\partial^e\Omega)$ and $\Gamma(\Omega)^\pitchfork$ are included in $\Gamma(\partial \Omega)$.
    \vspace{2mm}
    
    \noindent {\color{blue}(BAM)} $\Rightarrow$ {\color{blue}(BMC$_0$)}. This is a direct consequence of Lemma \ref{lemma:approximate_modulus_implies_minkowski}.
    \vspace{2mm}
    
    \noindent {\color{blue}(BMC$_0$)} $\Rightarrow$ {\color{blue}(PI)}. Let $u\colon \X \to \R$ be a bounded Lipschitz function such that $u\geq 0$ and let $x,y\in \X$. We want to prove the pointwise estimate \eqref{eq:Riesz_PtPI}. If $u(x) = u(y)$ there is nothing to prove. Otherwise we can suppose, without loss of generality, that $u(x) < u(y)$.  
    We first apply the coarea inequality for the codimension $1$-Hausdorff measure relative to the measure $\mm_{x,y}^L$ (Proposition \ref{prop:coarea_inequality_hausdorffmeasure}) to get
    $$\int_0^{+\infty} \codH{1}_{\mm_{x,y}^L}(\lbrace u = t\rbrace)\,\d t \leq 2\int_\X \lip u\, \d \mm_{x,y}^L.$$
    Since $\int_\X \lip u \,\d \mm^L_{x,y} \le {\rm Lip}(u) \mm^L_{x,y}(\X) < \infty$, the right hand side is finite, then $\codH{1}_{\mm_{x,y}^L}(\lbrace u = t\rbrace)$ is finite as well for almost every $t$.
    Then $\mm_{x,y}^L(\{u=t\})=0$ for almost every $t$, by Proposition \ref{prop:comparison_finiteness_Hcod_mm}.   
    We focus on the sets $\Omega_t := \lbrace u \geq t\rbrace$ for $u(x) < t < u(y)$. Clearly $\Omega_t \in \SS_{\text{top}}(x,y)$ for every such $t$. Moreover $\partial \Omega_t \subseteq \lbrace u = t \rbrace$, so it has $\mm_{x,y}^L$-measure zero for almost every $t$. In particular for almost every $t\in (u(x),u(y))$ we can apply the assumption {\color{blue} (BMC$_0$)} to get $(\mm_{x,y}^L)^{+,1}(\partial \Omega_t) \geq c$. We are now in position to apply the coarea inequality for the Minkowski content as stated in \eqref{eq:coareain3_KL}  to get
    $$c\,\vert u(x) - u(y) \vert \leq \int_{u(x)}^{u(y)} (\mm_{x,y}^L)^{+,1}(\partial \Omega_t)\,\d t \leq 2\int_\X \lip u\,\d\mm_{x,y}^L.$$
    The pointwise estimate follows with $C = 2/c$, for Lipschitz, bounded, nonnegative functions. By translation invariance we immediately have the same inequality for all bounded, Lipschitz functions. Let ${u \in \rm Lip}(\X)$. Let $u_n = \max\{\min\{u,n\},-n\}$. Notice that $u_n \in {\rm Lip}(\X)$ and it is bounded. We have that $|u_n(x)-u_n(y)|$ converges to $|u(x)-u(y)|$ and $\lip u_n \le \lip u$. Thus, applying the estimate to $u_n$ and taking the limit we conclude.
    \vspace{2mm}

    \noindent We now move to the right cycle of implications.
    \vspace{2mm}

    \noindent {\color{blue}(PI)} $\Rightarrow$ {\color{blue} (BH$^e_\text{R}$)} $\Rightarrow$ {\color{blue} (BH$_\text{R}$)} $\Rightarrow$ {\color{blue} (PI)}. We have already seen that {\color{blue}(PI)} implies {\color{blue} (BAM$^e$)}. Therefore Corollary \ref{cor:AM<Hcod_Poincaré} gives directly {\color{blue} (BH$^e_\text{R}$)}. It is also obvious that {\color{blue} (BH$^e_\text{R}$)} implies {\color{blue} (BH$_\text{R}$)}. We now suppose {\color{blue} (BH$_\text{R}$)} holds, we take a Lipschitz function $u\colon \X \to \R$, $u\geq 0$ and two points $x,y\in \X$. We want to prove the pointwise estimate \eqref{eq:Riesz_PtPI}. If $u(x) = u(y)$ there is nothing to prove, otherwise once again we can suppose without loss of generality that $u(x) < u(y)$. The sets $\Omega_t := \lbrace u \geq t \rbrace$ belong to $\SS_\text{top}(x,y)$ for all $t\in (u(x),u(y))$. Moreover $\partial \Omega_t \subseteq \lbrace u = t \rbrace$. So we can apply the coarea inequality \eqref{eq:coareain2} of Proposition \ref{prop:coarea_inequality_hausdorffmeasure} with respect to the measure $\mm_{x,y}^L$ to get
    $$c\,\vert u(x) - u(y) \vert \leq \int_{u(x)}^{u(y)} \codH{1}_{\mm_{x,y}^L}(\lbrace u = t \rbrace) \,\d t \leq 2\int_\X \lip u \,\d\mm_{x,y}^L.$$
    Once again we can use this coarea inequality because of Proposition \ref{prop:properties_mxy}.
    Therefore the pointwise estimate follows with $C = 2/c$ for Lipschitz, nonnegative functions. Arguing as before we get the same estimate for all Lipschitz functions.
    \vspace{2mm}

    \noindent The next step is the left cycle of implications.
    \vspace{2mm}

    \noindent {\color{blue}(PI)} $\Rightarrow$ {\color{blue} (BH$^e$)} $\Rightarrow$ {\color{blue} (BH)} $\Rightarrow$ {\color{blue} (PI)}. We already know that {\color{blue}(PI)} implies {\color{blue} (BH$^e_\text{R}$)}. We immediately deduce {\color{blue} (BH$^e$)} by Corollary \ref{cor:codH_comparison_Poincaré}. This gives {\color{blue} (BH)}. It remains to prove {\color{blue}(BH)} $\Rightarrow$ {\color{blue}(PI)}.
    We take a Lipschitz function $u\colon \X \to \R$, $u\geq 0$ and two points $x,y\in \X$. We want to prove the pointwise estimate \eqref{eq:Riesz_PtPI}. We apply the coarea inequality \eqref{eq:coareain2} of Proposition \ref{prop:coarea_inequality_hausdorffmeasure} with respect to the measure $\mm$ and Borel function $R_{x,y}^L$ to get
    $$c\,\vert u(x) - u(y) \vert \leq \int_{u(x)}^{u(y)} \int_{\lbrace u = t \rbrace} R_{x,y}^L\,\d \codH{1}_{\mm} \,\d t \leq 2\int_\X R_{x,y}^L\,\lip u \,\d\mm.$$
    \vspace{2mm}

    \noindent We now go to {\color{blue}(PI)} $\Leftrightarrow$ {\color{blue}(BP)}.
    Suppose {\color{blue} (PI)} holds. We already know that {\color{blue} (BH$^e$)} holds. We fix $x,y\in\X$ and $\Omega \in \SS_{\text{top}}(x,y)$. If ${\rm Per}_\mm(\Omega,B_{x,y}^L) = \infty$ there is nothing to prove. 
    So we can restrict to the case ${\rm Per}(\Omega,B_{x,y}^L)< \infty$. By Proposition \ref{prop:representation_perimeter_ambrosio} we know that ${\rm Per}_\mm(\Omega, \cdot) = \theta \codH{1}_\mm\restr{\partial^e\Omega}(\cdot)$ as measures on $B_{x,y}^L$, with $c_1\leq \theta \leq C_D$ and $c_1$ depends only on the structural constants. Therefore
    \begin{equation*}
        \begin{aligned}
            \int_\X R_{x,y}^L \,\d{\rm Per}_\mm(\Omega, \cdot) = \int_{B_{x,y}^L} R_{x,y}^L \,\d{\rm Per}_\mm(\Omega, \cdot) &\geq c_1\int_{\partial^e\Omega \cap B_{x,y}^L} R_{x,y}^L \,\d\codH{1}_\mm \\
            &= c_1\int_{\partial^e\Omega} R_{x,y}^L \,\d\codH{1}_\mm \geq c_1c.
        \end{aligned}
    \end{equation*}
    This shows {\color{blue} (BP)} with constants $c_1c$ and $L$.\\ 
    Suppose now {\color{blue}(BP)}  holds. We want to prove the pointwise estimate \eqref{eq:Riesz_PtPI}.
    Let $x,y \in \X$ and $u \in {\rm Lip}(\X)$. Assume without loss of generality that $u(x) < u(y)$. Consider $u(x)< t < u(y)$ and $\Omega_t: =\{ u \le t\} \in {\rm SS}_{{\rm top}}(x,y)$. Then
    \begin{equation*}
    c \le \int R_{x,y}^L\, \d {\rm Per}_\mm(\{ u \le t \},\cdot)=\int R_{x,y}^L\, \d {\rm Per}_\mm(\{ u > t \},\cdot).
    \end{equation*}
    where the last equality follows by Remark \ref{rem:properties_perimeter}.(iii).
    Hence, using \eqref{eq:coarea_inequality_BV},
    \begin{equation*}
    c\,|u(x)-u(y)| \le \int_{u(x)}^{u(y)} \int R_{x,y}^L\, \d {\rm Per}_\mm(\{ u > t \},\cdot) \,\d t \le 2 \int R_{x,y}^L\,\lip u\,\d \mm.
    \end{equation*}
    \vspace{2mm}

    \noindent The remaining cycle of implications is {\color{blue}(PI)} $\Rightarrow$ {\color{blue} (BP$_\text{R}$)} $\Rightarrow$ {\color{blue} (BC)} $\Rightarrow$ {\color{blue} (BMC)} $\Rightarrow$ {\color{blue} (PI)}. \\
    Suppose {\color{blue}(PI)} holds. We already know that {\color{blue}(BP)} holds with constants $c>0$, $L\geq 1$.
    Let $x,y \in \X$ and $\Omega \in {\rm SS}_{\rm top}(x,y)$. We apply Corollary \ref{cor:perimeter_comparison_Poincaré} to get
    \begin{equation*}
        c \le \int R_{x,y}^L \d {\rm Per}_\mm(\Omega, \cdot) \le C'\,{\rm Per}_{\mm_{x,y}^L}(\Omega, B_{x,y}^L)\le C'\,{\rm Per}_{\mm_{x,y}^L}(\Omega, \X).
    \end{equation*}
    Therefore {\color{blue} (BP$_\text{R}$)} holds with constants $c/C'$ and $L$.\\    
    The implications {\color{blue} (BP$_\text{R}$)} $\Rightarrow$ {\color{blue} (BC)} $\Rightarrow$ {\color{blue} (BMC)} follow directly from Lemma \ref{lemma:perimeter<capacity<minkowski}, because the separating sets are closed and their intersection with $\overline{B}_{x,y}^L$ is compact. To be precise, in the application of Lemma \ref{lemma:perimeter<capacity<minkowski} we use the fact that the perimeter, the capacity and the Minkowski content of a set $A$ computed in the metric measure space $(\X,\sfd,\mm_{x,y}^L)$ are equal to the perimeter, the capacity and the Minkowski content of the set $A \cap {\rm supp}\,\mm_{x,y}^L=A \cap \overline{B}_{x,y}^L$ computed in the same metric measure space.\\ 
    Now assume {\color{blue} (BMC)} holds and we prove {\color{blue} (PI)}. Let $u\colon \X \to \R$, $u\geq 0$ be a bounded Lipschitz function and let $x,y\in \X$. We want to prove the pointwise estimate \eqref{eq:Riesz_PtPI}. We can assume as usual that $u(x) < u(y)$. The sets $\Omega_t := \lbrace u \geq t \rbrace$ belong to $\SS_\text{top}(x,y)$ for all $t\in (u(x),u(y))$. So we can apply the coarea inequality \eqref{eq:coareain3_AGD} of Proposition \ref{prop:coarea_inequality_minkowski} with respect to the measure $\mm_{x,y}^L$ to get
    $$c\, \vert u(x) - u(y) \vert \leq \int_{u(x)}^{u(y)} (\mm_{x,y}^L)^{+,1}(\lbrace u \geq t \rbrace) \,\d t \leq \int_\X \lip u \,\d\mm_{x,y}^L.$$
    Therefore the pointwise estimate follows with $C = 1/c$ for Lipschitz, nonnegative, bounded functions. As before we get the same estimate for all Lipschitz functions.
\end{proof}
\begin{remark}[Comparison with Keith's characterizations]
\label{rem:keith_similarity}
In the proof of the implication (PI) implies (BAM$^\pitchfork$), we prove that (PI) implies the following condition:
there exists $c>0$ and $L \ge 1$ such that for every $x,y \in \X$ we have $${\rm AM}_{1}(\Gamma_{x,y}^L,\mm_{x,y}^L) \ge c.$$
This condition in the proof implies (BAM$^\pitchfork$) which implies (PI).
Thus the above condition is equivalent to (PI) and share a close similarity with Keith's characterizations in \cite{Kei03}. Indeed, Keith proved that, given $p \ge 1$, a doubling metric measure space satisfies a $p$-Poincar\'{e} inequality if and only if 
$$ {\rm Mod}_p(\Gamma_{x,y}^L,\mm_{x,y}^L) \ge c.$$
Looking at this condition for $p=1$, we have that our condition is apriori weaker than Keith's one, but it turns out to be equivalent.
\end{remark}
We remark that the same scheme of proof shows the following property, holding for every space satisfying a $p$-Poincaré inequality.
\begin{proposition}
\label{prop:PI_implies_pBH}
    Let $(\X,\sfd,\mm)$ be a $p$-\textup{PI} space, $p\geq 1$. Then there exist $c > 0$, $L\geq 1$, depending only on $(C_D,C_P,\lambda)$, such that for every $x,y\in\X$ and every $\Omega \in \SS_{\textup{top}}(x,y)$ it holds
    $$\int_{\partial \Omega} R_{x,y}^L \,\d\codH{p}_\mm \geq c.$$
    In particular $\codH{p}_\mm(\partial \Omega) > 0$.
\end{proposition}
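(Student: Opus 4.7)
The plan is to lift the chain (PI) $\Rightarrow$ (BAM$^e$) $\Rightarrow$ (BH$^e_\text{R}$) $\Rightarrow$ (BH$^e$) used in the proof of Theorem \ref{theo:main-intro-p=1-riproposed} from $p=1$ to general $p\geq 1$. The three key ingredients—the $p$-pencil of Proposition \ref{prop:Poincaré_equivalences_pencil}, the estimate ${\rm AM}_p(\cdot,\mm_{x,y}^L)\leq C\codH{p}_{\mm_{x,y}^{2L}}(\cdot)$ of Corollary \ref{cor:AM<Hcod_Poincaré}, and the comparison between $\codH{p}_{\mm_{x,y}^L}$ and $\int R_{x,y}^L\,\d\codH{p}_\mm$ of Corollary \ref{cor:codH_comparison_Poincaré}—are already stated for arbitrary $p\geq 1$, so the whole chain transfers with no conceptual change.

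Fix $x,y \in \X$ and $\Omega \in \SS_{\text{top}}(x,y)$. First I would replicate the topological step from the proof of (PI) $\Rightarrow$ (BAM$^e$): since $\Omega$ is closed with $x \in \Int(\Omega)$ and $y \in \Omega^c$, the standard argument applied to $t^\star := \sup\{t : \gamma_t \in \Omega\}$ shows that every $\gamma \in \Gamma_{x,y}^L$ meets $\partial \Omega$, and hence $\Gamma_{x,y}^L \subseteq \Gamma(\partial \Omega)$. Let $\alpha \in \mathcal{P}(\Gamma_{x,y}^L)$ be the $p$-pencil. Given any density $\rho$ admissible for $\Gamma(\partial \Omega)$, one has $\int_\gamma \rho\,\d s \ge 1$ for $\alpha$-a.e. $\gamma$, and the pencil inequality gives $1 \leq C\sfd(x,y)^{p-1}\int \rho^p\,\d\mm_{x,y}^L$. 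Passing to the infimum produces a lower bound for ${\rm Mod}_p(\Gamma(\partial \Omega), \mm_{x,y}^L)$; since ${\rm Mod}_p = {\rm AM}_p$ for $p>1$, this translates to the corresponding lower bound on ${\rm AM}_p(\partial \Omega, \mm_{x,y}^L)$. The case $p=1$ is directly (BH) of Theorem \ref{theo:main-intro-p=1-riproposed}, so there is nothing extra to prove.

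Finally, Corollary \ref{cor:AM<Hcod_Poincaré} transfers the AM lower bound to a lower bound on $\codH{p}_{\mm_{x,y}^{2L}}(\partial \Omega)$, and Corollary \ref{cor:codH_comparison_Poincaré} then converts the latter into the desired bound on $\int_{\partial \Omega}R_{x,y}^{2L}\,\d\codH{p}_\mm$, yielding the statement with $L' = 2L$. The main technical point, and the only place where the argument differs substantively from $p=1$, is the $\sfd(x,y)^{p-1}$ factor produced by the pencil when $p>1$: for $p=1$ it is trivial, whereas for $p>1$ its propagation through the two corollaries has to be tracked carefully to guarantee that the resulting constant $c$ depends only on the structural constants $(C_D,C_P,\lambda)$ and not on the relative distance of the two poles.
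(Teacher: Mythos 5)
Your route is the same as the paper's: the paper also feeds the $p$-pencil of Proposition \ref{prop:Poincaré_equivalences_pencil} into the admissibility argument of (PI) $\Rightarrow$ (BAM$^\pitchfork$) to lower-bound an approximate modulus of curves meeting $\partial\Omega$ (it uses ${\rm AM}_p^\pitchfork(\Omega,\mm_{x,y}^L)\le{\rm AM}_p(\partial\Omega,\mm_{x,y}^L)$, you use $\Gamma_{x,y}^L\subseteq\Gamma(\partial\Omega)$ together with ${\rm Mod}_p={\rm AM}_p$ for $p>1$; this difference is immaterial), and then concludes via Corollary \ref{cor:AM<Hcod_Poincaré} and Corollary \ref{cor:codH_comparison_Poincaré}. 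The topological crossing step and the reduction of the case $p=1$ to (BH) of Theorem \ref{theo:main-intro-p=1-riproposed} are fine.

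The problem is exactly the point you flag in your last paragraph and then leave unresolved, and your proposed mechanism for resolving it cannot work. For $p>1$ the pencil inequality carries the factor $\sfd(x,y)^{p-1}$, so your admissibility computation gives only ${\rm Mod}_p(\Gamma(\partial\Omega),\mm_{x,y}^L)\ge C^{-1}\sfd(x,y)^{1-p}$. The constants in Corollaries \ref{cor:AM<Hcod_Poincaré} and \ref{cor:codH_comparison_Poincaré} depend only on $(C_D,C_P,\lambda)$ and not on the poles, so ``tracking the factor through the two corollaries'' changes nothing: the chain as you set it up proves $\int_{\partial\Omega}R_{x,y}^{2L}\,\d\codH{p}_\mm\ge c\,\sfd(x,y)^{1-p}$, which gives the qualitative conclusion $\codH{p}_\mm(\partial\Omega)>0$ but not the uniform constant claimed in the statement. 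Moreover, no bookkeeping of constants can upgrade this: rescaling the metric by $t>0$ leaves the structural constants and the class $\SS_{\textup{top}}(x,y)$ unchanged while multiplying $\int_{\partial\Omega}R_{x,y}^{L}\,\d\codH{p}_\mm$ by $t^{1-p}$, so the removal of the distance factor requires a genuine additional argument at the modulus stage, not a careful propagation. For comparison, the paper's own proof is laconic here: it asserts that the pencil computation yields ${\rm AM}_p^\pitchfork(\Omega,\mm_{x,y}^L)\ge c$ with $c$ independent of $x,y$, i.e.\ it does not record the factor at all; in any case your proposal, as written, does not supply the missing step, and that step is where the whole content of the uniform bound lies.
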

\begin{proof}
    The same proof of {\color{blue} (PI)} implies {\color{blue} (BAM$^\pitchfork$)}, where we use a $p$-pencil instead of a $1$-pencil, shows that we can find $c>0$, $L\geq 1$ depending only on the data such that 
    $${\rm AM}_p^\pitchfork(\Omega, \mm_{x,y}^L) \geq c$$
    for every $\Omega \in \SS_{\text{top}}(x,y)$. Therefore we also have 
    $${\rm AM}_p(\partial\Omega, \mm_{x,y}^L) \geq c.$$
    Corollary \ref{cor:AM<Hcod_Poincaré}, together with Corollary \ref{cor:codH_comparison_Poincaré} gives the thesis. The last part of the statement is obvious.
\end{proof}

The converse is false in full generality for $p>1$ as the next example shows.
\begin{example}
\label{example:sierpinski_p>1}
    Let $(\X,\sfd,\mm)$ be the standard Sierpinski carpet. It is an $s$-Ahlfors regular metric measure space for $s=\frac{\log 8}{\log 3}$. It does not satisfy a $p$-Poincaré inequality for every $p\geq 1$. Since the space is connected then every separating set has no empty boundary. Therefore for every $s' > s$ we have $\codH{s'}(\partial \Omega) \geq  C_{\text{AR}}^{-1}\,\mathcal{H}^{s-s'}(\partial \Omega) = +\infty$, where we used Remark \ref{remark:hcod-p_properties} and the fact that $\partial \Omega$ is not empty. In particular $\int_{\partial \Omega} R_{x,y}^L\,\d\codH{s'}_\mm = +\infty$ for every $\Omega \in \SS_{\textup{top}}(x,y)$ and for every $x,y\in \X$.
\end{example}

Let us comment with some examples for which one of the equivalent properties that are considered in Theorem \ref{theo:main-intro-p=1} can be used to give necessary and sometimes sufficient conditions for the validity of a $1$-Poincaré inequality.

\begin{example}[$1$-Ahlfors regular spaces]
    Let $(\X,\sfd,\mm)$ be a $1$-Ahlfors regular space. By Remark \ref{remark:hcod-p_properties} there exists a constant $C$ depending only on the Ahlfors regularity constant $C_{\textup{AR}}$ such that $C^{-1}\codH{1} \leq \mathcal{H}^0 \leq C \codH{1}$. The following properties are quantitatively equivalent:
    \begin{itemize}
        \item[(i)] $\exists L \geq 1$ such that $\X$ is $L$-quasiconvex;
        \item[(ii)] $\exists L \geq 1$ such that $(\X,\sfd)$ is $L$-biLipschitz equivalent to a geodesic space;
        \item[(iii)] $(\X,\sfd,\mm)$ satisfies a $1$-Poincaré inequality.
    \end{itemize}
    
    \noindent We only need to verify that (i) implies (iii). To do so we verify the (BH) property. Let us consider $x,y\in\X$ and $\Omega \in {\rm SS}_{\rm top}(x,y)$.
    Let $\gamma$ be a $L$-quasigeodesic connecting $x$ to $y$, so that $\gamma \subseteq B_{x,y}^{L}$. Moreover $\gamma \cap \partial \Omega \neq \emptyset$, thus $\gamma \cap (\partial \Omega \cap B_{x,y}^{L}) \neq \emptyset$. By Ahlfors-regularity we have $C_{\textup{AR}}^{-1} \chi_{B_{x,y}^{L}} \le  R^L_{x,y}\chi_{B_{x,y}^{L}} \le C_{\textup{AR}} \chi_{B_{x,y}^{L}}$.
    Thus,
    \begin{equation*}
        \int_{\partial \Omega} R^{L}_{x,y} \d \codH{1} \ge C_{\textup{AR}}^{-1}\, \mathcal{H}^0(B_{x,y}^{L} \cap \partial \Omega) \ge C_{\textup{AR}}^{-1}.
    \end{equation*}
    Examples of $1$-Ahlfors regular spaces include graphs with an upper bound on the valency of the vertices and a lower bound on the length of the edges. Similar kind of simplicial complexes have been studied for instance in \cite{CavSamb22}. A general simplicial complex, even with bounded geometry, does not satisfy in general a $1$-Poincaré inequality.    
    The explicit computation of the best possible Poincaré inequality for simplicial complexes of bounded geometry can be done. One of the motivations for our study was the computation of the best possible Poincaré inequality for geodesically complete metric spaces with curvature bounded above. At the moment this problem is still unsolved.
\end{example}

\begin{example}[Necessary condition in the Ahlfors regular case]
Let $(\X,\sfd,\mm)$ be a $1$-PI space, thus (BH) holds. Then let $x\in \X$ and $\Omega=\overline{B}(x,r)$. For every $y$ such that $\sfd(x,y)>r$ we have that $\Omega \in {\rm SS}_{\rm top}(x,y)$.
For such a given $y$, $\partial \Omega \subseteq B_{x,y}^L$ and (BH) implies $\int_{\partial \Omega} R_{x,y}\,\d \codH{1} \ge C$. Thus
\begin{equation*}
\begin{aligned}
    C & \le \int_{\partial B_r(x)} R_{x,y}\,\d \codH{1} = \int_{\partial B_r(x)} \frac{\sfd(x,z)}{\mm(B_{\sfd(x,z)}(x))}\,\d \codH{1} + \int_{\partial B_r(x)} \frac{\sfd(y,z)}{\mm(B_{\sfd(y,z)}(y))}\,\d \codH{1}\\
    &\le \frac{r}{\mm(B_r(x))}\,\codH{1}(\partial B_r(x)) + \int_{\partial B_r(x)} \frac{\sfd(x,y)+r}{\mm(B_{\sfd(x,y)-r}(y))}\,\d \codH{1}\\
\end{aligned}
\end{equation*}
We further assume now that $\X$ is $s$-Alhfors regular for $s \ge 1$ with constant $C_{\text{AR}} >0$. Then there exists $C_1=C_1(C,C_{\text{AR}})>0$ such that
\begin{equation}
\label{eq:estimate_size_balls}
    C_1 \le r^{1-s}\,\mathcal{H}^{s-1}(\partial B_r(x)) + \frac{\sfd(x,y)+r}{(\sfd(x,y)-r)^{s}}\,\mathcal{H}^{s-1}(\partial B_r(x)).
\end{equation}
Suppose moreover that $(\X,\sfd)$ is not compact and choose $y_n$ such that $\sfd(x,y_n) \to \infty$. Then we get 
\begin{equation*}
    \mathcal{H}^{s-1}(\partial B_r(x)) \ge C_1 \,r^{s-1}\qquad \text{for every }r > 0 \text{ and every } x\in \X.
\end{equation*}
This is a necessary condition for the validity of the $1$-Poincaré inequality of a Ahlfors regular, non compact metric measure space.\\
More generally, if $(\X,\sfd,\mm)$ is Ahlfors regular and $1$-PI, but not necessarely non-compact, we get an infinitesimal condition. Indeed, by taking the limit as $r \to 0$ in \eqref{eq:estimate_size_balls}, we have that for every $x \in \X$
\begin{equation*}
    \limi_{r \to 0} \frac{\mathcal{H}^{s-1}(\partial B_r(x))}{r^{s-1}} \ge C_2,
\end{equation*}
with $C_2$ being equal to $C_1$ if $s>1$ and $C_2=C_1/2$ if $s=1$. The case $s<1$ is excluded because $\X$ has to be quasigeodesic.
\end{example}
\bibliographystyle{alpha}
\bibliography{biblio.bib}
\end{document}